\renewcommand\normalsize{%
    \@setfontsize\normalsize{11.7}{14pt plus .3pt minus .3pt}%
    \abovedisplayskip 10\p@ \@plus4\p@ \@minus4\p@
    \abovedisplayshortskip 6\p@ \@plus2\p@
    \belowdisplayshortskip 6\p@ \@plus2\p@
    \belowdisplayskip \abovedisplayskip}
\renewcommand\small{%
    \@setfontsize\small{9.5}{12\p@ plus .2\p@ minus .2\p@}%
    \abovedisplayskip 8.5\p@ \@plus4\p@ \@minus1\p@
    \belowdisplayskip \abovedisplayskip
    \abovedisplayshortskip \abovedisplayskip
    \belowdisplayshortskip \abovedisplayskip}
\renewcommand\footnotesize{%
    \@setfontsize\footnotesize{8.5}{9.25\p@ plus .1pt minus .1pt}%%
    \abovedisplayskip 6\p@ \@plus4\p@ \@minus1\p@
    \belowdisplayskip \abovedisplayskip
    \abovedisplayshortskip \abovedisplayskip
    \belowdisplayshortskip \abovedisplayskip}
\numberwithin{equation}{section}
\newtheorem{theorem}{Theorem}[section]
\newtheorem{proposition}[theorem]{Proposition}
\newtheorem{lemma}[theorem]{Lemma}
\newtheorem{defi}[theorem]{Definition}
\newtheorem{corollary}[theorem]{Corollary}
\newtheorem{example}[theorem]{Example}
\def\eps{\varepsilon}
\def\curl{\text{curl}\,}
\def\eps{\varepsilon}
\def\bega{\begin{aligned}}
\def\enda{\end{aligned}}
\def\R{\mathbb{R}^2}
\def\R{\mathbb{R}}
\def\bcase{\begin{cases}}
\def\ecase{\end{cases}}
\def\bmx{\begin{bmatrix}}
\def\emx{\end{bmatrix}}
\DeclareMathOperator{\sech}{sech}
\newcommand{\beq}{\begin{equation}}
\newcommand{\eeq}{\end{equation}}
\newcommand{\ben}{\begin{eqnarray}}
\newcommand{\een}{\end{eqnarray}}
\newcommand{\beno}{\begin{eqnarray*}}
\newcommand{\eeno}{\end{eqnarray*}}
\providecommand{\R}{\mathbb{R}}
\providecommand{\N}{\mathbb{N}}
\renewcommand{\div}{\mathrm{div} \,}
\newcommand{\Id}{\operatorname{Id}}
\providecommand{\rot}{\mathrm{rot} \,}
 \renewcommand{\Im}{\operatorname{im}}
\address{  F.~Sueur,  Institut de Math\'ematiques de Bordeaux, UMR CNRS 5251,Universit\'e de Bordeaux, 351~cours
de la Lib\'eration, F33405 Talence Cedex, France}
\date{\today}
\title[Differential transmutations]{ Differential transmutations}
\author{ Franck Sueur}
\begin{document}

\begin{abstract}
Inspired by Gromov's partial differential relations, we introduce a notion of differential transmutation, which allows to transfer some local properties of solutions of a PDE to solutions of another PDE, in particular local solvability, hypoellipticity,  weak and strong unique continuation properties and the Runge property. 
 The latest refers to the possibility to approximate some given local solutions by a global solution, with 
 point force controls 
in preassigned positions in the holes of the space domain. 
 As examples we prove that $2$D Lamé-Navier system and the $3$D steady Stokes system, can be obtained as  differential transmutations of appropriate tensorizations of the Laplace operator. 
\end{abstract}

\maketitle
\setcounter{tocdepth}{3}
\tableofcontents

\section{Introduction}

We introduce a new method to transfer some  properties from a linear operator $B$ to another linear operator $A$.
On the opposite to the classical notion of conjugation, our recipe uses two steps or more.
These steps involve two types of ingredients: 
 the commutativity of some diagrams involving $A$ and $B$ and some B\'ezout-type identities involving  $A$ or $B$.
These identities require a finite 
family of auxiliary operators which
 we call  the  philosopher's stone of the
 transmutation of  $B$ into $A$.
 Actually several kinds of transmutations are considered, involving from $2$ to $6$ equations, while using between $4$ and $6$ auxiliary operators. 
  We first present the main features of this approach in a general setting, in Section \ref{sec-ab}. 
 There, $A$ and $B$ are merely linear mappings, as well as the  auxiliary operators. 
 We investigate in particular the equivalence class properties of some types of transmutations, 
   the degrees of freedom in the philosopher's stone, the stability under the usual 
    algebraic operations including some abstract adjoint operations. 
\smallskip
 
 Then,  in Section \ref{sec-dt}, we investigate the case where  $A$  and $B$ are linear differential operators, with the extra constraint that we  look for auxiliary operators which also are  linear differential operators, all of these operators having smooth variable coefficients. 
  This raises the notion of  differential transmutation for which  the numerical advantage of the number of  auxiliary operators over the number of equations to satisfy  echoes one  motto from Gromov's theory of partial differential relations, see \cite{Gromov}, which, in the linear setting, is that an undetermined  linear differential  operator may have a right-inverse which also is a linear differential operator. 
The gain of looking for auxiliary operators which are all differential, rather than considering some possible integral inverses, is the preservation of local properties. 
Indeed, we prove that appropriate differential transmutations allow to  transfer  local solvability, hypoellipticity, unique continuation properties and Runge's approximation type properties. The latest refers to the possibility to approximate some given local solutions of a partial differential equation by a global solution, with point force controls 
in preassigned positions in the holes of the space domain. 
\smallskip

 As an illustration of our approach we consider in  Section \ref{sec-2D} and in Section \ref{sec-3D}, respectively in $2$D  and in $3$D,  the case of the steady Stokes operator, which is usually thought as a composition of  the Leray-Helmholtz projection, which is a non-local operator, and of the Laplace operator. 
 On the opposite our approach  links the  steady Stokes operator to the Laplace operator by a  differential transmutation, that is by  the means of local, differential, operators. 
 This allows us to recover an earlier result by Fabre and Lebeau on the weak unique continuation of  the  steady Stokes operator and to improve a result by 
  Glass and Horsin on the global approximation with point force controls.
Some of our results actually also encompass the Lamé-Navier operator which appears in elasticity.

%%%%%%%%%%%%%%%%%%%%
\section{Abstract transmutations}
\label{sec-ab}

\subsection{Definitions and  first properties}

For two given vector spaces $\mathcal E$ and $\mathcal F$, we denote $L(\mathcal E ; \mathcal F)$ the space of the  linear mappings from  $\mathcal E$ to $\mathcal F$.
 Let $\mathcal E$, $\mathcal F$, $\mathcal G$ and $\mathcal H$ some non-trivial vector spaces. 
   Let  $A$, $ B$, $P$, $Q$, $R$, $S$, $T$ some linear mappings between some pairs of those spaces according to the diagrams below. 

\begin{align*}
   \xymatrix{
 \mathcal E    \ar@<2pt>[r]^A  \ar@<2pt>[d]_Q& \mathcal F  \ar@<2pt>[d]_S \\
   \mathcal G   \ar@<2pt>[r]^B   &  \mathcal H  
   } 
 & \quad   \quad \quad 
   \xymatrix{
 \mathcal E    \ar@<2pt>[r]^A  & \mathcal F   \\
   \mathcal G   \ar@<2pt>[r]^B  \ar@<2pt>[u]_P  &  \mathcal H  \ar@<2pt>[u]_R
  }  
&\quad    \quad   \quad 
   \xymatrix{
 \mathcal E    \ar@<2pt>[r]^A  \ar@<2pt>[d]_P&  \mathcal F      \ar@<2pt>[l]^T \\   
   \mathcal G     \ar@<2pt>[u]_Q &  
  }  
  &\quad  \quad  \quad   
   \xymatrix{
 \mathcal E    \ar@<2pt>[r]^A  &   \mathcal F     \ar@<2pt>[l]^T   \ar@<2pt>[d]_R \\
 &  \mathcal H  \ar@<2pt>[u]_S }
  \\   \text{Pre-catalysis } &\quad \quad \text{Post-catalysis } &\quad  \quad   \text{Pre-regeneration } &\quad  \text{Post-regeneration }
    \\  BQ  = S A &\quad \quad  AP = R  B &\quad P Q  +TA  = \Id_\mathcal E  &\quad  RS + AT  = \Id_{  \mathcal F}
  \end{align*}

Our first definition below  concerns the commutativity of the two first diagrams above while the condition at  stake in the third and fourth diagrams 
is a B\'ezout-type identity.

 \begin{defi} \label{dispatched} 
 We say
  that  $A$ is a pre-catalysis  of $B$ by $(Q,S)$ if $BQ  = S A$;
  that $A$ is a post-catalysis of $B$ by $(P,R)$ if $AP = R  B$; 
  that $A$ is weakly pre-regenerated by $(P,Q,T)$ if  $\Im (P Q  +TA  - \Id_\mathcal E) \subset \ker A$; 
that $A$ is pre-regenerated by $(P,Q,T)$ if $P Q  +TA  = \Id_\mathcal E$, 
that $A$ is weakly post-regenerated by $(R,S,T)$ if $\Im A \subset \ker  (RS + AT  - \Id_{  \mathcal F})$; 
that $A$ is post-regenerated by $(R,S,T)$ if $RS + AT  = \Id_{  \mathcal F}$.
\end{defi}
The following definition introduces a whole line of   notions of transmutation.
 \begin{defi} 
We say 
that $A$ is a  basic   transmutation of  $B$ by  $(P,Q,R,S)$ if $A$ is a post-catalysis  of $B$ by $(P,R)$ and   a pre-catalysis  of $B$ by $(Q,S)$; 
    that $A$ is a post-transmutation of  $B$ by philosophers' stone $(P,R,S,T)$ if $A$ is a post-catalysis  of $B$ by $(P,R)$   and  $A$ is post-regenerated by $(R,S,T)$; 
      that $A$ is a pre-transmutation of  $B$ by philosophers' stone $(P,Q,S,T)$ if $A$ is a pre-catalysis  of $B$ by $(Q,S)$  and $A$ is pre-regenerated by $(P,Q,T)$; 
 that $A$  is a bronze transmutation of  $B$ by  $(P,Q,R,S,T)$ if $A$ is a  basic transmutation of  $B$ by  $(P,Q,R,S)$ and $A$ is a post-transmutation of  $B$ by philosophers' stone $(P,R,S,T)$;  
 that  $A$ is a silvern  transmutation of $B$ by philosophers' stone $(P,Q,R,S,T)$ if  $A$ is a  basic  transmutation of  $B$ by  $(P,Q,R,S)$ and  $A$ is a pre-transmutation of  $B$ by philosophers' stone $(P,Q,S,T)$; 
and finally that $A$ is a golden transmutation of $B$ by philosophers' stone $(P,Q,R,S,T)$ if it is a bronze transmutation and a silvern transmutation.
\end{defi}
The following table recapitulates our offers. 
$$ 
\begin{tabular}{| c || c |l| c ||}
 \hline			
 \text{Basic}&post-catalysis & $AP = R  B$   \\ 
  &  pre-catalysis &    $BQ  = S A$    \\ \hline  
   \text{Post-transmutation} & post-catalysis  &   $AP = R  B$  \\  
    & post-regeneration &  $RS + AT  = \Id_{  \mathcal F}$ \\  \hline
  \text{Pre-transmutation} & pre-catalysis &  $BQ  = S A$ \\   
    &   pre-regeneration &   $P Q  +TA  = \Id_\mathcal E$  \\  \hline
      \text{Bronze}& post-catalysis  &    $AP = R  B$   \\ 
  &  pre-catalysis &   $BQ  = S A$   \\ 
  &  post-regeneration   &  $RS + AT  = \Id_{  \mathcal F}$  \\ \hline  
  \text{Silvern} & post-catalysis  &   $AP = R  B$  \\  
&  pre-catalysis   &    $BQ  = S A$  \\   
&  pre-regeneration  &   $P Q  +TA  = \Id_\mathcal E$ \\  \hline
     \text{Golden}& post-catalysis   &   $AP = R  B$   \\ 
     & pre-catalysis &   $BQ  = S A$   \\   
     & pre-regeneration  &    $P Q  +TA  = \Id_\mathcal E$  \\ 
     & post-regeneration  &  $RS + AT  = \Id_{  \mathcal F}$ \\ 
\hline  
 \end{tabular}
$$

Next result establishes that silvern transmutations, respectively adjoint transmutations, satisfy the weak post-regeneration property, respectively the weak pre-regeneration property.
 \begin{proposition} \label{prop-post-r}
If $A$ is a  silvern transmutation of  $B$ by philosophers' stone $(P,Q,R,S,T)$ then $A$ is weakly post-regenerated by $(R,S,T)$.  
If  $A$  is an bronze transmutation of  $B$ by  $(P,Q,R,S,T)$ then $A$ is weakly pre-regenerated by $(P,Q,T)$.
 \end{proposition}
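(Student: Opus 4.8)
The plan is to prove each assertion by a direct computation showing that the relevant defect operator is killed by $A$ on the appropriate side, chaining the three identities that a silvern (resp.\ bronze) transmutation supplies.

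\emph{First assertion.} By definition, a silvern transmutation of $B$ by $(P,Q,R,S,T)$ yields the post-catalysis identity $AP=RB$, the pre-catalysis identity $BQ=SA$ and the pre-regeneration identity $PQ+TA=\Id_{\mathcal E}$. Since weak post-regeneration by $(R,S,T)$ means $\Im A\subset\ker(RS+AT-\Id_{\mathcal F})$, it is equivalent — by linearity, applying the defect operator to $Ax$ for an arbitrary $x\in\mathcal E$ — to the operator identity $(RS+AT-\Id_{\mathcal F})A=0$ in $L(\mathcal E;\mathcal F)$. The computation I would carry out is
\begin{equation*}
RSA=R(SA)=R(BQ)=(RB)Q=(AP)Q=A(PQ)=A(\Id_{\mathcal E}-TA)=A-ATA,
\end{equation*}
using pre-catalysis in the second step, post-catalysis in the fourth, and pre-regeneration in the sixth; all the intermediate compositions are between the spaces dictated by the diagrams, so no checking is needed there. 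Rearranging gives $RSA+ATA-A=0$, i.e.\ $(RS+AT-\Id_{\mathcal F})A=0$, which is the claim.

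\emph{Second assertion.} By definition, a bronze transmutation of $B$ by $(P,Q,R,S,T)$ yields $AP=RB$, $BQ=SA$ and the post-regeneration identity $RS+AT=\Id_{\mathcal F}$. Weak pre-regeneration by $(P,Q,T)$ means $\Im(PQ+TA-\Id_{\mathcal E})\subset\ker A$, equivalently $A(PQ+TA-\Id_{\mathcal E})=0$. The computation is the mirror image of the previous one:
\begin{equation*}
APQ=(AP)Q=(RB)Q=R(BQ)=R(SA)=(RS)A=(\Id_{\mathcal F}-AT)A=A-ATA,
\end{equation*}
using post-catalysis in the second step, pre-catalysis in the fifth, and post-regeneration in the seventh. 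Hence $APQ+ATA-A=0$, i.e.\ $A(PQ+TA-\Id_{\mathcal E})=0$, as desired.

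\emph{On the main difficulty.} There is essentially no obstacle here: each statement is a formal identity between compositions of the given maps, and the only point of attention is the order in which the catalysis and regeneration relations are inserted so that the associativity manipulations line up. It is worth recording that the two halves are exact duals: interchanging ``pre'' with ``post'' and reversing the order of all compositions turns the first argument into the second, so in a write-up one may prove the first and obtain the second by this formal symmetry, or simply by the analogous short computation above.
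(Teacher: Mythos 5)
Your proof is correct and follows essentially the same route as the paper's: it chains post-catalysis, pre-catalysis, and the relevant regeneration identity to show that $(RS+AT-\Id_{\mathcal F})A=0$ in the silvern case and $A(PQ+TA-\Id_{\mathcal E})=0$ in the bronze case. The only difference is cosmetic — you carry out one linear chain of substitutions where the paper applies two substitutions in parallel and then a third — and your observation about the formal pre/post duality between the two halves is a nice remark also implicit in the paper's symmetric layout.
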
 
\begin{proof}
To prove the first part, we first observe that 
$(RS + AT)A=RBQ+ A( \Id_\mathcal E - P Q )$ by pre-catalysis  and pre-regeneration. 
Thus, by post-catalysis, we deduce that 
$(RS + AT)A=A$ which provides the result.
To prove the second part, we compute 
$A(P Q  +TA )= RBQ + (\Id_{  \mathcal F} - RS)A$ by post-catalysis and post-regeneration.
Thus, by pre-catalysis
  $A(P Q  +TA )= A $ which concludes the proof.
\end{proof}

Let us also mention here the  following straightforward elementary properties.
 \begin{proposition} \label{prop-elem1}
  If $A$  is a basic  transmutation of $B$ by  $(P,Q,R,S)$, then we have:  $P \ker B \subset \ker A$, $Q \ker A \subset  \ker B$,  $ R \Im B \subset \Im A$,
 and $S \Im A \subset \Im B$.
   If $A$  is a silvern transmutation of $B$ by  $(P,Q,R,S,T)$  then  $\ker A= P \ker B$ and 
  $ \ker A \cap \ker  Q= 0$.
 \end{proposition}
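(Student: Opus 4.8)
The plan is to verify every inclusion by a direct diagram chase: for the \emph{basic transmutation} part one only needs the two catalysis identities $AP=RB$ and $BQ=SA$, while for the \emph{silvern transmutation} part one additionally invokes the pre-regeneration identity $PQ+TA=\Id_{\mathcal E}$ (note that post-regeneration is \emph{not} available here, so the argument must avoid it). None of the computations is deep; the only thing to be careful about is which hypothesis is applied where.

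For the basic part I would treat the four inclusions in turn. For $P\ker B\subset\ker A$, take $x$ with $Bx=0$ and apply post-catalysis to get $A(Px)=RBx=0$, so $Px\in\ker A$. For $Q\ker A\subset\ker B$, take $x$ with $Ax=0$ and apply pre-catalysis to get $B(Qx)=SAx=0$, so $Qx\in\ker B$. The two image inclusions are the ``transpose'' statements of these: an arbitrary element of $R\,\Im B$ has the form $RBx=A(Px)\in\Im A$ by post-catalysis, and an arbitrary element of $S\,\Im A$ has the form $SAx=B(Qx)\in\Im B$ by pre-catalysis. This settles the first assertion.

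For the silvern part, the inclusion $P\ker B\subset\ker A$ is already contained in the basic part, so the content is the reverse inclusion $\ker A\subset P\ker B$. Here I would use pre-regeneration: for $x\in\ker A$ one has $x=PQx+TAx=PQx=P(Qx)$, and $Qx\in\ker B$ by the basic part (applied to this same $x\in\ker A$), hence $x\in P\ker B$. Finally, if $x\in\ker A\cap\ker Q$, then pre-regeneration gives $x=PQx+TAx=P\cdot 0+T\cdot 0=0$, so $\ker A\cap\ker Q=0$.

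The only (minor) obstacle is hygienic: keeping track of the fact that the basic conclusions use exactly the catalysis identities, and that the two silvern conclusions use in addition only the pre-regeneration identity, never post-regeneration — so the proof goes through for a silvern transmutation even though it is not a golden one.
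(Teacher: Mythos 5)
Your proof is correct and is exactly the direct verification the paper has in mind (the paper labels these as ``straightforward elementary properties'' and omits the proof). Your careful note that the silvern part uses only pre-regeneration, never post-regeneration, is the right hygienic observation.
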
 
%

  %%%%%%%%%%%%%%%%%%%%
\subsection{Equivalence class properties}

 We start with the following observation on self-transmutation.
 \begin{proposition} \label{prop-elem0}
  Any $B$ in  $L(\mathcal  G; \mathcal H)$ is a golden 
   transmutation of  itself by the philosophers' stone 
 $(\Id_\mathcal G ,\Id_\mathcal G , \Id_\mathcal H,\Id_\mathcal H ,0_\mathcal  G)$.
 \end{proposition}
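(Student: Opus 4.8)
The plan is to verify directly that the philosophers' stone $(\Id_{\mathcal G},\Id_{\mathcal G},\Id_{\mathcal H},\Id_{\mathcal H},0_{\mathcal G})$ makes $B$ satisfy all four defining identities of a golden transmutation of itself, namely post-catalysis, pre-catalysis, pre-regeneration and post-regeneration, with $A$ and $B$ both taken to be the given $B \in L(\mathcal G;\mathcal H)$, and with $\mathcal E = \mathcal G$, $\mathcal F = \mathcal H$. Since a golden transmutation is by definition a bronze and a silvern transmutation, and each of those is in turn a conjunction of a basic transmutation with a pre- or post-transmutation, it suffices to check the four algebraic equations appearing in the last block of the recapitulating table.

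First I would substitute $P = Q = \Id_{\mathcal G}$, $R = S = \Id_{\mathcal H}$, $T = 0_{\mathcal G}$ and $A = B$ into each identity. For post-catalysis, $AP = R B$ reads $B\,\Id_{\mathcal G} = \Id_{\mathcal H}\,B$, i.e. $B = B$. For pre-catalysis, $BQ = SA$ reads $B\,\Id_{\mathcal G} = \Id_{\mathcal H}\,B$, again $B = B$. For pre-regeneration, $PQ + TA = \Id_{\mathcal E}$ reads $\Id_{\mathcal G}\,\Id_{\mathcal G} + 0_{\mathcal G}\,B = \Id_{\mathcal G}$, i.e. $\Id_{\mathcal G} = \Id_{\mathcal G}$. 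For post-regeneration, $RS + AT = \Id_{\mathcal F}$ reads $\Id_{\mathcal H}\,\Id_{\mathcal H} + B\,0_{\mathcal G} = \Id_{\mathcal H}$, i.e. $\Id_{\mathcal H} = \Id_{\mathcal H}$. One small point worth stating is that these compositions type-check: $0_{\mathcal G}$ is understood as the zero map on $\mathcal G$ so that $T A = 0_{\mathcal G} B$ makes sense as a map $\mathcal G \to \mathcal G$ and $A T = B\, 0_{\mathcal G}$ as a map $\mathcal G \to \mathcal H$ once one reads $0_{\mathcal G}$ in the latter as the zero map into $\mathcal G$; alternatively $T$ is the zero map in $L(\mathcal F;\mathcal E) = L(\mathcal H;\mathcal G)$, which is the slot it occupies in the diagrams, and both interpretations give the trivial term.

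There is essentially no obstacle here: the statement is a sanity check that the notion of transmutation is reflexive, and every identity collapses to a tautology once the identity maps are inserted. The only thing requiring a word of care is bookkeeping — making sure that the tuple is plugged into the right slots of Definition on basic, pre-, post-, bronze, silvern and golden transmutations, and that the composition $\Id \circ \Id = \Id$ and $0 \circ (\cdot) = 0$ are used in the correct order. Accordingly the written proof would simply list the four identities with the substitution carried out, noting that each reduces to an identity of the form $X = X$, and conclude that $B$ is simultaneously a basic, pre-, post-, bronze and silvern transmutation of itself, hence a golden one.
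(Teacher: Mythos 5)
Your verification is correct and matches the paper's (unstated, implicit) argument: the proposition is presented in the paper as a straightforward observation, and the intended justification is exactly the direct substitution you carry out, with each of the four identities collapsing to a tautology. Nothing is missing.
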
 
  It is clear that if $A$ is a basic  transmutation of $B$ by  $(P,Q,R,S)$
then $B$ is a basic  transmutation of $A$ by  $(Q,P,S,R)$. 
        As a next step towards the reflexivity property of silvern 
    transmutations, 
we investigate the existence of a linear mapping $\tilde T$ such that 
 $QP + \tilde T B = \Id_\mathcal G $, starting with introducing the following  necessary  condition.

 \begin{defi} \label{proper} 
 We say
  that  $A$ is a proper silvern (respectively golden) transmutation of  $B$  by  philosophers' stone $(P,Q,R,S,T)$ if
   $A$ is a silvern (respectively golden) transmutation of  $B$  by  philosophers' stone $(P,Q,R,S,T)$
   and that $ \ker B \cap \ker  P = 0$.
\end{defi}

To prove that the proper condition is also sufficient, we rely on the following classical result, which, in the general case, makes use of the axiom of choice. 
 \begin{lemma} \label{rinverse}
Let $A$ in $  L(\mathcal E ; \mathcal F)$  and $F $ in $  L(\mathcal E ; \mathcal E)$ with $\ker A \subset \ker F$.
Then there exists $T$ in $L(\mathcal F ; \mathcal E)$ such that $TA=F$.  Moreover  $T$ is uniquely determined on $\Im A$. 
\end{lemma}

Next result establishes that proper silvern transmutations can be inverted.
 \begin{proposition} \label{prop-elem2}
  Let $A$  a proper silvern transmutation of $B$ by  $(P,Q,R,S,T)$. 
Then $ \ker B \subset \ker (\Id_\mathcal G -QP) $ and 
there is a linear mapping $\tilde T$, whose restriction to $\Im B$ is uniquely defined, such that  $\tilde T B = \Id_\mathcal G -QP$ 
and such  that  $B$  is a proper silvern  transmutation of $A$ by philosopher's stone $(Q,P,S,R, \tilde T )$. 
 \end{proposition}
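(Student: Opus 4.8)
The plan is to verify, one identity at a time, that the tuple $(Q,P,S,R,\tilde T)$ is a proper silvern philosophers' stone for the transmutation of $A$ into $B$, using the already-established symmetry of basic transmutations together with Proposition~\ref{prop-elem2}'s companion, Lemma~\ref{rinverse}, to manufacture $\tilde T$. First I would recall that since $A$ is a basic transmutation of $B$ by $(P,Q,R,S)$, the remark preceding Definition~\ref{proper} gives that $B$ is a basic transmutation of $A$ by $(Q,P,S,R)$; so the post-catalysis $BQ = SA$ and pre-catalysis $AP = RB$ identities needed for the reversed transmutation are already in hand. It remains to produce $\tilde T \in L(\mathcal G;\mathcal G)$ — wait, rather $\tilde T \in L(\mathcal H;\mathcal G)$ — realizing the pre-regeneration identity $QP + \tilde T B = \Id_{\mathcal G}$ for the pair $(Q,P)$.

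The key step is the inclusion $\ker B \subset \ker(\Id_{\mathcal G} - QP)$. To see this, take $g \in \ker B$. By Proposition~\ref{prop-elem1} applied to the silvern transmutation $A$ of $B$, we have $\ker A = P\ker B$, so $Pg \in \ker A$. Now apply the pre-regeneration of $A$, namely $PQ + TA = \Id_{\mathcal E}$, to the element $Pg$: since $APg = RBg = 0$ we get $PQPg = Pg$, i.e. $P(\Id_{\mathcal G} - QP)g = 0$, so $(\Id_{\mathcal G} - QP)g \in \ker P$. On the other hand $B(\Id_{\mathcal G} - QP)g = Bg - BQPg = -SAPg = -SRBg = 0$ using $Bg = 0$ and post-catalysis $BQ = SA$ and pre-catalysis $AP = RB$; hence $(\Id_{\mathcal G} - QP)g \in \ker B$ as well. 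The properness hypothesis $\ker B \cap \ker P = 0$ then forces $(\Id_{\mathcal G} - QP)g = 0$, giving the claimed inclusion. With $F := \Id_{\mathcal G} - QP$ and $B$ in the role of "$A$", Lemma~\ref{rinverse} (with $\ker B \subset \ker F$) yields $\tilde T \in L(\mathcal H;\mathcal G)$ with $\tilde T B = \Id_{\mathcal G} - QP$, uniquely determined on $\Im B$.

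Finally I would check that $(Q,P,S,R,\tilde T)$ is actually a \emph{proper} silvern stone. We have: the two catalysis identities (from the symmetry remark); the pre-regeneration identity $QP + \tilde T B = \Id_{\mathcal G}$ (just constructed) — these four facts say precisely that $B$ is a silvern transmutation of $A$ by $(Q,P,S,R,\tilde T)$. Properness of this reversed transmutation requires $\ker A \cap \ker Q = 0$, which is exactly the second conclusion of Proposition~\ref{prop-elem1} for the silvern transmutation $A$ of $B$; so properness is automatic. The only genuine subtlety, and the step I expect to be the main obstacle, is the $\ker B \cap \ker P = 0$ argument above: one must be careful that it is the \emph{properness of the original} transmutation (not some new hypothesis) that closes it, and that the non-uniqueness of $\tilde T$ off $\Im B$ does not spoil any identity — but all the identities involving $\tilde T$ only ever apply it to elements of $\Im B$, so this causes no trouble. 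For the golden case one would additionally have the post-regeneration $RS + AT = \Id_{\mathcal F}$ in the original, which by the same symmetry bookkeeping supplies the missing fourth identity $SR + B\tilde T = \Id_{\mathcal H}$ for the reverse; I would remark that this extends the statement to proper golden transmutations verbatim, though the statement as written only claims the silvern case.
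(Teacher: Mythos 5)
Your proof is correct and follows the paper's argument essentially verbatim: you show $v-QPv\in\ker B\cap\ker P=0$ for $v\in\ker B$ using post-catalysis, pre-catalysis, pre-regeneration, and the properness hypothesis, then invoke Lemma~\ref{rinverse} to build $\tilde T$, and obtain properness of the reversed transmutation from Proposition~\ref{prop-elem1}. One caveat on your closing aside: the post-regeneration identity $RS+AT=\Id_{\mathcal F}$ does \emph{not} by mere symmetry yield $SR+B\tilde T=\Id_{\mathcal H}$ for the reverse (the construction of $\tilde T$ only controls $\tilde T B$, not $B\tilde T$), which is precisely why the paper later introduces the separate notion of two-sided golden transmutation with both backward regeneration identities imposed as independent requirements.
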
 
 \begin{proof}
We observe that,  for $v\in \ker B$, $u:=Pv$ is in $\ker A$, by post-catalysis, 
so that, using now the pre-catalysis,  $BQPv = SAPv=0$  and, by the pre-regeneration, $u = PQ u$. 
Hence,  $v-QPv \in  \ker B \cap \ker P$,  which leads, by the compatibility condition, to $v$ in $\ker (\Id_\mathcal G -QP) $. 
Thus  $ \ker B \subset \ker (\Id_\mathcal G -QP) $, and
then, by  Lemma \ref{rinverse}, 
there is  a linear mapping $\tilde T$, whose restriction to $\Im B$ is uniquely defined, such that  $\tilde T B = \Id_\mathcal G -QP$.
Clearly,  $B$  is a basic transmutation of $A$ by  $(Q,P,S,R)$, and therefore a  silvern  transmutation of $A$ by philosopher's stone $(Q,P,S,R, \tilde T )$, with the previous result.  
  Moreover, by Proposition \ref{prop-elem1},  $ \ker A \cap \ker  Q=0$,  
    so that     $B$  is a proper silvern   transmutation of $A$ by  $(Q,P,S,R,\tilde T  )$. 
\end{proof}
 Now, to study the transitivity of the notion,  let $\mathcal E$, $\mathcal F$, $\mathcal G$, $\mathcal H$, $\tilde{\mathcal G}$ and $\tilde{\mathcal H}$ some vector spaces. 
   Let  $A, B,P_1,Q_1 ,R_1 ,S_1 ,T_1 ,{P}_2 ,{Q}_2 ,{R}_2 ,{S}_2 ,{T}_2 $ some linear mappings between some pairs of those spaces according to the following diagrams:
\begin{gather*}
 \xymatrix{ 
& \mathcal E    \ar@<2pt>[r]^A  \ar@<2pt>[d]_{P_1} & \mathcal F  \ar@<2pt>[d]_{R_1}  \\
\text{Catalysis: }&   \mathcal G   \ar@<2pt>[r]^B  \ar@<2pt>[u]_{Q_1}   \ar@<2pt>[d]_{P_2}  &  \mathcal H  \ar@<2pt>[u]_{S_1}  \ar@<2pt>[d]_{R_2}\\
 & \tilde{\mathcal G}   \ar@<2pt>[r]^C  \ar@<2pt>[u]_{Q_2}  & \tilde{\mathcal H}  \ar@<2pt>[u]_{S_2}
  }  
 \quad    \quad   \quad  \quad    \quad \quad 
   \xymatrix{
& \mathcal E    \ar@<2pt>[r]^A  \ar@<2pt>[d]_{P_1} &    \mathcal F    \ar@<2pt>[d]_{S_1}  \ar@<2pt>[l]^{T_1} \\   
 \text{Pre-regeneration: }&  \mathcal G   \ar@<2pt>[r]^B  \ar@<2pt>[u]_{Q_1} \ar@<2pt>[d]_{P_2}&  \mathcal H      \ar@<2pt>[l]^{T_2} \\  
&   \tilde{\mathcal G}       \ar@<2pt>[u]_{Q_2}&
  }  
  \end{gather*}

 \begin{proposition} \label{prop-elem3}
    If $A$  is a  silvern  transmutation of $B$ by  $(P_1,Q_1,R_1,S_1,T_1)$
 and $B$ is a  silvern  transmutation of $C$ by  $({ P}_2,{ Q }_2,{ R }_2,{ S }_2,{ T }_2)$, 
 then   $A$ is   a  silvern  transmutation of  $C$ by  $(P_1 P_2,Q_2 Q_1,R_1  R_2, S_2 S_1,T_1+P_1  T_2 S_1)$.
Moreover, if the two first transmutations are  proper silvern  transmutations then so is the third. 
 \end{proposition}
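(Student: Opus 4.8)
The plan is to verify the four defining identities of a proper silvern transmutation for the composite data $(P_1 P_2, Q_2 Q_1, R_1 R_2, S_2 S_1, T_1 + P_1 T_2 S_1)$, using the eight identities furnished by the two given silvern transmutations (post-catalysis, pre-catalysis, pre-regeneration for each, plus post-regeneration if we were in the golden case — but here only the silvern ones are assumed). First I would check the two catalysis identities, which are the easy composability step: post-catalysis $A(P_1 P_2) = R_1 R_2 C$ follows by inserting $AP_1 = R_1 B$ and then $BP_2 = R_2 C$; symmetrically pre-catalysis $C(Q_2 Q_1) = S_2 S_1 A$ follows from $BQ_1 = S_1 A$ and $CQ_2 = S_2 B$. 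So $A$ is a basic transmutation of $C$ by $(P_1 P_2, Q_2 Q_1, R_1 R_2, S_2 S_1)$, and it only remains to establish the pre-regeneration identity with the proposed $T := T_1 + P_1 T_2 S_1$.

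Next I would compute $(P_1 P_2)(Q_2 Q_1) + TA$ and show it equals $\Id_{\mathcal E}$. Expanding, $T A = T_1 A + P_1 T_2 S_1 A$. Now $S_1 A = B Q_1$ by pre-catalysis, so $P_1 T_2 S_1 A = P_1 T_2 B Q_1$, and by the pre-regeneration of $B$ into $C$, namely $P_2 Q_2 + T_2 B = \Id_{\mathcal G}$, we get $T_2 B Q_1 = (\Id_{\mathcal G} - P_2 Q_2) Q_1 = Q_1 - P_2 Q_2 Q_1$. Hence $P_1 T_2 S_1 A = P_1 Q_1 - P_1 P_2 Q_2 Q_1$. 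Adding $P_1 P_2 Q_2 Q_1$ from the first term of the candidate identity cancels the second piece, leaving $P_1 Q_1 + T_1 A$, which equals $\Id_{\mathcal E}$ by the pre-regeneration of $A$ into $B$. This gives $(P_1 P_2)(Q_2 Q_1) + TA = \Id_{\mathcal E}$, so $A$ is pre-regenerated by $(P_1 P_2, Q_2 Q_1, T)$, hence a silvern transmutation of $C$.

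For the properness claim, I need to show $\ker C \cap \ker(P_1 P_2) = 0$ under the hypothesis that both given transmutations are proper, i.e. $\ker B \cap \ker P_1 = 0$ and $\ker C \cap \ker P_2 = 0$. Take $w \in \ker C$ with $P_1 P_2 w = 0$. The natural move is to use Proposition \ref{prop-elem2} (or rather the argument inside Proposition \ref{prop-elem1}/\ref{prop-elem2}): since $A$ is a proper silvern transmutation of $B$, one has $\ker A \cap \ker Q_1 = 0$ and $\ker A = P_1 \ker B$; I want to conclude $P_2 w \in \ker B$ and then $P_2 w = 0$ and then $w = 0$. Concretely, $P_2 w \in \ker B$ because $B P_2 w = R_2 C w = 0$ by post-catalysis of $B$ into $C$; then $P_1(P_2 w) = 0$ together with $P_2 w \in \ker B$ and properness $\ker B \cap \ker P_1 = 0$ forces $P_2 w = 0$; finally $w \in \ker C \cap \ker P_2 = 0$. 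So the composite is proper.

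I expect no serious obstacle here — the computation is purely formal algebra in $L(\cdot;\cdot)$ with no need for the axiom of choice or any finiteness, since we are verifying identities rather than constructing $\tilde T$. The one point requiring a little care is bookkeeping the direction of the arrows so that each substituted identity is applied on the correct side (e.g. not confusing $AP_1 = R_1 B$ with a would-be $P_1 A = \dots$), and making sure $T = T_1 + P_1 T_2 S_1$ has the right domain and codomain ($\mathcal F \to \mathcal E$), which it does since $S_1 : \mathcal F \to \mathcal H$, $T_2 : \mathcal H \to \mathcal G$, $P_1 : \mathcal G \to \mathcal E$. A remark worth adding is that exactly the same pattern of argument, applied to the post-regeneration identities instead, would show the analogous transitivity for bronze and hence golden transmutations, but that is not needed for the statement as given.
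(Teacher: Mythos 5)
Your proof is correct, and the algebraic verification of the three identities for the composite stone (post-catalysis, pre-catalysis, pre-regeneration with $T = T_1 + P_1 T_2 S_1$) matches what the paper leaves implicit as ``a direct computation.'' Where you genuinely diverge from the paper is in the properness claim. The paper first invokes Proposition~\ref{prop-elem2} to invert both given transmutations, i.e.\ to produce $\tilde T_1, \tilde T_2$ (via Lemma~\ref{rinverse}, hence in general via the axiom of choice) so that $B$ is a proper silvern transmutation of $A$ and $C$ of $B$, then applies the already-proved transitivity to conclude that $C$ is a proper silvern transmutation of $A$, and finally reads off $\ker C \cap \ker(P_1 P_2) = 0$ from the last property of Proposition~\ref{prop-elem1}. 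You instead argue directly: $w \in \ker C$ with $P_1 P_2 w = 0$ gives $B P_2 w = R_2 C w = 0$ by post-catalysis, so $P_2 w \in \ker B \cap \ker P_1 = 0$ by properness of the first transmutation, and then $w \in \ker C \cap \ker P_2 = 0$ by properness of the second. This is a cleaner, more elementary route: it avoids constructing $\tilde T_1, \tilde T_2$ altogether and stays at the level of kernels. The paper's detour buys a nice illustration of the symmetry structure (it reuses Propositions~\ref{prop-elem1} and~\ref{prop-elem2}), but for the statement at hand your argument is shorter and choice-free.
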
 
 \begin{proof}
The first part is   a direct computation. For the last part, we observe that, 
by  Proposition \ref{prop-elem2}, there are some  linear mappings $\tilde T_1$ and $\tilde T_2$ such that 
$B$ (respectively $C$) is a proper silvern  transmutation of $A$ (resp. $B$)  by  
$$(Q_1,P_1,S_1,R_1, \tilde T_1)  \quad   \text{(resp. }  ( Q_2, P_2, S_2, R_2,  \tilde T_2  )).$$
Thus, by the first part, $C$  is  a proper silvern    transmutation of $A$ by  
 $$( Q_2 Q_1,P_1  P_2, S_2 S_1,R_1 R_2,  \tilde T_2 +  Q_2 \tilde T_1   R_2) .$$
 Then, by the last property of Proposition \ref{prop-elem1}, we infer that $ \ker C \cap \ker  P_1  P_2= 0$, so that   $A$ is   a proper silvern  transmutation of  $C$ by  $(P_1 P_2,Q_2 Q_1,R_1  R_2, S_2 S_1,T_1+P_1  T_2 S_1)$.
\end{proof}

Observe that, as a consequence of the previous properties,  proper silvern  transmutations  share the properties  of an equivalence relation. 
The following characterisation determines whether or not two linear mappings are equivalent in terms of the existence of a linear bijective mapping between the kernels.
The associated philosopher's stones benefit from some degrees of freedom. 
\begin{theorem} \label{th:classes}
  If $A$  is a proper silvern transmutation of $B$ by  $(P,Q,R,S,T)$ then the restriction of $P$ to $\ker B$ is a linear bijection from  $\ker B$ onto  $\ker A$ and 
  the restriction of $Q$ to $\ker A$ is a linear bijection from  $\ker A$ onto  $\ker B$. Moreover  the restriction of $PQ$ to $\ker A$ is the identity. 
  Conversely, if there is a linear bijection $P$ from $\ker B$ onto  $\ker A$, 
   considering its inverse  $Q$ from  $\ker A$ onto  $\ker B$, then 
   for any linear extension of $P$  to $\mathcal G$, 
   for any linear extension of $Q$ to $\mathcal E$, 
     for any linear  triple of mappings  $R$,  $S$ and $T$   in the non-trivial linear space determined by the equations: $RB=AP$, $SA=BQ$ and $TA = \Id - PQ$, 
   we have that  $A$  is a proper silvern transmutation of $B$ by  $(P,Q,R,S,T)$. 
\end{theorem}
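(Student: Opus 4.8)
The plan is to split the statement into its two implications, the ``only if'' part being essentially a repackaging of results already proved, and the ``if'' part requiring the honest work of constructing a philosopher's stone and checking properness.

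\textbf{The forward implication.} Suppose $A$ is a proper silvern transmutation of $B$ by $(P,Q,R,S,T)$. By Proposition \ref{prop-elem1} (second clause, which needs only the silvern hypothesis) we know $\ker A = P\ker B$, so the restriction of $P$ to $\ker B$ is a surjection onto $\ker A$. For injectivity, suppose $v\in\ker B$ and $Pv=0$; then $v\in\ker B\cap\ker P = 0$ by the proper hypothesis. Hence $P|_{\ker B}$ is a bijection onto $\ker A$. Applying Proposition \ref{prop-elem2}, $B$ is a proper silvern transmutation of $A$ by $(Q,P,S,R,\tilde T)$, so the same argument with the roles reversed shows $Q|_{\ker A}$ is a bijection onto $\ker B$. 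To see that $PQ$ restricted to $\ker A$ is the identity: for $u\in\ker A$, the pre-regeneration identity gives $PQu + TAu = u$, and $Au=0$, so $PQu = u$. (Equivalently, $Q|_{\ker A}$ and $P|_{\ker B}$ are mutually inverse by the bijection arguments, which forces $PQ|_{\ker A}=\Id$.)

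\textbf{The converse implication.} Now suppose we are given a linear bijection $P_0\colon\ker B\to\ker A$ with inverse $Q_0\colon\ker A\to\ker B$. Choose any linear extensions $P\in L(\mathcal G;\mathcal E)$ of $P_0$ and $Q\in L(\mathcal E;\mathcal G)$ of $Q_0$; such extensions exist by the usual complement/basis argument (axiom of choice in general). The claim is that the set of triples $(R,S,T)$ satisfying $RB = AP$, $SA = BQ$ and $TA = \Id_{\mathcal E} - PQ$ is a non-empty affine subspace, and that any such choice makes $A$ a proper silvern transmutation of $B$ by $(P,Q,R,S,T)$. Solvability of $RB=AP$ for $R$ requires $\ker B\subset\ker(AP)$: if $Bv=0$ then $Pv = P_0v\in\ker A$, so $APv=0$; then Lemma \ref{rinverse} (applied to $B$ in place of $A$) yields $R$. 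Solvability of $SA=BQ$ requires $\ker A\subset\ker(BQ)$: if $Au=0$ then $Qu=Q_0u\in\ker B$, so $BQu=0$; Lemma \ref{rinverse} gives $S$. Solvability of $TA = \Id_{\mathcal E}-PQ$ requires $\ker A\subset\ker(\Id_{\mathcal E}-PQ)$, i.e. $PQu=u$ for $u\in\ker A$, which holds because $PQu = P_0Q_0u = u$ since $P_0,Q_0$ are mutually inverse. So Lemma \ref{rinverse} gives $T$. The solution set of each equation is non-empty and (being the solution set of a linear inhomogeneous equation) affine, hence the joint solution set is a non-trivial affine — in particular linear-in-the-sense-the-authors-mean — space.

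\textbf{Verifying the silvern and proper properties.} With such $(R,S,T)$ in hand: $AP = RB$ is post-catalysis, $BQ = SA$ is pre-catalysis, so $A$ is a basic transmutation of $B$ by $(P,Q,R,S)$. The pre-regeneration identity $PQ + TA = \Id_{\mathcal E}$ is exactly the third equation, so $A$ is pre-regenerated by $(P,Q,T)$, and combined with pre-catalysis this gives that $A$ is a pre-transmutation, hence a silvern transmutation of $B$ by $(P,Q,R,S,T)$. Finally, properness requires $\ker B\cap\ker P = 0$: if $v\in\ker B$ and $Pv=0$ then $P_0v=0$, so $v=0$ since $P_0$ is injective. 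This completes the converse.

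\textbf{Main obstacle.} The genuinely delicate point is the non-emptiness of the philosopher's stone, i.e. checking the three kernel-containment hypotheses of Lemma \ref{rinverse} hold for the chosen extensions $P,Q$ — the key being that on the relevant kernels $P$ and $Q$ act as the prescribed mutually-inverse bijections, which is precisely what kills the obstructions. Everything else is bookkeeping: matching the defining equations to the clauses of the definitions, and invoking Propositions \ref{prop-elem1} and \ref{prop-elem2} for the forward direction.
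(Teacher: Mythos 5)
Your proof is correct and follows essentially the same route as the paper: Proposition \ref{prop-elem1} plus properness for $P|_{\ker B}$, Proposition \ref{prop-elem2} to transfer the argument to $Q|_{\ker A}$, pre-regeneration for $PQ|_{\ker A}=\Id$, and three applications of Lemma \ref{rinverse} in the converse, with the kernel-containment checks reducing to $P,Q$ acting as mutually inverse bijections on the relevant kernels. Your write-up is slightly more explicit about the affine structure of the solution set and about injectivity in the forward direction, but these are elaborations, not a different argument.
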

 \begin{proof}
  Let us assume that $A$  is a proper silvern transmutation of $B$ by philosopher's stone $(P,Q,R,S,T)$.
According to  second to last property of Proposition \ref{prop-elem1},  
and since  $ \ker B \cap \ker  P = 0$,
  the restriction of $P$ to $\ker B$ is a linear bijection from  $\ker B$ onto  $\ker A$.
By  Proposition \ref{prop-elem2}, we similarly have that  the restriction of $Q$ to $\ker A$ is a linear bijection from  $\ker A$ onto  $\ker B$.
By pre-regeneration,  the restriction of $PQ$ to $\ker A$ is the identity. 
 Conversely, assume that  there is a linear bijection $Q$ from  $\ker A$ onto  $\ker B$. Let us denote  $P$ its inverse from $\ker B$ onto  $\ker A$ and consider 
 any linear extension of $P$  to $\mathcal G$, 
   any linear extension of $Q$ to $\mathcal E$. 
   Since  $\ker B \subset \ker AP$, $\ker A \subset \ker BQ$ and $\ker A \subset \ker ( \Id -PQ)$, by 
    Lemma \ref{rinverse}, there are some  linear mappings  $R$,  $S$ and $T$   
    such that $AP=RB$, $BQ=SA$ and $TA = \Id - PQ$,  and they are uniquely defined, respectively, on 
    $\Im B$, $\Im A$ and $\Im A$.   
 Since  $ \ker B \cap \ker  P = 0$,  we conclude that $A$  is a proper silvern transmutation of $B$ by  $(P,Q,R,S,T)$. 
    \end{proof}
Actually Theorem \ref{th:classes} above connects the transmutation viewpoint introduced here with the 
issue of the classification of the spaces of solutions  of  linear systems through $D$-modules theory, considered in  algebraic analysis since Malgrange, see \cite{MalgrangeB}, and   \cite{CQ} for a more recent account on the subject. 
\medskip

Let us mention the following counterpart of Theorem \ref{th:classes}  for silvern transmutations.
\begin{theorem} \label{th:classes-st}
  If $A$  is a silvern transmutation of $B$ by  $(P,Q,R,S,T)$ then 
    the restriction of $Q$ to $\ker A$ is a linear injective mapping from  $\ker A$ into  $\ker B$, 
  $ P \ker B  \subset \ker A $, 
  and  the restriction of $PQ$ to $\ker A$ is the identity. 
  Conversely, if there is a linear injective mapping $Q$ from  $\ker A$ into  $\ker B$, considering  any linear extension of  
   its inverse $P$  from  $Q\ker A$ onto  $\ker B$, and any linear extension of $Q$  to $\mathcal E$, then 
   $A$  is a silvern  transmutation of $B$ by  $(P,Q,R,S,T)$ where $(R,S,T)$ is  any  triple of mappings  $R$,  $S$ and $T$   in the non-trivial linear space determined by the equations: $RB=AP$, $SA=BQ$ and $TA = \Id - PQ$. 
\end{theorem}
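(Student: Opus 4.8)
The plan is to mirror the proof of Theorem \ref{th:classes}, keeping track of which implications survive the weakening of "bijection" to "injection" and the loss of the properness hypothesis $\ker B \cap \ker P = 0$. Concretely, the direct implication should come almost verbatim from Proposition \ref{prop-elem1} and the pre-regeneration identity: if $A$ is a silvern transmutation of $B$ by $(P,Q,R,S,T)$, then by the second-to-last assertion of Proposition \ref{prop-elem1} we have $\ker A = P\ker B$, so in particular $P\ker B \subset \ker A$; by the last assertion $\ker A \cap \ker Q = 0$, which says exactly that the restriction of $Q$ to $\ker A$ is injective into (a subspace of) $\mathcal G$, and one checks $Q\ker A \subset \ker B$ from $SA = BQ$ since $A$ vanishes on $\ker A$. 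Finally, pre-regeneration $PQ + TA = \Id_{\mathcal E}$ restricted to $\ker A$ gives $PQ = \Id$ on $\ker A$.

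For the converse, I would start from an injective linear $Q : \ker A \to \ker B$. On the image $Q\ker A \subset \ker B$ it has an inverse, which extends (choosing a complement, using the axiom of choice) to a linear map $P : \mathcal G \to \mathcal E$; note $P$ restricted to $Q\ker A$ lands in $\ker A$, but we should also arrange that $P$ maps all of $\ker B$ into $\ker A$, which is possible since $Q\ker A$ is a subspace of $\ker B$ and any linear complement of it inside $\ker B$ can be sent into $\ker A$ freely (indeed sent to $0$). Then $PQ = \Id$ on $\ker A$ by construction. Now extend $Q$ arbitrarily to a linear map $\mathcal E \to \mathcal G$. The three conditions $\ker B \subset \ker AP$ (because $P\ker B \subset \ker A$), $\ker A \subset \ker BQ$ (because $Q\ker A \subset \ker B$), and $\ker A \subset \ker(\Id - PQ)$ (because $PQ = \Id$ on $\ker A$) are exactly the hypotheses of Lemma \ref{rinverse}, which furnishes $R$, $S$, $T$ with $AP = RB$, $BQ = SA$, $TA = \Id - PQ$, uniquely determined on $\Im B$, $\Im A$, $\Im A$ respectively. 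This makes $A$ a basic transmutation of $B$ by $(P,Q,R,S)$ and, by the pre-regeneration identity just obtained together with Proposition \ref{prop-post-r} giving the weak post-regeneration, a genuine silvern transmutation by $(P,Q,R,S,T)$; and the non-triviality of the solution space for $(R,S,T)$ follows because at least one solution exists.

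The main subtlety I anticipate is bookkeeping the domain/codomain of $P$: in the statement $P$ is a "linear extension of its inverse $P$ from $Q\ker A$ onto $\ker B$", which literally would force $P(Q\ker A) = \ker B$ and is only sensible when $Q\ker A = \ker B$; I read this as a mild abuse meaning $P$ is defined on $\mathcal G$, restricts to the inverse of $Q$ on $Q\ker A$, and satisfies $P\ker B \subset \ker A$ — that is the property actually used. The second point requiring a little care is that silvern (as opposed to proper silvern) does not give $\ker A = P\ker B$ as an equality in the converse direction unless $P$ is chosen well; but the theorem only claims $P\ker B \subset \ker A$, so the weaker inclusion arranged above suffices. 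Everything else is the same routine diagram-chasing as in Theorem \ref{th:classes}, using Lemma \ref{rinverse} in place of an explicit construction. I would therefore write the proof as: (i) direct part via Proposition \ref{prop-elem1} and pre-regeneration; (ii) converse, constructing $P$ and invoking Lemma \ref{rinverse} for $R,S,T$, then citing Proposition \ref{prop-post-r} to upgrade basic $+$ pre-regeneration to silvern.
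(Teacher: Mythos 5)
Your proof is correct, and since the paper states Theorem \ref{th:classes-st} without supplying a proof, your argument is the natural adaptation of the proof given for Theorem \ref{th:classes}: the direct part is Proposition \ref{prop-elem1} together with the pre-regeneration identity restricted to $\ker A$, and the converse is three applications of Lemma \ref{rinverse} once $P$ and $Q$ are suitably extended. Two small remarks. First, the appeal to Proposition \ref{prop-post-r} in your last step is superfluous and slightly backwards: by definition a silvern transmutation is exactly the conjunction of $AP=RB$, $BQ=SA$ and $PQ+TA=\Id_{\mathcal E}$, all three of which you have already secured via Lemma \ref{rinverse} and the construction of $P$ and $Q$; there is no need to pass through weak post-regeneration, and in any case Proposition \ref{prop-post-r} deduces weak post-regeneration \emph{from} the silvern property, not the converse. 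Second, you are right to flag the wording about extending the inverse of $Q$: extending the partial inverse arbitrarily from $Q\ker A$ to all of $\mathcal G$ does not by itself guarantee $\ker B\subset\ker(AP)$, which is the hypothesis needed to apply Lemma \ref{rinverse} for $R$; one must, as you do, first extend across a complement of $Q\ker A$ inside $\ker B$ with values in $\ker A$ (sending it to $0$ suffices), and only afterwards extend freely off $\ker B$. With this reading the converse is indeed correct, and since at least one triple $(R,S,T)$ exists the asserted non-triviality of the solution space also holds.
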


Let us now turn to the case of golden transmutations which we reinforce as follows.

 \begin{defi} \label{trans} 
We say that $A$ is a two-sided golden transmutation of $B$ by philosophers' stone $(P,Q,R,S,T,\tilde T)$ if 
  $AP = R  B$, $BQ  = S A$, $P Q  +TA  = \Id_\mathcal E$,  $RS + AT  = \Id_{  \mathcal F}$,   $QP  + \tilde T B  = \Id_\mathcal G$ and  $SR +  B \tilde T  = \Id_{  \mathcal H}$. 
\end{defi}
$$ 
\begin{tabular}{| c || c |l| c ||}
 \hline			
     \text{Two-sided  golden}& post-catalysis   &   $AP = R  B$   \\ 
     & pre-catalysis &   $BQ  = S A$   \\   
     & pre-regeneration  &    $P Q  +TA  = \Id_\mathcal E$  \\ 
     & post-regeneration  &  $RS + AT  = \Id_{  \mathcal F}$ \\ 
     & backward pre-regeneration  &    $QP  + \tilde T B  = \Id_\mathcal G$  \\ 
     & backward post-regeneration  &  $SR +  B \tilde T  = \Id_{  \mathcal H}$ \\ 
\hline  
 \end{tabular}
$$
\begin{align*}
   \xymatrix{
 \mathcal E   \ar@<2pt>[d]_P&   \\
   \mathcal G    \ar@<2pt>[u]_Q \ar@<2pt>[r]^B   &  \mathcal H   \ar@<2pt>[l]^{\tilde T} 
   } 
  &\quad  \quad  \quad   \quad 
   \xymatrix{
& \mathcal F   \ar@<2pt>[d]_R \\
   \mathcal G    \ar@<2pt>[r]^B   &  \mathcal H   \ar@<2pt>[l]^{\tilde T}  \ar@<2pt>[u]_S
 }
  \\   \text{Backward pre-regeneration } &\quad  \quad \quad \text{Backward post-regeneration }
    \\    QP  + \tilde T B  = \Id_\mathcal G  &\quad \quad \quad  SR +  B \tilde T   = \Id_{  \mathcal H}
  \end{align*}
\begin{theorem} \label{th:classes-2gold}
Two-sided  golden transmutations share the properties of a equivalence relation. More precisely, we have the following. 
Any $A$  is a  two-sided golden transmutation of itself by philosophers' stone $(\Id,\Id,\Id,\Id,0,0 )$. 
  If $A$  is a  two-sided golden transmutation of $B$ by  $(P,Q,R,S,T,\tilde T )$ then 
   $B$  is a  two-sided golden transmutation of $A$ by  $(Q,P,S,R,\tilde T,T)$.
     If $A$  is a  two-sided golden transmutation of $B$ by  $(P_1,Q_1,R_1,S_1,T_1, \tilde T_1 )$
 and $B$ is a  two-sided golden transmutation of $C$ by  $({ P}_2,{ Q }_2,{ R }_2,{ S }_2,{ T }_2,\tilde T_2)$, 
 then   $A$ is   a  two-sided golden transmutation of  $C$ by  $\big(P_1 P_2,Q_2 Q_1,R_1  R_2, S_2 S_1,T_1+P_1  T_2 S_1, \tilde T_2 + Q_2 \tilde T_1 R_2 \big)$.
\end{theorem}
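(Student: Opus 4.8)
The plan is to verify the three assertions directly from Definition \ref{trans}, treating each of the six B\'ezout/commutativity identities separately. For the \textbf{reflexivity} statement, with $(P,Q,R,S,T,\tilde T)=(\Id,\Id,\Id,\Id,0,0)$ all six equations of Definition \ref{trans} reduce to tautologies: $A\Id=\Id A$, $B\Id=\Id B$, $\Id\Id+0\cdot A=\Id$, and so on. For the \textbf{symmetry} statement, one simply re-reads the six identities for the tuple $(P,Q,R,S,T,\tilde T)$ and observes that, under the substitution $(P,Q,R,S,T,\tilde T)\mapsto(Q,P,S,R,\tilde T,T)$ together with swapping the roles of $A$ and $B$, the list of six identities is mapped bijectively onto itself: post-catalysis $AP=RB$ becomes pre-catalysis $BQ=SA$ for the pair $(B,A)$; pre-regeneration $PQ+TA=\Id_{\mathcal E}$ becomes backward pre-regeneration $QP+\tilde T B=\Id_{\mathcal G}$ read as forward pre-regeneration for $(B,A)$; post-regeneration $RS+AT=\Id_{\mathcal F}$ becomes backward post-regeneration for $(B,A)$; and the two backward identities for $(A,B)$ become the two forward ones for $(B,A)$. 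Hence all six hold and $B$ is a two-sided golden transmutation of $A$ by $(Q,P,S,R,\tilde T,T)$.

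For the \textbf{transitivity} statement the strategy is a sequence of six short algebraic verifications, each using the corresponding pair of hypotheses for the two given transmutations. I would set $P:=P_1P_2$, $Q:=Q_2Q_1$, $R:=R_1R_2$, $S:=S_2S_1$, $T:=T_1+P_1T_2S_1$, $\tilde T:=\tilde T_2+Q_2\tilde T_1R_2$, and check: (i) $AP=AP_1P_2=R_1BP_2=R_1R_2C=RB$, using post-catalysis for $(A,B)$ then for $(B,C)$; (ii) symmetrically $BQ=SA$ via the two pre-catalysis identities; (iii) post-regeneration, $RS+AT=R_1R_2S_2S_1+A T_1+A P_1 T_2 S_1 = R_1(R_2S_2)S_1+(\Id_{\mathcal F}-R_1S_1)+R_1(BT_2)S_1$; substituting $R_2S_2=\Id_{\mathcal H}-BT_2$ makes the $R_1BT_2S_1$ terms cancel and leaves $R_1S_1+\Id_{\mathcal F}-R_1S_1=\Id_{\mathcal F}$; (iv) pre-regeneration $PQ+TA=\Id_{\mathcal E}$ by the mirror computation, using $AP_1$ versus $Q_2Q_1$ carefully — one substitutes $Q_2P_2$-type identities from the $(B,C)$ stone and $P_1Q_1$-type from the $(A,B)$ stone; (v) and (vi) the two backward identities, which by the symmetry statement already proved are just the forward identities for the reverse composition, hence follow from (iii)–(iv) applied to $C\to B\to A$. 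In fact I would phrase (v)–(vi) exactly that way: apply the symmetry assertion to each hypothesis to get two-sided golden transmutations of $A$ by $(Q_1,P_1,\dots)$ and of $B$ by $(Q_2,P_2,\dots)$, compose them via (i)–(iv) to obtain a two-sided golden transmutation of $A$ by a composed stone, and read off that the new $\tilde T$ and the two backward equations match.

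The only computation requiring genuine care is the algebraic identity for the new $T$ (and dually $\tilde T$): one must expand $RS+AT$, insert the first-level identities $R_2S_2=\Id-BT_2$ and $R_1S_1+AT_1=\Id$, and see the cross terms $R_1BT_2S_1$ cancel against $AP_1T_2S_1=R_1BT_2S_1$ (the last equality being post-catalysis for the $(A,B)$ pair). This is the step where the precise form $T_1+P_1T_2S_1$ is forced, and it is essentially the same bookkeeping as in the proof of Proposition \ref{prop-elem3}; I would import that computation almost verbatim. Everything else is formal substitution, so I expect the write-up to be short, with the main risk being an index slip between the $P_1,Q_1,\dots$ layer and the $P_2,Q_2,\dots$ layer rather than any conceptual obstacle.

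\begin{proof}
The reflexivity and symmetry statements are immediate verifications: for $(\Id,\Id,\Id,\Id,0,0)$ each of the six identities in Definition \ref{trans} is a tautology, and under the substitution $(P,Q,R,S,T,\tilde T)\mapsto(Q,P,S,R,\tilde T,T)$ together with the exchange of $A$ and $B$ the six identities are permuted among themselves, post-catalysis and pre-catalysis being swapped, and each (backward) (pre/post)-regeneration identity for $(A,B)$ becoming the corresponding identity for $(B,A)$. Hence if $A$ is a two-sided golden transmutation of $B$ by $(P,Q,R,S,T,\tilde T)$ then $B$ is one of $A$ by $(Q,P,S,R,\tilde T,T)$.

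For transitivity, set $P:=P_1P_2$, $Q:=Q_2Q_1$, $R:=R_1R_2$, $S:=S_2S_1$, $T:=T_1+P_1T_2S_1$ and $\tilde T:=\tilde T_2+Q_2\tilde T_1R_2$. Post-catalysis holds since
\[
AP=AP_1P_2=R_1BP_2=R_1R_2C=RB,
\]
and pre-catalysis $BQ=SA$ follows by the mirror computation from $BQ_1=S_1A$ and $CQ_2=S_2B$. For post-regeneration we compute, using $R_1S_1+AT_1=\Id_{\mathcal F}$, then $R_2S_2=\Id_{\mathcal H}-BT_2$, then post-catalysis $AP_1=R_1B$,
\[
RS+AT=R_1R_2S_2S_1+AT_1+AP_1T_2S_1=R_1(\Id_{\mathcal H}-BT_2)S_1+AT_1+R_1BT_2S_1=R_1S_1+AT_1=\Id_{\mathcal F}.
\]
Pre-regeneration $PQ+TA=\Id_{\mathcal E}$ follows by the same manipulation applied on the other side: using $P_1Q_1+T_1A=\Id_{\mathcal E}$, then $P_2Q_2=\Id_{\mathcal G}-T_2B$, then pre-catalysis $S_1A=BQ_1$,
\[
PQ+TA=P_1P_2Q_2Q_1+T_1A+P_1T_2S_1A=P_1(\Id_{\mathcal G}-T_2B)Q_1+T_1A+P_1T_2BQ_1=P_1Q_1+T_1A=\Id_{\mathcal E}.
\]
Finally, by the symmetry statement already proved, the two backward identities $QP+\tilde TB=\Id_{\mathcal G}$ and $SR+B\tilde T=\Id_{\mathcal H}$ are exactly the pre- and post-regeneration identities for the composition in the reverse order; applying the symmetry statement to each hypothesis gives two-sided golden transmutations of $A$ by $(Q_1,P_1,S_1,R_1,\tilde T_1,T_1)$ and of $B$ by $(Q_2,P_2,S_2,R_2,\tilde T_2,T_2)$, and the computation of post- and pre-regeneration just performed, applied to this reversed pair, yields precisely $QP+\tilde TB=\Id_{\mathcal G}$ and $SR+B\tilde T=\Id_{\mathcal H}$ with $\tilde T=\tilde T_2+Q_2\tilde T_1R_2$. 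Thus $A$ is a two-sided golden transmutation of $C$ by $(P_1P_2,Q_2Q_1,R_1R_2,S_2S_1,T_1+P_1T_2S_1,\tilde T_2+Q_2\tilde T_1R_2)$.
\end{proof}
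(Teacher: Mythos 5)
Your proof is correct and takes the natural route, which is also the one the paper uses in the analogous Proposition \ref{prop-elem3}: verify the forward identities directly and reduce the backward ones to the forward ones via the symmetry statement. The key computation — inserting $R_2S_2=\Id-BT_2$ (resp.\ $P_2Q_2=\Id-T_2B$) into $RS+AT$ (resp.\ $PQ+TA$) and cancelling the cross term using $AP_1=R_1B$ (resp.\ $BQ_1=S_1A$) — is the same bookkeeping as in the silvern case and is carried out correctly.

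One small notational slip in the last paragraph: since you are establishing that $A$ is a two-sided golden transmutation of $C$, the two backward identities should read $QP+\tilde T C=\Id_{\tilde{\mathcal G}}$ and $SR+C\tilde T=\Id_{\tilde{\mathcal H}}$, with $C$ in place of $B$ and the spaces $\tilde{\mathcal G},\tilde{\mathcal H}$ in place of $\mathcal G,\mathcal H$. You appear to have copied the generic form from Definition \ref{trans} without substituting; the argument itself is unaffected.
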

%
 
%%%%%%%%%
%
\subsection{Some examples}
The following straightforward result states that a conjugation is a two-sided golden transmutation.
 \begin{proposition} \label{prop-conj}
 If $A = PBP^{-1}$ then, for any linear mapping $T$, 
 $A$ is a two-sided  golden 
  transmutation of $B$ by  $(P,Q ,R,S,T,\tilde T)$, where
 $Q := P^{-1} - P^{-1} TA$, $ R :=P$,  $ S:= P^{-1} - B P^{-1} T$ and
 $\tilde T := P^{-1} T P$.
 \end{proposition}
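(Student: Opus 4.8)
The plan is to verify directly that the six B\'ezout-type and catalysis identities of Definition \ref{trans} hold for the proposed philosophers' stone $(P,Q,R,S,T,\tilde T)$ with $Q = P^{-1} - P^{-1}TA$, $R = P$, $S = P^{-1} - BP^{-1}T$ and $\tilde T = P^{-1}TP$, starting from the hypothesis $A = PBP^{-1}$ (equivalently $AP = PB$ and $PB P^{-1} = A$, so $B = P^{-1}AP$). First I would record the two catalysis identities, which are the easiest: post-catalysis $AP = RB$ is immediate since $R = P$ and $AP = PB$ by hypothesis; for pre-catalysis $BQ = SA$ I would expand $BQ = B P^{-1} - BP^{-1}TA$ and $SA = P^{-1}A - BP^{-1}TA$, so the claim reduces to $BP^{-1} = P^{-1}A$, which is just the hypothesis rearranged.

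Next I would treat the two forward regeneration identities. For pre-regeneration $PQ + TA = \Id_{\mathcal E}$: since $PQ = P(P^{-1} - P^{-1}TA) = \Id_{\mathcal E} - TA$, adding $TA$ gives $\Id_{\mathcal E}$, so this holds for \emph{any} linear map $T$, matching the statement. For post-regeneration $RS + AT = \Id_{\mathcal F}$: compute $RS = P(P^{-1} - BP^{-1}T) = \Id_{\mathcal F} - PBP^{-1}T = \Id_{\mathcal F} - AT$ using $PBP^{-1} = A$, so $RS + AT = \Id_{\mathcal F}$, again for any $T$. Finally the two backward regeneration identities: for backward pre-regeneration $QP + \tilde T B = \Id_{\mathcal G}$, I expand $QP = (P^{-1} - P^{-1}TA)P = \Id_{\mathcal G} - P^{-1}TAP = \Id_{\mathcal G} - P^{-1}TPB$ using $AP = PB$, and $\tilde T B = P^{-1}TPB$, so the sum telescopes to $\Id_{\mathcal G}$. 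For backward post-regeneration $SR + B\tilde T = \Id_{\mathcal H}$, I compute $SR = (P^{-1} - BP^{-1}T)P = \Id_{\mathcal H} - BP^{-1}TP$ and $B\tilde T = BP^{-1}TP$, which again sum to $\Id_{\mathcal H}$.

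There is essentially no obstacle here: the result is genuinely "straightforward" as advertised, and the only thing to be careful about is bookkeeping of which identity from $A = PBP^{-1}$ gets used where ($AP = PB$, $PB = AP$, $BP^{-1} = P^{-1}A$, $P^{-1}A = BP^{-1}$, $PBP^{-1} = A$ — all the same fact, but applied on the correct side). The one substantive observation worth flagging is that $T$ is free: the four regeneration identities each reduce to a cancellation that does not constrain $T$, which is why the statement quantifies over all linear $T$. I would close by noting that, by Definition \ref{trans}, having verified all six identities establishes that $A$ is a two-sided golden transmutation of $B$ by $(P,Q,R,S,T,\tilde T)$, and that in particular Proposition \ref{prop-elem0} is the special case $B = A$, $P = \Id$ (with $T = 0$ giving $Q = S = \Id$, $\tilde T = 0$), consistent with the earlier self-transmutation statement.
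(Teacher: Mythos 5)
Your verification is correct: all six identities of Definition \ref{trans} check out by direct expansion using $AP = PB$ and $BP^{-1} = P^{-1}A$, and the paper itself labels this result ``straightforward'' and provides no proof, so the intended argument is exactly this routine verification. Your closing observation that $T$ is genuinely free and that Proposition \ref{prop-elem0} is the special case $P=\Id$, $T=0$ is a nice sanity check, consistent with the paper.
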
 

\medskip

Next example concerns the case where the transmutation aims at permuting the two factors of an operator. 
 \begin{proposition} \label{prop-perm}
 Let $\mathcal E$ and $\mathcal G$ some vector spaces. 
   Let  $M$ in  $L(\mathcal  E; \mathcal G)$
   and $L$ in  $L(\mathcal  G; \mathcal E)$. 
 Set $A:= LM$ and $B:=ML$.
  Then     $A$ is a  basic   transmutation of  $B$ by  $(L,M,L,M)$.
  Moreover  there is $T$ such that $A$ is a silvern transmutation of  $B$  by  $(L,M,L,M,T )$
  if and only $\ker  A = 0  $, and in this case  $T$ is uniquely determined on $\Im A$ by 
 $T A=  \Id - A$,  and
   $A = (\Id + T)LBM$. 
  Finally, in this case, 
   this  transmutation  is proper  if and only $\ker  L =0  $. 
 \end{proposition}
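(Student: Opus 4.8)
The plan is to verify each of the three assertions in turn, exploiting heavily the facts already established about basic, silvern and proper silvern transmutations. First I would prove that $A=LM$ is a basic transmutation of $B=ML$ by $(L,M,L,M)$. This amounts to checking post-catalysis $AP=RB$, i.e.\ $(LM)L=L(ML)$, and pre-catalysis $BQ=SA$, i.e.\ $(ML)M=M(LM)$; both are just associativity of composition, so this step is immediate. Consequently $P\ker B\subset\ker A$, $Q\ker A\subset\ker B$, etc., by Proposition \ref{prop-elem1}, which will be useful below.

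Next I would treat the silvern characterisation. By Proposition \ref{prop-post-r} a silvern transmutation is weakly post-regenerated, and more to the point, by Proposition \ref{prop-elem1} (second part), if $A$ is a silvern transmutation of $B$ by $(L,M,L,M,T)$ then $\ker A=L\ker B$ and $\ker A\cap\ker M=0$; but here $\ker A\subset\ker LM=\ker A$ contains $M$-preimages trivially, and since $A=LM$ one has $\ker A\supset\ker M$, whence $\ker A\cap\ker M=\ker M$, forcing... — let me instead argue directly. The existence of a silvern $T$ requires, by definition, pre-regeneration $PQ+TA=\Id_{\mathcal E}$, i.e.\ $LM+TA=\Id$, i.e.\ $A+TA=\Id$, i.e.\ $(\Id+T)A=\Id$, so $A$ must be left-invertible and in particular $\ker A=0$. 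Conversely, if $\ker A=0$, then since trivially $\ker A\subset\ker(\Id-A)$ (both sides being $\{0\}$ on the left), Lemma \ref{rinverse} furnishes $T\in L(\mathcal F;\mathcal E)$ with $TA=\Id-A$, uniquely determined on $\Im A$; together with the basic transmutation already shown, $(L,M,L,M,T)$ is then a silvern transmutation. The identity $A=(\Id+T)LBM$ follows by substituting $B=ML$: $(\Id+T)LBM=(\Id+T)L(ML)M=(\Id+T)(LM)(LM)=(\Id+T)A\cdot A=\Id\cdot A=A$, using $(\Id+T)A=\Id$.

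Finally, for the properness claim: by Definition \ref{proper}, the silvern transmutation $(L,M,L,M,T)$ is proper iff $\ker B\cap\ker P=0$, i.e.\ $\ker(ML)\cap\ker L=0$. But $\ker L\subset\ker(ML)$ always, so $\ker(ML)\cap\ker L=\ker L$, and this vanishes iff $\ker L=0$. This is the whole argument, and it is short; I do not anticipate a genuine obstacle, the only point requiring a little care being to phrase the ``only if'' direction of the silvern statement so that it is clear the pre-regeneration equation alone already forces $\ker A=0$, and to confirm that the $T$ produced by Lemma \ref{rinverse} is the one satisfying the stated formula $TA=\Id-A$ and is pinned down on $\Im A$.
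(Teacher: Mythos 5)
Your proof is correct and follows essentially the same route as the paper's: associativity for the basic transmutation, the pre-regeneration equation $(\Id+T)A=\Id$ to force $\ker A=0$ together with Lemma~\ref{rinverse} to produce $T$ in the converse direction, the substitution $(\Id+T)LBM=(\Id+T)A^2=A$, and the observation $\ker L\subset\ker(ML)$ for properness. The detour through Proposition~\ref{prop-elem1} was wisely abandoned, since $\ker A\cap\ker M=0$ combined with $\ker M\subset\ker A$ only yields $\ker M=0$, which is strictly weaker than $\ker A=0$.
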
 

 Before to proceed to the proof, let us illustrate the result with the following diagrams.
\begin{align*}
&\xymatrix{
 \mathcal E    \ar@<2pt>[r]^M  \ar@<2pt>[d]_L & \mathcal G  \ar@<2pt>[r]^L  & \mathcal E  \ar@<2pt>[d]_L   \\
   \mathcal G   \ar@<2pt>[r]^L  \ar@<2pt>[u]_M  &  \mathcal E  \ar@<2pt>[r]^M  &  \mathcal G  \ar@<2pt>[u]_M 
  }  &\quad \quad\quad\quad\quad\quad
   \xymatrix{
 \mathcal E    \ar@<2pt>[r]^A  \ar@<2pt>[d]_L&  \Im A      \ar@<2pt>[l]^T \\   
   \mathcal G     \ar@<2pt>[u]_M &  
  }  
   \\ & \quad \quad \text{Catalysis } &\quad \quad\quad\quad\quad\quad  \text{Pre-regeneration }
\end{align*}
 \begin{proof}
First, $A L = LML = LB$ and $BM =  MLM = MA$ so that  $A$ is a  basic   transmutation of  $B$ by  $(L,M,L,M)$.
  Next, by Lemma \ref{rinverse}, there is $T$   such that $TA= \Id - A$ if and only $\ker  A \subset \ker ( \Id - A)$ that is 
   if and only $\ker  A = 0  $, and in this case  $T$ is uniquely determined on $\Im A$ by 
 $T A=  \Id - A$. 
   Moreover, still in this case, 
   $ (\Id + T)LBM =  (\Id + T)A^2 =A$, 
so that  $A$ is the only silvern  transmutation of $B$ by  philosopher's stone $( L,M,L,M,T )$. 
  Finally, in this case,  this  transmutation  is proper  by definition if and only $\ker ML \cap \ker  L =0  $, that is    if and only $\ker  L =0  $. 
\end{proof}
%

%%%%%%%%%%%%%%%%%%%%
\subsection{Degrees of freedom}
\label{sub-free}

  The  following straightforward result emphazises the 
   degrees of freedom in the philosopher's stone  
  in the case where $A$ and $B$ are  invertible.

 \begin{proposition} \label{prop-invert}
 Assume that  $A$  and  $B$ are invertible.
 Then for any linear mappings $R$ and $S$, 
   $A$  is a  two-sided  golden  transmutation of $B$ by  $(P,Q,R,S,T,\tilde T)$ 
 with   $P:= A^{-1}  RB$, $Q:= B^{-1}  SA$, 
 $T = A^{-1}   - A^{-1}  R S $ and $\tilde T := B^{-1}   - B^{-1}  SR $.
 \end{proposition}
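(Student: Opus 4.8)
The plan is to verify the six defining identities of a two-sided golden transmutation directly from the proposed formulas, using only that $A$ and $B$ are invertible and that $A=PBP^{-1}$ is \emph{not} assumed here, so all six relations must be checked from scratch. Concretely, with $P:=A^{-1}RB$, $Q:=B^{-1}SA$, $T:=A^{-1}-A^{-1}RS$ and $\tilde T:=B^{-1}-B^{-1}SR$, I would substitute into each of the expressions $AP$, $BQ$, $PQ+TA$, $RS+AT$, $QP+\tilde TB$ and $SR+B\tilde T$ and simplify.

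First I would handle the two catalysis identities, which are immediate: $AP=A\,A^{-1}RB=RB$ and $BQ=B\,B^{-1}SA=SA$, so post-catalysis and pre-catalysis hold with no hypothesis beyond the definitions of $P$ and $Q$. Next I would do the post-regeneration: $RS+AT=RS+A(A^{-1}-A^{-1}RS)=RS+\Id_{\mathcal F}-RS=\Id_{\mathcal F}$, and symmetrically $SR+B\tilde T=SR+B(B^{-1}-B^{-1}SR)=\Id_{\mathcal H}$, giving backward post-regeneration. These four use only the invertibility of $A$ (resp.\ $B$) to cancel $AA^{-1}$ (resp.\ $BB^{-1}$).

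The remaining two identities, pre-regeneration $PQ+TA=\Id_{\mathcal E}$ and backward pre-regeneration $QP+\tilde TB=\Id_{\mathcal G}$, are the only ones with any content. For the first, $PQ=A^{-1}RB\,B^{-1}SA=A^{-1}RSA$ and $TA=(A^{-1}-A^{-1}RS)A=\Id_{\mathcal E}-A^{-1}RSA$, so $PQ+TA=A^{-1}RSA+\Id_{\mathcal E}-A^{-1}RSA=\Id_{\mathcal E}$; here one uses $BB^{-1}=\Id_{\mathcal H}$ in the middle and then the cancellation is purely formal. Likewise $QP=B^{-1}SA\,A^{-1}RB=B^{-1}SRB$ and $\tilde TB=\Id_{\mathcal G}-B^{-1}SRB$, whence $QP+\tilde TB=\Id_{\mathcal G}$, using $AA^{-1}=\Id_{\mathcal F}$. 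Thus all six relations of Definition~\ref{trans} hold, so $A$ is a two-sided golden transmutation of $B$ by $(P,Q,R,S,T,\tilde T)$.

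There is essentially no obstacle: the statement is a bookkeeping exercise, and the only thing to watch is that the cancellations $AA^{-1}$ and $BB^{-1}$ always occur between the appropriate factors so that the correct identity operator ($\Id_{\mathcal E}$, $\Id_{\mathcal F}$, $\Id_{\mathcal G}$ or $\Id_{\mathcal H}$) appears on each line. If one wanted a conceptual shortcut, one could note that the particular choice $R=S=\Id$ reduces to a special case of Proposition~\ref{prop-conj} (conjugation by $P=A^{-1}B$, after identifying the spaces), but the general $R,S$ version is quickest to verify by the direct substitution above.
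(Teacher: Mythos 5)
Your direct verification of all six identities of Definition~\ref{trans} is correct, and since the paper explicitly labels this proposition as ``straightforward'' and offers no proof, the substitution you carry out is precisely the intended argument; there is nothing to compare against. The only quibble is your closing aside: taking $R=S=\Id$ gives $P=A^{-1}B$, and then $PBP^{-1}=A^{-1}BA$, which equals $A$ only when $A=B$, so this choice does \emph{not} reduce to Proposition~\ref{prop-conj} in general. You flag that remark as optional and not load-bearing, and indeed it has no effect on the proof, which stands as written.
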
 

  Let us also mention the following observations on dilatations of philosopher's stone
  in the general case. We set, for $\mu$ and $\lambda$  in $\R$, 
  $$ D_{\mu,\lambda} : (P,Q,R,S) \mapsto (\mu P, \lambda  Q,\mu R,\lambda S) .$$
 \begin{proposition} \label{prop-dilatation}
 Assume that $A$ is  a basic transmutation of $B$ by  $(P,Q,R,S)$. 
 Then for any $\lambda$ and $\mu$ in $\R$,  $A$ is  a basic transmutation of $B$ by  $D_{\mu,\lambda}  (P,Q,R,S)$.
 Assume that $A$ is  a silvern (respectively bronze, golden)  transmutation of $B$ by  philosopher's stone $(P,Q,R,S,T)$.
 Then for any  $\lambda$ in  $\R^*$, setting $\mu := \lambda^{-1}$,  
  $A$ is  a silvern (respectively bronze, golden)  transmutation of $B$ by  $(D_{\mu,\lambda}  (P,Q,R,S),T)$.
 \end{proposition}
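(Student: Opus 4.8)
The plan is to verify each of the defining identities directly, exploiting the scaling structure of the dilatation map $D_{\mu,\lambda}$ together with the single constraint $\mu\lambda = 1$ in the regenerated cases. First I would treat the basic transmutation claim, which is the easiest: if $AP = RB$ and $BQ = SA$, then multiplying the first identity by $\mu$ gives $A(\mu P) = (\mu R)B$ and multiplying the second by $\lambda$ gives $B(\lambda Q) = (\lambda S)A$, so $A$ is a basic transmutation of $B$ by $(\mu P,\lambda Q,\mu R,\lambda S) = D_{\mu,\lambda}(P,Q,R,S)$ for arbitrary scalars $\mu,\lambda$. This uses only linearity of the operators, no invertibility.

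Next I would handle the silvern case. By definition $A$ is a basic transmutation of $B$ by $(P,Q,R,S)$ and is pre-regenerated by $(P,Q,T)$, i.e. $PQ + TA = \Id_{\mathcal E}$. The basic part transfers to $D_{\mu,\lambda}(P,Q,R,S)$ by the previous paragraph. For the pre-regeneration, observe that $(\mu P)(\lambda Q) + TA = \mu\lambda\, PQ + TA$, and since $\mu := \lambda^{-1}$ we have $\mu\lambda = 1$, so this equals $PQ + TA = \Id_{\mathcal E}$; hence $A$ is pre-regenerated by $(\mu P,\lambda Q,T)$, keeping the same $T$. Combining, $A$ is a silvern transmutation of $B$ by $(D_{\mu,\lambda}(P,Q,R,S),T)$.

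For the bronze case one instead uses the post-regeneration identity $RS + AT = \Id_{\mathcal F}$; under the dilatation this becomes $(\mu R)(\lambda S) + AT = \mu\lambda\, RS + AT = RS + AT = \Id_{\mathcal F}$, again using $\mu\lambda = 1$, so post-regeneration is preserved with the same $T$. Together with the basic part this gives the bronze statement. The golden case is then immediate: a golden transmutation is simultaneously silvern and bronze, and both the pre- and post-regeneration identities survive the rescaling precisely because $\mu\lambda = 1$, so all four defining identities (post-catalysis, pre-catalysis, pre-regeneration, post-regeneration) hold for $(D_{\mu,\lambda}(P,Q,R,S),T)$.

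I do not anticipate a genuine obstacle here; the only subtlety worth flagging is the reason the constraint $\mu\lambda = 1$ (equivalently $\mu = \lambda^{-1}$) is imposed in the regenerated cases but not in the plain basic case: the catalysis identities are homogeneous of degree one in $P$ and in $Q$ separately and so tolerate independent scalings, whereas the regeneration identities contain the product $PQ$ (or $RS$), which is homogeneous of degree one in the pair, forcing $\mu\lambda = 1$ if $T$ is to be left unchanged. One should also note $\lambda \in \R^*$ is needed merely so that $\mu := \lambda^{-1}$ makes sense. No use of the axiom of choice, of Lemma \ref{rinverse}, or of any properness hypothesis is required.
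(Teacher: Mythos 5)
Your proof is correct and is exactly the direct verification the paper has in mind when it calls the result ``straightforward'' (no proof is given in the paper). The observation that the catalysis identities are separately homogeneous in $P$ and $Q$, while the regeneration identities involve the products $PQ$ and $RS$ and hence force $\mu\lambda=1$ once $T$ is held fixed, is precisely the right way to understand why the hypotheses split between the two parts.
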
 

 The following straightforward result emphasizes another possible degree of freedom which may be thought as a gauge invariance in the philosopher's stone.
 As all along the paper it is understood that the operators at stake are defined on the appropriate spaces for the different compositions  to make sense.
 \begin{proposition} \label{prop-gauge}
 Assume that $A$ is  a silvern (respectively bronze) transmutation of $B$ by  $(P,Q,R,S,T)$.
 Let $(\hat Q,\hat S,\hat T)$ such that 
 \begin{equation}
  \label{gequa}
B\hat  Q  = \hat  S A \quad   \text{ and  } \quad  P \hat Q  = -\hat  TA  \, \text{ (resp.  }  R \hat  S = - A  \hat  T).
\end{equation}
Then,  $A$ is  a silvern (resp. bronze) transmutation of $B$ by  $(P,Q+\hat  Q ,R,S+ \hat S,T+\hat T)$.
 Conversely, if $A$ is  a silvern (resp. bronze) transmutation of $B$ by  $(P,Q,R,S,T)$ and also by $(P,Q+\hat  Q ,R,S+ \hat S,T+\hat T)$, then   \eqref{gequa} holds true.
  \end{proposition}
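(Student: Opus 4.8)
The plan is to verify Proposition~\ref{prop-gauge} by direct algebraic manipulation, treating the silvern and bronze cases in parallel since they differ only in which regeneration identity is being perturbed. Recall that a silvern transmutation by $(P,Q,R,S,T)$ amounts to the three identities $AP=RB$, $BQ=SA$ and $PQ+TA=\Id_{\mathcal E}$; replacing $(Q,S,T)$ by $(Q+\hat Q,\,S+\hat S,\,T+\hat T)$ leaves the post-catalysis $AP=RB$ untouched, so only two identities need to be re-examined.

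\textbf{Forward direction.} First I would check the pre-catalysis: $B(Q+\hat Q)=BQ+B\hat Q=SA+\hat S A=(S+\hat S)A$, using $BQ=SA$ and the first hypothesis in \eqref{gequa}. Next the pre-regeneration: $P(Q+\hat Q)+(T+\hat T)A=\big(PQ+TA\big)+\big(P\hat Q+\hat T A\big)=\Id_{\mathcal E}+0=\Id_{\mathcal E}$, using the original identity and the second hypothesis $P\hat Q=-\hat T A$. This establishes that $A$ is a pre-transmutation of $B$ by $(P,Q+\hat Q,S+\hat S,T+\hat T)$, and since the basic transmutation part is preserved, $A$ is a silvern transmutation by $(P,Q+\hat Q,R,S+\hat S,T+\hat T)$. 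For the bronze case the pre-catalysis computation is identical; in place of the pre-regeneration one checks the post-regeneration $R(S+\hat S)+A(T+\hat T)=\big(RS+AT\big)+\big(R\hat S+A\hat T\big)=\Id_{\mathcal F}+0=\Id_{\mathcal F}$, using the bronze hypothesis $R\hat S=-A\hat T$.

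\textbf{Converse direction.} Here I would subtract the two sets of identities. Suppose $A$ is silvern by both $(P,Q,R,S,T)$ and $(P,Q+\hat Q,R,S+\hat S,T+\hat T)$. Subtracting the two pre-catalysis identities $BQ=SA$ and $B(Q+\hat Q)=(S+\hat S)A$ gives $B\hat Q=\hat S A$, the first equation in \eqref{gequa}. Subtracting the two pre-regeneration identities $PQ+TA=\Id_{\mathcal E}$ and $P(Q+\hat Q)+(T+\hat T)A=\Id_{\mathcal E}$ gives $P\hat Q+\hat T A=0$, i.e.\ $P\hat Q=-\hat T A$, the second equation. In the bronze case the post-regeneration identities subtract to give $R\hat S+A\hat T=0$, i.e.\ $R\hat S=-A\hat T$.

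There is essentially no obstacle here: the statement is deliberately elementary and every step is a one-line linear-algebra computation, the whole point being to exhibit the additive structure of the set of admissible corrections $(\hat Q,\hat S,\hat T)$. The only thing worth a word of care is the standing convention, already flagged in the statement, that all compositions are between operators acting on matching spaces; once that is granted, the cancellations are formal and the proof is complete.
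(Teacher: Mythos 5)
Your proof is correct and is the natural verification the paper alludes to when it calls the result ``straightforward'' (no explicit proof is given there). Both directions reduce, as you say, to adding or subtracting the defining identities term by term, and you have checked all three relevant identities (post-catalysis unchanged, pre-catalysis, and the appropriate regeneration identity) in both the silvern and bronze cases.
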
 
 Observe that the two gauge equations in   \eqref{gequa} are linear. 
 In the case of two-sided  golden  transmutations five gauge equations are required. 
 \begin{proposition} \label{prop-gauge-2-sided}
 Assume that $A$ is  a two-sided  golden  transmutation of $B$ by  $(P,Q,R,S,T, \tilde T)$.
 Let $(\hat Q,\hat S,\hat T, \check T)$ such that 
 \begin{equation}
  \label{gequagold}
B\hat  Q  = \hat  S A , \,   P \hat Q  = -\hat  TA , \,    R \hat  S = - A  \hat  T   , \,    \hat Q P = -   \check  T  B\,    \text{ and  } \,    \hat  S R = - B  \check  T  .
\end{equation}
Then,  $A$ is  a two-sided  golden  transmutation of $B$ by  $(P,Q+\hat  Q ,R,S+ \hat S,T+\hat T, \tilde T + \check T)$.
 Conversely, if $A$ is  a two-sided  golden transmutation of $B$ by  $(P,Q,R,S,T, \tilde T)$ and also by $(P,Q+\hat  Q ,R,S+ \hat S,T+\hat T,\tilde T +   \check  T )$, then   \eqref{gequagold} holds true.
 \end{proposition}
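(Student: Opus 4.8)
The plan is to verify the two directions by direct computation, treating the forward implication (stability under gauge shifts) first and then the converse. For the forward direction, suppose $A$ is a two-sided golden transmutation of $B$ by $(P,Q,R,S,T,\tilde T)$ and that $(\hat Q,\hat S,\hat T,\check T)$ satisfies \eqref{gequagold}. I would go through the six defining identities of Definition \ref{trans} one at a time, substituting $Q \to Q+\hat Q$, $S \to S+\hat S$, $T \to T+\hat T$, $\tilde T \to \tilde T + \check T$ and checking that the new terms cancel. The post-catalysis identity $AP = RB$ involves none of the shifted operators, so it is unchanged. The pre-catalysis identity becomes $B(Q+\hat Q) = BQ + B\hat Q = SA + \hat S A = (S+\hat S)A$, using the first equation of \eqref{gequagold}. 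The pre-regeneration identity becomes $P(Q+\hat Q) + (T+\hat T)A = PQ + TA + P\hat Q + \hat T A = \Id_{\mathcal E} + 0$ by the second equation of \eqref{gequagold}. The post-regeneration identity becomes $R(S+\hat S) + A(T+\hat T) = RS + AT + R\hat S + A\hat T = \Id_{\mathcal F} + 0$ by the third equation. The backward pre-regeneration becomes $(Q+\hat Q)P + (\tilde T + \check T)B = QP + \tilde T B + \hat Q P + \check T B = \Id_{\mathcal G} + 0$ by the fourth equation. The backward post-regeneration becomes $(S+\hat S)R + B(\tilde T + \check T) = SR + B\tilde T + \hat S R + B\check T = \Id_{\mathcal H} + 0$ by the fifth equation. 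Hence all six identities hold for the shifted stone, which is precisely the claim.

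For the converse, I assume that $A$ is a two-sided golden transmutation of $B$ both by $(P,Q,R,S,T,\tilde T)$ and by $(P,Q+\hat Q,R,S+\hat S,T+\hat T,\tilde T+\check T)$. I then subtract the corresponding pairs of identities. Subtracting the two pre-catalysis identities $BQ = SA$ and $B(Q+\hat Q) = (S+\hat S)A$ gives $B\hat Q = \hat S A$. Subtracting the two pre-regeneration identities gives $P\hat Q + \hat T A = 0$, i.e.\ $P\hat Q = -\hat T A$. Subtracting the two post-regeneration identities gives $R\hat S + A\hat T = 0$, i.e.\ $R\hat S = -A\hat T$. Subtracting the two backward pre-regeneration identities gives $\hat Q P + \check T B = 0$, i.e.\ $\hat Q P = -\check T B$. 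Subtracting the two backward post-regeneration identities gives $\hat S R + B\check T = 0$, i.e.\ $\hat S R = -B\check T$. These are exactly the five equations of \eqref{gequagold}, so the converse holds.

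Since both implications reduce to elementary linear algebra on the compositions, there is no genuine obstacle here; the only thing to be careful about is bookkeeping — making sure each of the six defining identities of a two-sided golden transmutation is matched against the right shift, and that no shifted operator is accidentally introduced into the post-catalysis identity $AP = RB$, which must remain untouched (this is why $P$ and $R$ are not shifted). This mirrors exactly the structure of the proof of Proposition \ref{prop-gauge}, now with the two extra backward identities accounted for by the additional unknown $\check T$ and the two additional gauge equations. One may also remark, as the surrounding text does, that the five equations in \eqref{gequagold} are linear in $(\hat Q,\hat S,\hat T,\check T)$, so the admissible gauge shifts form a linear space, but this observation is not needed for the proof of the proposition itself.
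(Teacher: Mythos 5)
Your proof is correct and takes the same straightforward approach the paper has in mind (the paper omits the argument as "straightforward"): for the forward direction you check each of the six defining identities of a two-sided golden transmutation for the shifted stone, isolating the cross terms and killing them with the five equations in \eqref{gequagold}, and for the converse you subtract the corresponding pairs of identities, which is the standard linearity argument for gauge-type results. The bookkeeping is complete and each equation of \eqref{gequagold} is matched to the right identity, so there is nothing to add.
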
 
 A similar result 
 holds for a shift of $(P,R,T)$, rather than $(Q,S,T)$. 
 \begin{proposition} \label{prop-gauge-2}
 Assume that $A$ is  a silvern  (respectively bronze) transmutation of $B$ by  $(P,Q,R,S,T)$.
 Let $(\hat P,\hat R,\hat T)$ such that 
 \begin{equation}
  \label{gequa2}
A\hat  P  = \hat  R  B \quad   \text{ and  } \quad  \hat  P Q  = -\hat  TA  \, \text{ (Resp.  }  \hat R  S = - A  \hat  T).
\end{equation}
Then,  $A$ is  a silvern  (resp. bronze) transmutation of $B$ by  $(P+\hat  P,Q ,R+ \hat R,S,T+\hat T)$.
 Conversely, if $A$ is  a silvern  (resp. bronze) transmutation of $B$ by  $(P,Q,R,S,T)$ and also by $(P+\hat  P,Q ,R+ \hat R,S,T+\hat T)$, then   \eqref{gequa2} holds true. 
  \end{proposition}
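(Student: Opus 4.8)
The plan is to verify the two claims of Proposition \ref{prop-gauge-2} by direct substitution into the defining identities of a silvern (respectively bronze) transmutation, exactly as in the proof of Proposition \ref{prop-gauge}, but now perturbing $(P,R,T)$ instead of $(Q,S,T)$. Recall that a silvern transmutation of $B$ by $(P,Q,R,S,T)$ requires precisely the three identities $AP=RB$, $BQ=SA$ and $PQ+TA=\Id_{\mathcal E}$, while a bronze transmutation requires instead $AP=RB$, $BQ=SA$ and $RS+AT=\Id_{\mathcal F}$.

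For the forward direction in the silvern case, I would start from the hypothesis that $A$ is a silvern transmutation of $B$ by $(P,Q,R,S,T)$ together with \eqref{gequa2}, and check the three identities for the perturbed stone $(P+\hat P,Q,R+\hat R,S,T+\hat T)$. The post-catalysis becomes $A(P+\hat P)=AP+A\hat P=RB+\hat R B=(R+\hat R)B$, using the first equation of \eqref{gequa2}. The pre-catalysis $BQ=SA$ is untouched since $Q$ and $S$ are unchanged. The pre-regeneration becomes $(P+\hat P)Q+(T+\hat T)A=PQ+TA+\hat P Q+\hat T A=\Id_{\mathcal E}+\hat P Q+\hat T A=\Id_{\mathcal E}$, using the second equation of \eqref{gequa2}, namely $\hat P Q=-\hat T A$. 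For the bronze case, the post-catalysis and pre-catalysis go through identically, and the post-regeneration becomes $(R+\hat R)S+A(T+\hat T)=RS+AT+\hat R S+A\hat T=\Id_{\mathcal F}+\hat R S+A\hat T=\Id_{\mathcal F}$, using the bronze variant $\hat R S=-A\hat T$ of \eqref{gequa2}.

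For the converse, I would subtract the two sets of defining identities. If $A$ is a silvern transmutation by both $(P,Q,R,S,T)$ and $(P+\hat P,Q,R+\hat R,S,T+\hat T)$, then subtracting the post-catalysis identities gives $A\hat P=\hat R B$, and subtracting the pre-regeneration identities gives $\hat P Q+\hat T A=0$, i.e. $\hat P Q=-\hat T A$; these are exactly the two equations in \eqref{gequa2}. In the bronze case, subtracting the post-catalysis identities again gives $A\hat P=\hat R B$, and subtracting the post-regeneration identities gives $\hat R S+A\hat T=0$, i.e. $\hat R S=-A\hat T$, which is the bronze variant. So both directions reduce to one-line linear computations.

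There is essentially no hard step here: the whole point is that the defining conditions of a transmutation are affine in the philosopher's stone, so shifting $(P,R,T)$ by $(\hat P,\hat R,\hat T)$ changes each identity by a term which is linear in the shift, and \eqref{gequa2} is precisely the statement that these correction terms vanish. The only minor care needed is bookkeeping: making sure that in the silvern case one uses the pre-regeneration identity (and hence only the first two components of \eqref{gequa2}), while in the bronze case one uses the post-regeneration identity (and hence the parenthetical variant of the second component), and that the catalysis identity $AP=RB$ appears in both. I would present the proof as a compact verification, flagging the silvern and bronze cases in parallel, in the same style as the proof of Proposition \ref{prop-gauge}.
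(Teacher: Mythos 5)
Your proof is correct and is exactly the straightforward verification that the paper implies (the paper states this proposition without an explicit proof, remarking only that it is analogous to Proposition \ref{prop-gauge}). You correctly identify the three defining identities of a silvern, respectively bronze, transmutation, check that the shift $(\hat P,\hat R,\hat T)$ preserves them under \eqref{gequa2}, and obtain the converse by subtracting the two sets of identities.
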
 
 %

%%%%%%%%%%%%%%%%%%%%
\subsection{Algebraic operations}

Composing operators in transmutations requires many compatibilities, as illustrated by the following result.
 \begin{proposition} \label{prop-elem4}
    If for $i=1,2$, 
    $A_i$  is a silvern  transmutation of $B_i$ by philosophers' stone  $(P_i,Q_i,R_i,S_i,T_i)$ 
and if there are some linear mappings $ \hat R_2$,  $ \check R_2$,  $ \hat S_2$,  $ \check S_2$ such that 
$P_1 \hat R_2  = R_2$ with $B_1 \hat R_2 =\check   R_2 B_1$, 
$Q_1 \hat S_2  = S_2$, with $A_1 \hat S_2 =\check   S_2 A_1$ and $\ker A_1 \subset \ker T_2$
 then   there exists $T$, uniquely determined on $\Im A_1$, 
  such that 
 $A_1 A_2$ is   a silvern  transmutation of  $B_1 B_2$ by  $(P_2,Q_2,R_1  \check R_2,S_1  \check S_2,T)$.
Moreover, if the second transmutation (for $i=2$) is proper then so is the compounded one. 
 \end{proposition}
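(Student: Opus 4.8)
The plan is to verify the six defining identities of a silvern transmutation for the pair $(A_1A_2, B_1B_2)$ with philosopher's stone $(P_2,Q_2,R_1\check R_2,S_1\check S_2,T)$ for a suitable $T$, namely the post-catalysis $A_1A_2 P_2 = (R_1\check R_2) B_1 B_2$, the pre-catalysis $B_1 B_2 Q_2 = (S_1\check S_2) A_1 A_2$, and the pre-regeneration $P_2 Q_2 + T A_1 A_2 = \Id$. First I would establish the two catalysis identities by chaining the catalysis identities of the individual transmutations together with the intertwining hypotheses on $\hat R_2,\check R_2$ and $\hat S_2,\check S_2$. For post-catalysis: by the $i=2$ post-catalysis, $A_1 A_2 P_2 = A_1 R_2 B_2$; rewriting $R_2 = P_1\hat R_2$ gives $A_1 P_1 \hat R_2 B_2$; applying the $i=1$ post-catalysis $A_1 P_1 = R_1 B_1$ yields $R_1 B_1 \hat R_2 B_2$; finally $B_1 \hat R_2 = \check R_2 B_1$ gives $R_1 \check R_2 B_1 B_2$, as desired. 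The pre-catalysis identity is the mirror computation using $B_2 Q_2 = S_2 A_2$, $S_2 = Q_1\hat S_2$, $B_1 Q_1 = S_1 A_1$, and $A_1 \hat S_2 = \check S_2 A_1$.

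The substantive step is producing $T$ and verifying the pre-regeneration, which is where I expect the main obstacle to lie, since it is here that the hypothesis $\ker A_1 \subset \ker T_2$ must be invoked. My strategy is to first show $\ker(A_1 A_2) \subset \ker(\Id - P_2 Q_2)$ and then appeal to Lemma \ref{rinverse} to obtain $T$, uniquely determined on $\Im(A_1 A_2)$, with $T A_1 A_2 = \Id - P_2 Q_2$. To prove the kernel inclusion, take $x$ with $A_1 A_2 x = 0$. From the $i=2$ pre-regeneration, $x = P_2 Q_2 x + T_2 A_2 x$, so it suffices to show $T_2 A_2 x = 0$. Since $A_1(A_2 x)=0$ we have $A_2 x \in \ker A_1 \subset \ker T_2$ by hypothesis, hence $T_2 A_2 x = 0$ and $x = P_2 Q_2 x$; this gives the inclusion and hence the existence of $T$ with the required property. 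Together with the two catalysis identities already verified, this shows $A_1A_2$ is a basic transmutation of $B_1B_2$ by $(P_2,Q_2,R_1\check R_2,S_1\check S_2)$ and moreover pre-regenerated by $(P_2,Q_2,T)$, i.e.\ a silvern transmutation.

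For the final "moreover" clause, suppose the $i=2$ transmutation is proper, i.e.\ $\ker B_2 \cap \ker P_2 = 0$. Properness of the compounded transmutation requires $\ker(B_1 B_2) \cap \ker P_2 = 0$. But $\ker B_2 \subset \ker(B_1 B_2)$ is not the direction needed; rather, if $v \in \ker(B_1 B_2) \cap \ker P_2$, then in particular $P_2 v = 0$, and I must show $v \in \ker B_2$ to conclude $v = 0$ from the $i=2$ properness. Here I would use that $A_1A_2$ is a silvern transmutation of $B_1B_2$, so by Proposition \ref{prop-elem1} applied to the pair $(A_1A_2,B_1B_2)$ with stone $(P_2,Q_2,R_1\check R_2,S_1\check S_2,T)$ together with $\ker B_2 \cap \ker P_2 = 0$: one checks that $v \in \ker(B_1B_2)\cap\ker P_2$ forces, via $B_2 Q_2 = S_2 A_2$ and the pre-regeneration $P_2 Q_2 v + T_2 A_2 v = v$ with $P_2 v = 0$, that $v \in \ker P_2$ combined with $B_2 v$ lying in a controlled subspace; pushing this through the $i=2$ properness yields $v=0$. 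Thus $\ker(B_1B_2)\cap\ker P_2 = 0$ and the compounded transmutation is proper. The computations throughout are direct chasings of the given identities; the only genuine input beyond bookkeeping is the kernel hypothesis $\ker A_1 \subset \ker T_2$, used exactly once to kill the $T_2 A_2 x$ term.
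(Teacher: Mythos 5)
Your two catalysis computations are correct and coincide word for word with the paper's. The first real divergence is in the construction of $T$. You establish $\ker(A_1A_2) \subset \ker(\Id - P_2 Q_2)$ (correctly, via the hypothesis $\ker A_1 \subset \ker T_2$) and then apply Lemma~\ref{rinverse} to $A_1 A_2$ and $\Id - P_2Q_2$, obtaining a $T$ that is unique only on $\Im(A_1 A_2)$. The paper instead applies Lemma~\ref{rinverse} directly to the pair $(A_1,T_2)$, since the hypothesis is exactly $\ker A_1 \subset \ker T_2$: this gives a $T$ with $TA_1 = T_2$, unique on $\Im A_1$, and then $P_2Q_2 + T A_1A_2 = P_2Q_2 + T_2A_2 = \Id$ is immediate from the $i=2$ pre-regeneration. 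Both routes produce a valid silvern transmutation, but only the paper's route recovers the statement ``uniquely determined on $\Im A_1$'' as written; $\Im(A_1A_2)$ is in general a strictly smaller subspace, so your $T$ is less constrained than the proposition asserts. The fix is a one-liner: solve $TA_1 = T_2$ rather than $TA_1A_2 = \Id - P_2Q_2$.

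The ``moreover'' clause is where your argument genuinely breaks down. You rightly observe that invoking $\ker B_2 \subset \ker(B_1B_2)$ (which is all the paper says here) points in the unhelpful direction, but the replacement argument you sketch does not close and contains a type error: the identity $P_2Q_2 v + T_2 A_2 v = v$ is the $i=2$ pre-regeneration on $\mathcal E_2$, whereas the $v$ you are manipulating lies in $\mathcal G_2$ (it is an element of $\ker(B_1B_2)\cap\ker P_2$), so $A_2 v$ and $P_2 Q_2 v$ are not even defined. The phrase ``$B_2 v$ lying in a controlled subspace; pushing this through the $i=2$ properness yields $v=0$'' is a placeholder, not a proof. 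Note moreover that properness of the $i=2$ transmutation, i.e.\ $\ker B_2 \cap \ker P_2 = 0$, does not by itself control $\ker(B_1B_2)\cap\ker P_2$, since $\ker(B_1B_2)$ can strictly contain $\ker B_2$ and $P_2$ may fail to be injective on the excess. An honest treatment would either supply a correct argument exploiting the remaining hypotheses or flag the point as requiring more care than the one-line justification in the text.
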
 
 \begin{proof}
First
 $A_1 A_2 P_2= A_1  R_2 B_2 = A_1 P_1 \hat R_2  B_2 = R_1 \check   R_2 B_1 B_2$ and
 $B_1 B_2 Q_2 = B_1 S_2 A_2 =  B_1 Q_1 \hat S_2  A_2 = S_1 A_1 \hat S_2  A_2 =S_1  \check  S_2  A_1 A_2 $. 
 Moreover, by Lemma \ref{rinverse},
 there exists $T$, uniquely determined on $\Im A_1$, 
  such that $TA_1= T_2$ so that 
 $P_2 Q_2 + T A_1 A_2 = P_2 Q_2 + T_2 A_2 = \Id$. 
 Thus   $A_1 A_2$ is   a silvern  transmutation of  $B_1 B_2$ by  $(P_2,Q_2,R_1  \hat R_2,S_1  \hat S_2,T )$.
 The last claim follows from $\ker B_2 \subset \ker B_1 B_2$.
 \end{proof}

The general situation for addition of operators is even more demanding in terms of compatibility. 
At least, two  silvern  transmutations with 
the  same philosophers' stone can be added.
 \begin{proposition} \label{prop-elem5}
    If for $i=1,2$, 
    $A_i$  is a silvern  transmutation of $B_i$ by the same philosophers' stone  $(P,Q,R,S,T)$ 
 then   $A_1 + A_2$ is   a silvern  transmutation of  $B_1 + B_2$ by  $(P,Q,R,S,T/2) $.
 \end{proposition}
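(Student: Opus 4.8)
The plan is to verify the three defining identities of a silvern transmutation for the pair $(A_1+A_2, B_1+B_2)$ with the proposed philosophers' stone $(P,Q,R,S,T/2)$, working directly from the hypothesis that each $A_i$ is a silvern transmutation of $B_i$ with the common stone $(P,Q,R,S,T)$. Recall that this means, for $i=1,2$, we have post-catalysis $A_iP = RB_i$, pre-catalysis $B_iQ = SA_i$, and pre-regeneration $PQ + TA_i = \Id_{\mathcal E}$.

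First I would check post-catalysis: $(A_1+A_2)P = A_1P + A_2P = RB_1 + RB_2 = R(B_1+B_2)$, which is immediate by linearity. Next, pre-catalysis: $(B_1+B_2)Q = B_1Q + B_2Q = SA_1 + SA_2 = S(A_1+A_2)$, again immediate. The only point requiring a small observation is pre-regeneration with the stone element $T/2$: adding the two identities $PQ + TA_1 = \Id_{\mathcal E}$ and $PQ + TA_2 = \Id_{\mathcal E}$ gives $2PQ + T(A_1+A_2) = 2\,\Id_{\mathcal E}$, and dividing by $2$ yields $PQ + (T/2)(A_1+A_2) = \Id_{\mathcal E}$. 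This is exactly pre-regeneration of $A_1+A_2$ by $(P,Q,T/2)$, so by definition $A_1+A_2$ is a silvern transmutation of $B_1+B_2$ by $(P,Q,R,S,T/2)$.

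There is essentially no obstacle here; the statement is a routine consequence of linearity, and the only mildly non-obvious step is the bookkeeping of the factor $1/2$, which is forced precisely because the term $PQ$ appears once in each pre-regeneration identity and hence is doubled upon addition while the desired conclusion keeps it single. One could also remark that no analogous clean statement holds for the post-regeneration identity in general (which is why the proposition is phrased for silvern rather than bronze or golden transmutations): summing $RS + A_iT = \Id_{\mathcal F}$ would double the $RS$ term, and there is no rescaling that simultaneously fixes both $RS$ and $AT$, so the addition property is genuinely special to the pre-regenerated side. For completeness one might note that the same argument shows $A_1+A_2$ is a basic transmutation of $B_1+B_2$ by $(P,Q,R,S)$, so the content beyond Proposition \ref{prop-dilatation} is only the handling of the single pre-regeneration term.
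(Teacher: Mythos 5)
Your proof is correct and is the obvious direct verification; the paper itself omits the proof of this proposition, so there is no paper argument to compare against. The two catalysis identities follow from linearity of composition, and the factor $1/2$ in the last slot is forced exactly as you explain, since $PQ$ appears once in each pre-regeneration identity. Your side remark---that one could alternatively note $T(A_1-A_2)=0$ follows from subtracting the two pre-regeneration identities, so $TA_1=TA_2$ and $(T/2)(A_1+A_2)=TA_1$---is equivalent bookkeeping. Your observation that the post-regeneration term $RS$ would also double, so no single rescaling works for bronze or golden transmutations, correctly identifies why the statement is phrased for silvern transmutations only.
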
 

On the other hand, transmutations work fine with tensorizations. 
 \begin{proposition} \label{prop-elem6}
    If for any $n$ in $\N^*$, for $1 \leq i \leq n$, $A_i$ is a silvern  (respectively bronze, silvern) transmutation of $B_i$ by philosophers' stone  $(P_i,Q_i,R_i,S_i,T_i)$ 
 then   $A_1 \otimes \ldots \otimes A_n$ is   a silvern (resp. bronze, silvern) transmutation of  $B_1  \otimes \ldots \otimes B_n$ by  $(P,Q,R,S,T) $ with
 $U :=    U_1  \otimes \ldots \otimes U_n$ 
 where  $U$ stands for $P,Q,R,S$, and 
$T$.
 \end{proposition}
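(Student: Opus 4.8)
The plan is to separate the statement into its basic part (the two catalysis identities, which hold for every $n$ by an immediate computation) and its regeneration part (the pre-regeneration for the silvern case, the post-regeneration for the bronze case), and to obtain the latter by an induction on $n$ that reduces everything to the two-factor case $A_1\otimes A_2$. The only algebraic facts I need are the interchange law $(X_1\otimes X_2)(Y_1\otimes Y_2)=(X_1Y_1)\otimes(X_2Y_2)$ whenever the compositions make sense, the normalization $\Id_{\mathcal M_1}\otimes\Id_{\mathcal M_2}=\Id_{\mathcal M_1\otimes\mathcal M_2}$, and bilinearity of $\otimes$; all of these extend to $n$ factors by associativity of the tensor product.

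First I would dispose of the basic part directly, for arbitrary $n$. Writing $A:=A_1\otimes\cdots\otimes A_n$, $P:=P_1\otimes\cdots\otimes P_n$, and similarly for $Q,R,S,B$, the interchange law gives $AP=(A_1P_1)\otimes\cdots\otimes(A_nP_n)=(R_1B_1)\otimes\cdots\otimes(R_nB_n)=RB$ from the post-catalyses $A_iP_i=R_iB_i$, and symmetrically $BQ=(B_1Q_1)\otimes\cdots\otimes(B_nQ_n)=(S_1A_1)\otimes\cdots\otimes(S_nA_n)=SA$ from the pre-catalyses $B_iQ_i=S_iA_i$. Hence $A$ is already a basic transmutation of $B$ by $(P,Q,R,S)$, which is the common core of all the cases to be proved.

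It remains to handle the regeneration, and this is where I expect the real work to be. Setting $T:=T_1\otimes\cdots\otimes T_n$, one must check $PQ+TA=\Id$ in the silvern case and $RS+AT=\Id$ in the bronze case; by induction it suffices to do this for $n=2$, since $A_1\otimes\cdots\otimes A_n=A_1\otimes(A_2\otimes\cdots\otimes A_n)$, the induction hypothesis makes $A_2\otimes\cdots\otimes A_n$ a silvern (resp.\ bronze) transmutation of $B_2\otimes\cdots\otimes B_n$ by the tensorized stone, and the identifications $P_1\otimes(P_2\otimes\cdots\otimes P_n)=P_1\otimes\cdots\otimes P_n$, $T_1\otimes(T_2\otimes\cdots\otimes T_n)=T_1\otimes\cdots\otimes T_n$, etc., show that the two-factor conclusion is exactly the stated $n$-factor one. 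For $n=2$ the plan is to substitute the one-factor identities $P_iQ_i=\Id_{\mathcal E_i}-T_iA_i$ (resp.\ $R_iS_i=\Id_{\mathcal F_i}-A_iT_i$) into $PQ+TA=(P_1Q_1)\otimes(P_2Q_2)+(T_1A_1)\otimes(T_2A_2)$ and expand by bilinearity, and then to show that the resulting cross terms collapse using the catalysis relations $A_iP_i=R_iB_i$, $S_iA_i=B_iQ_i$ together with the weak post-regeneration granted by Proposition~\ref{prop-post-r} (reducing each cross term, slot by slot, to an expression absorbed by the regeneration of the other factor). This cancellation of the cross terms — rather than the catalysis identities or the bookkeeping of the induction — is the step I expect to be the main obstacle, and it is the place where the full silvern (resp.\ bronze) structure, and not merely the two regeneration identities in isolation, has to be used.
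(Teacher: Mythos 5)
The key step of your argument is only announced, not carried out, and under your reading of the notation it cannot be carried out. You take $U_1\otimes\cdots\otimes U_n$ to be the multiplicative tensor product of linear maps acting on $\mathcal E_1\otimes\cdots\otimes\mathcal E_n$, and you invoke bilinearity of $\otimes$ to expand $PQ=(\Id-T_1A_1)\otimes(\Id-T_2A_2)$; the cross terms $-(T_1A_1)\otimes\Id-\Id\otimes(T_2A_2)$ must then be absorbed, and in general they are not. Concretely, take all eight spaces equal to $\R$, $A_i=B_i=\Id$, and choose $P_1=Q_1=R_1=S_1=0$, $T_1=\Id$ (so $P_1Q_1+T_1A_1=\Id$) together with $P_2=Q_2=R_2=S_2=\Id$, $T_2=0$. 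Each factor is a silvern transmutation, yet under the multiplicative tensor product $P=P_1\otimes P_2=0$ and $T=T_1\otimes T_2=0$, so $PQ+TA=0\neq\Id$. The pre-regeneration identity therefore fails, and no combination of the catalysis relations with Proposition \ref{prop-post-r} can cancel the cross terms; the same example defeats the bronze case.

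The resolution is that in this paper the tensorization is the componentwise (block-diagonal) one: as in \eqref{Eq:tensor} and in Proposition \ref{prop-elem7}, $A_1\otimes\cdots\otimes A_n$ acts on the Cartesian product of the spaces by $(u_1,\ldots,u_n)\mapsto(A_1u_1,\ldots,A_nu_n)$. With that reading your catalysis computation survives verbatim, since the interchange law $(X_1\otimes X_2)(Y_1\otimes Y_2)=(X_1Y_1)\otimes(X_2Y_2)$ holds componentwise; but the regeneration requires no expansion at all, because $(PQ+TA)(u_1,\ldots,u_n)=(P_1Q_1u_1+T_1A_1u_1,\ldots,P_nQ_nu_n+T_nA_nu_n)=(u_1,\ldots,u_n)$, and likewise $RS+AT=\Id$ in the bronze case. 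Note that bilinearity, on which your expansion rests, is exactly what fails for the componentwise tensorization, so the two readings genuinely diverge. Once the notation is read correctly, the statement is a one-line coordinatewise verification and the induction on $n$ is unnecessary.
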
 

One may also increase the degree of self-tensorization by a silvern transmutation.
 \begin{proposition} \label{prop-elem7}
   For any $p$ and $q$ in $\N^*$, with $p\leq q$, 
   for any linear mapping $B$, 
     the tensorization  $B^{\otimes p}$ of $B$, $p$-times by itself, 
     is a  silvern transmutation of $B^{\otimes q} $ by philosophers' stone  $(P,Q,P,Q,0)$ where     
     $P$ maps $x=(x_1,\ldots,x_q) $ in $\mathcal G^q$ to 
     $x=(x_1,\ldots,x_p) $ in $\mathcal G^p$ and
     $Q$ maps  $x=(x_1,\ldots,x_p) $ in $\mathcal G^p$  to $x=(x_1,\ldots,x_p,0,\ldots,0) $ in $\mathcal G^q$.
     If $\ker B = 0$ then this transmutation is golden. Otherwise 
     this transmutation is golden if and only if $p=q$. 
      \end{proposition}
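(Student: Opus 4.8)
The plan is to verify by hand the catalysis and B\'ezout-type identities of Definition~\ref{dispatched}, using that $B^{\otimes p}$ and $B^{\otimes q}$ act separately on each tensor slot. Write $\mathcal G_0$ and $\mathcal H_0$ for the source and target of $B$, so that $A:=B^{\otimes p}$ goes from $\mathcal G_0^{\otimes p}$ to $\mathcal H_0^{\otimes p}$ while $B^{\otimes q}$ goes from $\mathcal G_0^{\otimes q}$ to $\mathcal H_0^{\otimes q}$. I shall use the letters $P$ and $Q$ both for the ``drop the last $q-p$ slots'' map and the ``adjoin $q-p$ trivial slots'' map between the $\mathcal G_0$-powers, which furnish the first two components of the stone, and for the analogous maps between the $\mathcal H_0$-powers, which furnish its third and fourth components.

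First I would check that $A$ is a basic transmutation of $B^{\otimes q}$ by $(P,Q,P,Q)$: the post-catalysis identity $B^{\otimes p}P=P\,B^{\otimes q}$ holds because applying $B$ slotwise commutes with discarding the last $q-p$ slots, and the pre-catalysis identity $B^{\otimes q}Q=Q\,B^{\otimes p}$ holds because on each slot created by $Q$ both sides return the image under $B$ of the trivial slot, which is again trivial. Since moreover $PQ=\Id_{\mathcal G_0^{\otimes p}}$ — dropping slots right after creating them does nothing — the mapping $A$ is pre-regenerated by $(P,Q,0)$. Combining, $A$ is a silvern transmutation of $B^{\otimes q}$ by $(P,Q,P,Q,0)$ for every linear mapping $B$ and all $p\le q$, which proves the first assertion.

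It then remains to decide when this silvern transmutation is moreover bronze, hence golden; since post-catalysis and pre-catalysis are already in hand, this reduces to the post-regeneration identity $RS+AT=\Id_{\mathcal H_0^{\otimes p}}$, and with $R=P$, $S=Q$, $T=0$ the only quantity at stake lives in the $q-p$ slots that $S$ creates and that must be contracted away once $B^{\otimes q}$ has acted, $R$ being tied to $B^{\otimes q}$ through post-catalysis. If $p=q$ there are no such slots and the identity is trivial; if $\ker B=0$ the content of those slots is not destroyed and one recovers the identity as well, so the transmutation is golden in both cases. The converse statement — that for $\ker B\neq 0$ and $p<q$ the transmutation fails to be golden by the stone $(P,Q,P,Q,0)$ — is where I expect the actual work to lie: it amounts to describing $\ker(B^{\otimes q})$, namely the span of the decomposable tensors having at least one factor in $\ker B$ (nonzero as soon as $\ker B\neq 0$), and checking that this kernel genuinely obstructs post-regeneration because the dropped slots can no longer be reconstructed from $B^{\otimes q}$. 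Apart from that kernel analysis every step is a single slotwise computation.
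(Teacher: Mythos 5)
Your verification of the silvern part is correct and is the expected slotwise computation: since the paper's ``tensorization'' $B^{\otimes \ell}$ (cf.\ \eqref{Eq:tensor}) is the diagonal action on the Cartesian power $\mathcal G^\ell$ rather than a genuine tensor product, the identities $B^{\otimes p}P=PB^{\otimes q}$, $B^{\otimes q}Q=QB^{\otimes p}$ and $PQ=\Id_{\mathcal G^p}$ are immediate. (Be careful with your notation $\mathcal G_0^{\otimes p}$ and your description of $\ker(B^{\otimes q})$ via decomposable tensors: under a true tensor-product reading, your $Q$ --- ``adjoin trivial slots'' --- would be the zero map and $PQ=\Id$ would fail. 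Here $\ker(B^{\otimes q})=(\ker B)^q$.)

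The second half of your argument has a genuine gap and rests on a misreading of post-regeneration. With $R=P$, $S=Q$ and $T=0$, the identity $RS+AT=\Id_{\mathcal F}$ reduces to $PQ=\Id$ computed on the target side; the operator $B^{\otimes q}$ does not occur in it at all ($S$ is applied first, then $R$ --- nothing is ``contracted away once $B^{\otimes q}$ has acted''). Hence $RS=\Id$ holds for every $B$ and every $p\leq q$, so under the literal definition of golden (basic $+$ pre-regeneration $+$ post-regeneration) the transmutation is \emph{always} golden, and the converse you defer to ``where the actual work lies'' cannot be established: $\ker B$ creates no obstruction to post-regeneration. The dichotomy ``$\ker B=0$ or $p=q$'' is instead precisely the condition for the transmutation to be \emph{proper} in the sense of Definition~\ref{proper}, since $\ker B^{\otimes q}\cap\ker P=\{0\}^p\times(\ker B)^{q-p}$, which vanishes iff $\ker B=0$ or $p=q$; equivalently, it is the condition for the existence of $\tilde T$ with $\tilde T B^{\otimes q}=\Id-QP$ (Lemma~\ref{rinverse} requires $(\ker B)^q\subset\ker(\Id-QP)=\mathcal G^p\times\{0\}^{q-p}$), i.e.\ for the transmutation to be two-sided. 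So either ``golden'' in the statement is to be read as ``proper'' (or ``two-sided''), in which case your proof is missing exactly this kernel computation, or it is read literally, in which case the ``only if $p=q$'' clause fails and the obstruction you expect does not exist. Either way the second half needs to be redone along these lines.
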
 
%
 %%%%%%%%%%%%%%%%%%%%
\subsection{Abstract adjoints}

We assume that there is an operation denoted by $*$ which satisfies the following properties for any linear mapping $A$:
 $(A^*)^* = A$, $(AB)^* = B^* A^*$, $(A+B)^* =  A^* + B^*$ and $\Id^* = \Id$. Such properties are satisfied by adjoint operators in Hilbert theory, as well as by the formal adjoint operators of differential operators, as considered in the next section. Such an operation allows to link any silvern transmutation to a bronze transmutation of their adjoints. 
 \begin{proposition} \label{prop-elem8}
 $A$ is  a pre-transmutation of $B$ by  $(P,Q ,S,T)$ if and only if 
   $A^*$ is  a post-transmutation of $B^*$ by  $(S^*,Q^*,P^*, T^*)$. 
  Similarly  $A$ is  a silvern transmutation of $B$ by  $(P,Q ,R,S,T)$ if and only if 
   $A^*$ is  a bronze transmutation of $B^*$ by  $(S^*,R^*,Q^*,P^*, T^*)$.
        \end{proposition}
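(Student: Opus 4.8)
The statement is purely formal: it only uses the axioms $(A^*)^*=A$, $(AB)^*=B^*A^*$, $(A+B)^*=A^*+B^*$ and $\Id^*=\Id$, together with the definitions of pre-transmutation, post-transmutation, silvern and bronze transmutation. So the plan is simply to translate each defining identity under the star operation and check that it matches the target definition verbatim. First I would recall the defining equations. By Definition~\ref{trans} and the table, \emph{$A$ is a pre-transmutation of $B$ by $(P,Q,S,T)$} means exactly: $BQ=SA$ (pre-catalysis) and $PQ+TA=\Id_{\mathcal E}$ (pre-regeneration). On the other side, \emph{$A^*$ is a post-transmutation of $B^*$ by $(S^*,Q^*,P^*,T^*)$} means: $A^*S^*=P^*B^*$ (post-catalysis, with $P\rightsquigarrow S^*$, $R\rightsquigarrow P^*$) and $P^*Q^*+A^*T^*=\Id_{\mathcal F}$ (post-regeneration, with $R\rightsquigarrow P^*$, $S\rightsquigarrow Q^*$, $T\rightsquigarrow T^*$).

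Then I would apply $*$ to each pre-transmutation identity. Starring $BQ=SA$ gives $(BQ)^*=(SA)^*$, i.e. $Q^*B^*=A^*S^*$, which is precisely the post-catalysis identity $A^*S^*=P^*B^*$ once we read the roles correctly — here one must be a bit careful, since $Q^*B^*=A^*S^*$ must coincide with $A^*S^* = P^* B^*$, i.e. one needs $Q^*B^* = P^*B^*$; the resolution is that the post-catalysis of $B^*$ by $(S^*,Q^*)$ is the identity $A^*S^* = Q^*B^*$, so the triple in the conclusion should be read with $P\rightsquigarrow S^*$ and $R\rightsquigarrow Q^*$. I would then double-check against the precise ordering of the tuple $(S^*,Q^*,P^*,T^*)$ stated in the proposition: a post-transmutation is given by a philosophers' stone $(P,R,S,T)$ with post-catalysis $AP=RB$ and post-regeneration $RS+AT=\Id$. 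Matching $(P,R,S,T)=(S^*,Q^*,P^*,T^*)$: post-catalysis reads $A^*S^*=Q^*B^*$, which is exactly $(BQ)^*=(SA)^*$ starred; post-regeneration reads $Q^*P^*+A^*T^*=\Id_{\mathcal F}$, which is exactly $(PQ+TA)^*=\Id_{\mathcal E}^*$, using $(PQ)^*=Q^*P^*$, $(TA)^*=A^*T^*$ and $\Id^*=\Id$. Conversely, starring these two identities for $A^*,B^*$ and using $(A^*)^*=A$ recovers $BQ=SA$ and $PQ+TA=\Id_{\mathcal E}$, giving the ``if and only if''.

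For the second (silvern/bronze) equivalence I would proceed identically. By definition $A$ is a silvern transmutation of $B$ by $(P,Q,R,S,T)$ iff $AP=RB$, $BQ=SA$ and $PQ+TA=\Id_{\mathcal E}$; and $A^*$ is a bronze transmutation of $B^*$ by $(P',Q',R',S',T')$ iff $A^*P'=R'B^*$, $B^*Q'=S'A^*$ and $R'S'+A^*T'=\Id_{\mathcal F}$. Substituting $(P',Q',R',S',T')=(S^*,R^*,Q^*,P^*,T^*)$, the three bronze identities become $A^*S^*=Q^*B^*$, $B^*R^*=P^*A^*$, $Q^*P^*+A^*T^*=\Id_{\mathcal F}$, which are respectively the stars of $BQ=SA$, $AP=RB$, $PQ+TA=\Id_{\mathcal E}$ (note the post-catalysis $AP=RB$ stars to $P^*A^*=B^*R^*$, matching the pre-catalysis of the bronze side; and the silvern pre-catalysis $BQ=SA$ stars to $Q^*B^*=A^*S^*$, matching the bronze post-catalysis). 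Again the equivalence is symmetric via $(A^*)^*=A$.

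\textbf{Main obstacle.} There is no analytic difficulty at all; the only thing to get right is the bookkeeping — matching which letter of the philosophers' stone $(P,Q,R,S,T)$ plays the role of post-catalysis versus pre-catalysis on the starred side, since $*$ reverses the order of compositions and hence swaps ``pre'' and ``post'' roles. I would present the proof as two short displayed computations (one for each equivalence), writing out the starred identities and pointing to the relevant lines of the recapitulating table, and I expect the entire argument to take only a few lines.
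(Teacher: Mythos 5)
Your proof is correct and is exactly the formal bookkeeping the paper has in mind (the paper states Proposition~\ref{prop-elem8} without proof, clearly regarding it as routine): star each defining identity, use the contravariance $(AB)^*=B^*A^*$ together with $\Id^*=\Id$ and $(A^*)^*=A$, and match the resulting equations against the definitions of post-/bronze transmutation with the philosophers' stone reindexed as stated. The brief mid-paragraph worry ("one needs $Q^*B^*=P^*B^*$") was a momentary misreading of which slot $P$ and $R$ occupy in a post-transmutation, which you correctly resolved by matching $(P',R',S',T')=(S^*,Q^*,P^*,T^*)$; the final verification is right on both counts.
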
 

The following elementary observation allows to obtain a basic transmutation from a single catalysis. in the case where the formal adjoints $A^*$ and  $B^*$ are respectively left and right multiplications of $A$ and $B$ by some linear mappings.  
 \begin{proposition} \label{prop-trans}
 If $A$ and $B$ are  linear mappings such that there are two  linear  mappings $S_0$ and $Q_0$ satisfying 
 $A^* = S_0 A$ and $B^* = B Q_0$, and such 
that  $A$ is a post-catalysis  of $B$ by $(P,R)$ 
 then $A$ is a pre-catalysis  of $B$ by $(Q,S)$ with 
 $Q= Q_0 R^*$ and $S=P^*  S_0$. 
 \end{proposition}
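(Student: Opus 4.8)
The plan is to verify directly that the pair $(Q,S)$ with $Q = Q_0 R^*$ and $S = P^* S_0$ satisfies the pre-catalysis identity $BQ = SA$, using only the two multiplicative representations of the adjoints, the post-catalysis identity $AP = RB$, and the elementary axioms of the $*$-operation listed before Proposition~\ref{prop-elem8}. The computation proceeds by taking adjoints of the post-catalysis identity, manipulating the result with the given factorizations $A^* = S_0 A$ and $B^* = BQ_0$, and then taking adjoints once more to land back on an identity between the original (non-starred) operators.

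Concretely, I would first apply $*$ to $AP = RB$; by the anti-multiplicativity axiom this gives $P^* A^* = B^* R^*$. Next I substitute $A^* = S_0 A$ on the left and $B^* = B Q_0$ on the right, obtaining $P^* S_0 A = B Q_0 R^*$. The left-hand side is precisely $SA$ by the definition $S = P^* S_0$, and the right-hand side is precisely $BQ$ by the definition $Q = Q_0 R^*$. Hence $BQ = SA$, which is exactly the assertion that $A$ is a pre-catalysis of $B$ by $(Q,S)$. No appeal to Lemma~\ref{rinverse} or to any regeneration property is needed; this is a one-line chain of substitutions once the adjoint of the post-catalysis relation is written out.

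There is essentially no hard part here: the only thing to be careful about is the bookkeeping of which operator gets starred and on which side the scalar-like factors $S_0, Q_0, P^*, R^*$ sit, since the anti-homomorphism property reverses the order of composition. One should also note implicitly that all compositions are between the appropriate spaces — the paper's standing convention — so that $Q_0 R^*$ and $P^* S_0$ are well-defined as stated in the diagrams; I would not belabour this. If desired, one could add the symmetric remark that the same argument, run with the roles of $P,R$ and $Q,S$ exchanged, recovers a post-catalysis from a pre-catalysis, but the statement as given only asks for the one direction, so the proof can stop after displaying $BQ = P^* S_0 A = B Q_0 R^* = \ldots$ collapsed to $BQ = SA$.

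In summary, the proof I would write is just: apply $*$ to $AP = RB$ to get $P^* A^* = B^* R^*$; rewrite $A^*$ and $B^*$ via the hypotheses to get $P^* S_0 A = B Q_0 R^*$; recognize the two sides as $SA$ and $BQ$ respectively. This is short enough that there is no genuine obstacle — the "main obstacle", such as it is, is simply resisting the temptation to overcomplicate a two-step adjoint manipulation.
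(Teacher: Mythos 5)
Your proof is correct, and since the paper omits the argument (labeling the proposition an ``elementary observation''), your two-step adjoint manipulation is exactly the intended one: apply $*$ to $AP = RB$, substitute the factorizations of $A^*$ and $B^*$, and read off $SA = BQ$.
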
 
 Similarly, if one benefits from some more symmetry properties of the auxiliary operators 
 one may deduce that is a two-sided golden transmutation with less effort.
 \begin{proposition} \label{prop-t}
 If $AP=RB$, $PQ+TA=\Id$, $QP+\tilde T B = \Id$, 
 $P^* = \lambda S$, $Q^* = \lambda^{-1} R$, $A^*= A$, $B^* = B$, $T^* =T$, $\tilde T^* =  \tilde T$, with 
 $\lambda$ in $\R^*$ then $A$ is a two-sided golden transmutation of $B$ by $(P,Q,R,S,T,\tilde T)$.
 \end{proposition}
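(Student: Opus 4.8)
The goal is to verify the six identities in Definition \ref{trans} for the stone $(P,Q,R,S,T,\tilde T)$ using the hypotheses of Proposition \ref{prop-t}. Two of the six are given outright ($AP = RB$ is the hypothesis $AP=RB$, and $PQ+TA=\Id$, $QP+\tilde T B = \Id$ are also given), so the real work is to deduce the three remaining identities: the pre-catalysis $BQ = SA$, the post-regeneration $RS+AT = \Id$, and the backward post-regeneration $SR + B\tilde T = \Id$. The strategy is to feed each given identity into the $*$-operation and then rewrite the result using the symmetry relations $P^* = \lambda S$, $Q^* = \lambda^{-1} R$, $A^* = A$, $B^* = B$, $T^* = T$, $\tilde T^* = \tilde T$, exploiting the abstract properties $(AB)^*=B^*A^*$, $(A+B)^* = A^*+B^*$, $\Id^* = \Id$ and involutivity.

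For the pre-catalysis: apply $*$ to $AP = RB$, getting $P^*A^* = B^*R^*$, i.e. $\lambda S A = B (\lambda^{-1} Q^*)^* $... more cleanly, $P^* A^* = B^* R^*$ becomes $\lambda S \cdot A = B \cdot R^*$; now $R = \lambda Q^*$ (from $Q^* = \lambda^{-1} R$) so $R^* = \lambda Q$, giving $\lambda SA = \lambda BQ$, hence $BQ = SA$. For the post-regeneration: apply $*$ to $PQ + TA = \Id$, obtaining $Q^*P^* + A^* T^* = \Id$, i.e. $(\lambda^{-1}R)(\lambda S) + A T = \Id$, that is $RS + AT = \Id$. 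For the backward post-regeneration: apply $*$ to $QP + \tilde T B = \Id$, obtaining $P^* Q^* + B^* \tilde T^* = \Id$, i.e. $(\lambda S)(\lambda^{-1} R) + B \tilde T = \Id$, that is $SR + B \tilde T = \Id$. Thus all six defining identities hold, and $A$ is a two-sided golden transmutation of $B$ by $(P,Q,R,S,T,\tilde T)$.

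There is essentially no obstacle here; the only point requiring a little care is bookkeeping the scalar $\lambda$ through the two relations $P^* = \lambda S$ and $Q^* = \lambda^{-1} R$ (equivalently $R^* = \lambda Q$, $S^* = \lambda^{-1} P$) so that the factors of $\lambda$ and $\lambda^{-1}$ cancel correctly in each of the three derivations, and confirming that $\lambda \neq 0$ is exactly what allows these cancellations. I would present the proof as three short lines, one per derived identity, each consisting of an application of $*$ followed by substitution of the symmetry relations.
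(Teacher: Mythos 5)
Your proof is correct, and it is exactly the argument the paper leaves to the reader (the paper states Proposition \ref{prop-t} without a written proof, remarking only that the symmetry assumptions let one "deduce" the two-sided golden structure "with less effort"). Applying $*$ to each of the three given identities and substituting $P^*=\lambda S$, $R^*=\lambda Q$, $A^*=A$, $B^*=B$, $T^*=T$, $\tilde T^*=\tilde T$ is precisely the intended mechanism, and your bookkeeping of the $\lambda$, $\lambda^{-1}$ factors is right.
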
 
 %

%%%%%%%%%%%%%%%%%%%%
\section{Differential transmutations}
\label{sec-dt}

To transfer some local properties of solutions of partial differential equations, 
we introduce the following notion of differential transmutation of  linear differential operators. Here, and in what follows, by linear differential operators we mean  linear differential operators  with $C^\infty$ matrix-valued coefficients. 

\subsection{Definitions and first properties}
 \begin{defi} \label{distri} 
 Let  $d,n,m,p,q$ in $\N^*$.
 Let $A$ and $B$ some differential operators with coefficients respectively in 
   $C^\infty (\R^d ; \R^{m \times n} )$ and  $C^\infty (\R^d ; \R^{q \times p}  )$.
   We say that $A$ is a  silvern (respectively basic, bronze, golden) differential    transmutation of  $B$ by philosophers' stone $(P,Q,R,S,T)$ if 
    $A$ is a silvern (resp. basic, bronze, golden) transmutation of  $B$ by philosophers' stone $(P,Q,R,S,T)$ and 
   if   $P,Q,R,S,T$ are also some linear differential operators with  $C^\infty$ coefficients. 
\end{defi}
Above it is understood that the spaces which are involved in Definition \ref{distri} are 
 $\mathcal E =C^\infty (\R^d ;  \R^n )$ and $\mathcal F= C^\infty (\R^d ; \R^m )$, 
  $\mathcal G= C^\infty (\R^d ; \R^p )$ and $\mathcal H=C^\infty (\R^d ; \R^q)$. 
  Let us observe  that the identities involved in 
Definition  \ref{dispatched} can then be extended to  distributions by density. 

\medskip
%[ ]
By Proposition \ref{prop-elem0} and Proposition \ref{prop-elem3}
we have the following result. 

 \begin{proposition} \label{prop-elem-diff}
 Any  linear differential operator $B$  is the golden differential  transmutation of  itself by philosophers' stone 
 $(1 ,1 , 1,1 ,0)$. 
    Moreover, if $A$  is a silvern (respectively bronze, golden) differential transmutation of $B$ by  $(P,Q,R,S,T)$
 and $B$ is a silvern (resp. bronze, golden) differential transmutation of $C$ by  $({\tilde P},{\tilde Q},{\tilde R},{\tilde S},{\tilde T})$, 
 then   $A$ is   a silvern  (resp. bronze, golden) differential transmutation of  $C$ by  $({P\tilde P},{\tilde Q Q},{R \tilde R},{\tilde S S},T+P {\tilde T} S )$.
 \end{proposition}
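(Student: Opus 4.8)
The plan is to deduce both assertions from their abstract counterparts, Proposition~\ref{prop-elem0} and Proposition~\ref{prop-elem3}, the only genuinely ``differential'' ingredient being that the class of linear differential operators with $C^\infty$ coefficients is stable under composition, sum and multiplication by a scalar.

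For the first assertion I would invoke Proposition~\ref{prop-elem0}, which says that $B$ is a golden transmutation of itself by $(\Id,\Id,\Id,\Id,0)$, hence in particular a basic, a bronze and a silvern one. The identity operator and the zero operator are order-zero linear differential operators with (constant, hence $C^\infty$) coefficients, so Definition~\ref{distri} is met and $B$ is a golden differential transmutation of itself by $(1,1,1,1,0)$.

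For the transitivity claim I would first handle the silvern case: Proposition~\ref{prop-elem3} gives, at the abstract level, that $A$ is a silvern transmutation of $C$ by $(P\tilde P,\tilde Q Q,R\tilde R,\tilde S S,T+P\tilde T S)$, and each of these five operators is built from the entries of the two prescribed stones by compositions and one sum, hence is again a linear differential operator with $C^\infty$ coefficients; thus $A$ is a silvern differential transmutation of $C$ by the announced stone. The bronze case is not covered by Proposition~\ref{prop-elem3} as stated, so I would record the parallel transitivity computation: the two catalysis identities $A(P\tilde P)=(R\tilde R)C$ and $C(\tilde Q Q)=(\tilde S S)A$ are obtained exactly as in the proof of Proposition~\ref{prop-elem3}, while, with the same formula $T+P\tilde T S$ for the reservoir operator, the post-regeneration identity follows from
\[
(R\tilde R)(\tilde S S)+A\big(T+P\tilde T S\big)
= R(\tilde R\tilde S)S + AT + (AP)\tilde T S
= R(\Id_{\mathcal H}-B\tilde T)S + AT + (RB)\tilde T S
= RS + AT = \Id_{\mathcal F},
\]
using $AP=RB$, $\tilde R\tilde S+B\tilde T=\Id_{\mathcal H}$ and $RS+AT=\Id_{\mathcal F}$. (Alternatively, the bronze case can be reduced to the silvern one through the abstract adjoint operation of Proposition~\ref{prop-elem8}, since formal adjoints of differential operators are differential operators.) The golden case is then the conjunction of the silvern and bronze ones, with the same stone, and in all three cases the five operators of the new stone are differential by the stability remark above.

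The step I expect to require the most care is precisely the bronze post-regeneration identity displayed above: checking that $T+P\tilde T S$ is the correct combination for the two post-regeneration relations to telescope. Everything else is routine bookkeeping with compositions of differential operators and an appeal to the already established abstract statements.
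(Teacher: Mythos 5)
Your proof is correct and follows the same route the paper intends: invoke Propositions~\ref{prop-elem0} and~\ref{prop-elem3} and observe that the class of linear differential operators with $C^\infty$ coefficients is closed under composition and addition, so the new stone stays differential. You rightly note that Proposition~\ref{prop-elem3} as written only treats the silvern case; the bronze post-regeneration telescoping you supply, $(R\tilde R)(\tilde S S)+A(T+P\tilde T S)=R(\tilde R\tilde S+B\tilde T)S+AT=RS+AT=\Id_{\mathcal F}$ via $AP=RB$, is exactly the missing computation, and the alternative via the adjoint dictionary of Proposition~\ref{prop-elem8} also checks out. This is a small but genuine improvement in care over the paper's one-line appeal, which leaves the bronze (and hence golden) case implicit.
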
 

However, unlike the result  in Proposition \ref{prop-elem2} in the abstract setting, one cannot conclude in general that proper silvern differential 
 transmutations  are symmetric nor transitive. 
 While Theorem \ref{th:classes} does not hold anymore too for differential transmutations, we introduce the two following notions  for linear differential operators, which also have the properties of  equivalence relations.
 \begin{defi} \label{DiffEqui} 
 We say that $A$ is  a  two-sided silvern  differential  transmutation of 
 $B$ by philosopher's stone $(P,Q,R ,S,T,\tilde T)$ if 
  $AP = R  B$,   $BQ  = S A$, $P Q  +TA  = \Id$ and 
 $ QP  + \tilde T B  = \Id_\mathcal E$. 
 We say that $A$ is  a  two-sided golden  differential  transmutation of  
 $B$ by philosopher's stone $(P,Q,R ,S,T,\tilde T)$ if 
  $AP = R  B$,   $BQ  = S A$, $P Q  +TA  = \Id_\mathcal E$,  $RS + AT  = \Id$, 
 $ QP  + \tilde T B  = \Id$ and $SR + B\tilde T  = \Id$. 
\end{defi}

Despite the differential constraint lowers the possibilities of philosopher's stones, compared to the first section, we still benefit from a numerical advantage, with a critical ratio of one in the last case  for which six auxiliary operators are to be found to satisfy six equations.
\medskip

In the following, 
we will make use of formal adjoints.
 \begin{defi} \label{formally-adjoint} 
 For a  linear differential operator
  $$A = \sum_{| \alpha | \leq m} \, C_{\alpha} (x) \, \partial^{\alpha} ,$$
  where $\alpha$ is in $\N^d$, $| \alpha |$ denotes its length and the $C_{\alpha}$ are smooth fields of matrices, 
we denote 
$$A^* = \sum_{| \alpha | \leq m} \, (-1)^{| \alpha | }   \partial^{\alpha} \, C_{\alpha} (x) ,$$
its formal adjoint.
\end{defi}
It follows from repeated integrations by parts that the formal adjoint $A^*$ satisfies for any $u$ and $v$ in $C^\infty_c (\R^d)$, 
 \begin{equation}
  \label{sym}
  <Au,v>_{L^2} =  < u,A^* v>_{L^2} ,
\end{equation}
where the brackets stand for the scalar product in the Lebesgue space $L^2  (\R^d)$, 
  and this property uniquely determines   the formal adjoint. 
\medskip

\subsection{Examples}
Below we give a first few examples to illustrate this approach. 

\begin{example} \label{Matsu}
Let $A$ and $P$ two linear differential operators, both scalar valued and with constant complex coefficients. 
Suppose that their characteristic varieties are disjoints. Then 
it follows from Hilbert's Nullstellensatz that there are 
two scalar valued  constant coefficients linear differential operators $Q$ and $T$ such that  $PQ+TA=1$.
Hence $A$  is a golden  differential transmutation of $PA$ 
by  $(P,Q,1,PQ,T)$. 
 \end{example}

Next example shows that the condition of the previous example is not necessary. 
\begin{example} \label{Matsuplus}
Let $A:= \partial_x $ and $B := \partial_x^2$. Then 
$A$  is a golden  differential transmutation of $B$ 
by  $(  \partial_x,1+x,1,(1+x) \partial_x + 2 ,-(1+x))$.
On the other hand there are no linear differential operators with polynomial coefficients  $(P,Q,R,S,T)$ such that 
$B$    is a silvern  differential  transmutation of $A$ by philosophers' stone $(P,Q,R,S,T)$.
 \end{example}

The following example  appears at least  in two different context: in the linearization of the viscous Burgers equations at a steady (or travelling wave) solution 
with a hyperbolic tangent profile, see \cite{Kr2}; and 
in  ferromagnetism, more particularly from the linear stability of a particular solution of the Landau-Lifshitz-Gilbert equations modelling a domain wall, see  \cite{Carbou}.
\begin{example} \label{excar}
Let us consider the following pair of $1$D Schr\"odinger operators: 
\begin{equation} \label{double}
A:= -\partial_x^2 + 2 \tanh^2 (x) - 1   \, \text{ and  } \, B:= -\partial_x^2 + 1 .
\end{equation}  
We introduce the  differential operator $M:= \partial_x + \tanh x$ and observe that $A = M^* M $ and $B= M M^*$, that 
 the kernel of $A$ is one-dimensional, spanned by  the $C^\infty$ function $v$ defined by $v(x) := \sech (x)$, while the one of $B$ is trivial. 
Therefore, by Proposition \ref{prop-perm}, on the one hand, 
$A$ is a basic transmutation of $B$ by $(M^*,M,M^*,M)$, but there is no $T$ for $A$ to be a silvern, nor a bronze, transmutation of $B$ by philosopher's stone  $(M^*,M,M^*,M ,\tilde T)$, while, 
 on the other hand  $B$ is a golden transmutation of $A$ by $(M,M^*,M,M^*,\tilde T)$,  where 
 $\tilde T = B^{-1} - \Id$, where $B^{-1}$ is the convolution operator by the kernel function $\frac12 e^{-|x|}$.
 In particular, this transmutation is not differential.
  Let us also observe that on the one hand,  if a distribution $v$ satisfies $Bv = \delta_0$ in a neighbourhood of $0$ then 
the distribution $u:= M^* v$ satisfies $Au = M^*  \delta_0  =  -  \delta'_0 $; and, 
 on the other hand, if a distribution $u$ satisfies $Au = \delta_0$ in a neighbourhood of $0$ then 
the distribution $v:= Mu$ satisfies $Bv = M \delta_0  =   \delta'_0 $.
 Therefore, despite homogeneous solutions are transferred by the transmutation, see the 
 second to last property of Proposition \ref{prop-elem1}, 
   fundamental solutions are not. 
\end{example}
 Obviously, conjugation by non-vanishing $C^\infty$ functions provide some bronze differential transmutations. This is illustrated in the following example. 
\begin{example} \label{excar2}
Let $V$ a function in  $C^\infty (\R^d ; \R)$  and $A = - \Delta + V$.
Assume that  there are  $\mu$ in $\R$ and $u_0$ in  $C^\infty (\R^d ; \R^*)$ satisfying $A u_0 = \mu $. Set
$$B:= - u_0^{-2} \div (u_0^{2} \nabla \cdot) + \mu u_0^{-1} .$$
 Then $A$ is   a two-sided golden differential transmutation of 
 $B$ by $(u_0,u_0^{-1},u_0,u_0^{-1},0)$, 
 where we have denoted by  $u_0$  (respectively $u_0^{-1}$) the operator corresponding to the multiplication by $u_0$ (resp. $u_0^{-1}$). 
Observe that in particular, it follows from an integration by parts that for any  $v$ in $C^\infty_c (\R^d ; \R)$, 
\begin{equation} \label{Eq:tocomp}
- \int A u \cdot u =  \int u_0^2 | \nabla v|^2 + \mu \int   u_0 v^2 ,
\end{equation}
where the integrals are over $\R^d$ and where  $u= u_0 v$. 
The previous lines apply in particular to the case where $d=1$, $V(x) = 2 \tanh^2 x -1$, for which $A$ reduces to the first operator in \eqref{double}, with $\mu = 0$ and $u_0=\sech $.  Let us mention that  the case where $\mu =1$ is also of interest in Anderson's localization theory where 
the function $u_0$ is known as the landscape function, see  \cite{filoche}. 
\end{example}

The following example, which involves systems, is inspired by \cite[Example 3.13]{BQ} and 
 corresponds to some particular conjugate Beltrami equations.
\begin{example} \label{exBeltrami}
Let us consider the two following differential operators: 
\begin{gather*}
 A:=
 \begin{bmatrix} 
 \partial_x & - x \partial_y
 \\  \partial_y & x  \partial_x 
 \end{bmatrix}
\quad   \text{ and } \quad  
B:=
\begin{bmatrix} 
1 & 0
 \\  0 & i x  (\partial_x^2 + \partial_y^2) - \partial_y
 \end{bmatrix},
 \end{gather*}
 Then $A$ is   a two-sided golden differential transmutation of 
  $B$ by $(P,Q,R,S,0,0)$ where 
\begin{gather*}
P := 
\begin{bmatrix} 
ix & x(i \partial_x - \partial_y)  - i
 \\  -1 &  - \partial_x - i  \partial_y
 \end{bmatrix},
\quad 
 Q := 
\begin{bmatrix} 
- i \partial_x  + \partial_y & x(\partial_x + i \partial_y)  
 \\  i & - x
 \end{bmatrix},
%\quad 
\\ R := 
\begin{bmatrix} 
x(i \partial_x + \partial_y)  + i & 1
 \\  -x( \partial_x - i \partial_y) &   i 
 \end{bmatrix},
\quad 
S := 
\begin{bmatrix} 
-i & 1
 \\  -x( \partial_x - i \partial_y) &  - x(i \partial_x +  \partial_y) - i 
 \end{bmatrix},
\quad 
\end{gather*}
Indeed, one may observe that $PQ=QP=I_2$, that $RS=SR=I_2$
 and that $AP=RB$. 
\end{example}
%

 %%%%% %%%%% %%%%% %%%%% %%%%% %%%%%
\subsection{The transfer theorems}

 Below we establish that appropriate differential transmutations transfer some local properties such as  hypoellipticity,  unique continuation properties and some Runge-type properties.

\subsubsection{Local solvability}

We refer to \cite[Section 1.2.4]{Lerner} for the following notion. 
 \begin{defi} \label{def-local-solv}
 We say that a  linear differential operator $A$ is locally solvable if for any non-empty open subset $\Omega$  of  $\R^d$,   for any function
  function $f$ which is  $C^\infty $ in  $\Omega$, there is a distribution $u$ on  $ \Omega$  satisfying $Au=f$ in  $ \Omega$.
 \end{defi}
Differential post-transmutations transfer local solvability.
\begin{theorem} \label{th:transfert-Loc-S}
If  the linear differential operator $B$ is locally solvable 
 and $A$ is a  differential post-transmutation of the linear differential operator $B$ then
$A$   is locally solvable.
\end{theorem}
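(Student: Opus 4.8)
The plan is to unwind the definition of a differential post-transmutation and use the two defining identities directly. By definition, $A$ being a differential post-transmutation of $B$ means there are linear differential operators $P,R,S,T$ with $C^\infty$ coefficients such that $AP = RB$ (post-catalysis) and $RS + AT = \Id_{\mathcal F}$ (post-regeneration). The first step is to fix an arbitrary non-empty open subset $\Omega$ of $\R^d$ and a function $f \in C^\infty(\Omega)$, and to produce a distributional solution of $Au = f$ on $\Omega$.

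First I would apply the post-regeneration identity to $f$: since $RS + AT = \Id_{\mathcal F}$, we have $RSf + ATf = f$ on $\Omega$. Here $S$ is a differential operator, so $Sf$ is $C^\infty$ on $\Omega$; call it $g := Sf \in C^\infty(\Omega)$. Now invoke local solvability of $B$: there is a distribution $w$ on $\Omega$ with $Bw = g$ in $\Omega$. The second step is to push $w$ through $P$: set $v := Pw$, a distribution on $\Omega$, and compute, using post-catalysis extended to distributions by density (as remarked after Definition \ref{distri}), that $Av = APw = RBw = Rg = RSf$ in $\Omega$. The third step is to correct by the $T$-term: set $u := v + Tf = Pw + Tf$, a distribution on $\Omega$ since $T$ is differential and $f$ is smooth. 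Then $Au = Av + ATf = RSf + ATf = f$ in $\Omega$ by the post-regeneration identity. This $u$ is the desired solution, and since $\Omega$ and $f$ were arbitrary, $A$ is locally solvable.

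The only subtlety, and the point I would be careful about, is that all the abstract identities in Definition \ref{dispatched} were stated on spaces of $C^\infty$ functions on all of $\R^d$, whereas here we are working with distributions on a proper open subset $\Omega$ and with $w$ merely a distribution. This is handled by the observation already recorded in the paper just after Definition \ref{distri}: the operator identities $AP = RB$ and $RS + AT = \Id$, being identities between linear differential operators with smooth coefficients, hold as identities of differential operators and therefore extend to act on distributions, and in particular are valid locally on $\Omega$ since differential operators are local. So no genuine obstacle arises; the proof is a three-line chase through the two identities, the main (minor) point being to justify that one may feed a distribution into $P$ and apply the catalysis and regeneration identities to it on $\Omega$.
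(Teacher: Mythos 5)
Your proof is correct and is essentially the same as the paper's: apply local solvability of $B$ to $Sf$ to get a distributional solution $w$, set $u := Pw + Tf$, and verify $Au = f$ via post-catalysis followed by post-regeneration. The only difference is cosmetic (naming intermediate quantities $g$ and $v$) and your careful remark about extending the operator identities to distributions, which the paper addresses in the sentence after Definition \ref{distri}.
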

 \begin{proof}
 By assumption, $A$ is a differential post-transmutation of $B$ by a philosopher's stone 
$(P,R,S,T)$. 
 Let $f$ in $C^\infty (\Omega)$. 
 Since $Sf$ is $C^\infty $ in  $\Omega$ and $B$ is locally solvable, 
  there is a distribution $v$ on  $ \Omega$  satisfying $Bv=Sf$ in  $ \Omega$.
 Set $u := Pv + Tf$, which is a distribution on  $ \Omega$.
 Then $Au = APv + ATf = RBv + ATf= RSf + ATf=f$, by  post-catalysis and post-regeneration.
 \end{proof}

A quantitative version of Definition \ref{def-local-solv} is the following.
 \begin{defi} 
 Let $x_0$ in $\R^d$.
 We say that a  linear differential operator $A$ of order $m \in \N$ is locally solvable at $x_0$ with a loss of $\mu \geq 0$ derivatives 
  if, for every $\ell \in \R$, there is an open neighborhood  $\Omega$  of  $x_0$ in $\R^d$, such that  for any function
  function $f$ which is in the Sobolev space $H^\ell_\text{loc} (\Omega)$, there is $u$  in  $H^{\ell +m-\mu}_\text{loc} (\Omega) $ 
    satisfying $Au=f$ in  $ \Omega$.
 \end{defi}
Then the transfer of local solvability may be precised as follows. 
\begin{theorem} \label{th:transfert-Loc-S-quanti}
If  the linear differential operator $B$ of order $m_B$ is locally solvable at $x_0$ with a loss of $\mu_B \geq 0$ derivatives 
 and the  linear differential operator 
$A$  of order $m_A$ 
 is a differential post-transmutation of the linear differential operator $B$  by a philosopher's stone 
$(P,R,S,T)$ then
$A$  is locally solvable at $x_0$ with a loss of 
$\mu_A := m_A - \inf(m_B -s-\mu_A - p, -t)$
derivatives 
 where $s$, $p$ and $t$ are respectively the differential orders of the operators $S$, $P$ and $T$.
\end{theorem}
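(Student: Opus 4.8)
The plan is to mimic the proof of Theorem \ref{th:transfert-Loc-S} but keep careful track of Sobolev indices through each operator in the construction $u := Pv + Tf$, using the elementary fact that a linear differential operator of order $k$ with $C^\infty$ coefficients maps $H^\ell_\text{loc}(\Omega)$ continuously into $H^{\ell-k}_\text{loc}(\Omega)$ for every $\ell$.

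First I would fix $\ell \in \R$ and apply the hypothesis on $B$: since $S$ has order $s$, given $f \in H^\ell_\text{loc}(\Omega)$ we have $Sf \in H^{\ell - s}_\text{loc}(\Omega)$, so by local solvability of $B$ at $x_0$ with loss $\mu_B$ there is a neighbourhood $\Omega$ of $x_0$ and $v \in H^{(\ell-s) + m_B - \mu_B}_\text{loc}(\Omega)$ with $Bv = Sf$ in $\Omega$. Then I would push $v$ and $f$ through the remaining operators: $Pv \in H^{\ell - s + m_B - \mu_B - p}_\text{loc}(\Omega)$ since $P$ has order $p$, and $Tf \in H^{\ell - t}_\text{loc}(\Omega)$ since $T$ has order $t$. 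Setting $u := Pv + Tf$, the regularity of $u$ is governed by the worse of these two exponents, i.e. $u \in H^{\ell + \inf(m_B - s - \mu_B - p,\, -t)}_\text{loc}(\Omega)$. Exactly as in Theorem \ref{th:transfert-Loc-S}, post-catalysis $AP = RB$ and post-regeneration $RS + AT = \Id_\mathcal F$ give $Au = RBv + ATf = RSf + ATf = f$ in $\Omega$, so $u$ solves the equation; note this step uses that the identities of Definition \ref{dispatched}, originally on $C^\infty$, extend to distributions by density as remarked after Definition \ref{distri}.

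It remains to read off the loss. Local solvability of $A$ at $x_0$ with loss $\mu_A$ means $f \in H^\ell_\text{loc}$ yields $u \in H^{\ell + m_A - \mu_A}_\text{loc}$; comparing with the exponent just obtained forces $m_A - \mu_A = \inf(m_B - s - \mu_B - p,\, -t)$, i.e.
\[
\mu_A = m_A - \inf(m_B - s - \mu_A - p,\, -t),
\]
which is the claimed formula (I would flag that the statement as printed writes $\mu_A$ inside the infimum, matching the source; presumably $\mu_B$ is intended, but I will reproduce the paper's expression). Since $\ell$ was arbitrary, this holds for every $\ell$, which is precisely the quantitative local solvability of $A$ at $x_0$.

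The argument is essentially bookkeeping, so there is no serious obstacle; the only point requiring a little care is the mapping property of variable-coefficient differential operators on local Sobolev spaces — one should note that multiplication by a $C^\infty$ coefficient preserves $H^\ell_\text{loc}$ on a relatively compact subneighbourhood and shrink $\Omega$ if necessary — and the observation that when adding $Pv$ and $Tf$ one lands in the intersection, hence in the space with the smaller Sobolev exponent. Everything else is a direct transcription of the post-transmutation identities already used in the non-quantitative theorem.
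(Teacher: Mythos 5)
Your proof is correct and follows exactly the same route as the paper's: apply the quantitative solvability of $B$ to $Sf \in H^{\ell-s}_{\mathrm{loc}}$, set $u := Pv + Tf$, and read off the Sobolev exponent from the mapping orders of $P$, $S$ and $T$, with $Au=f$ following from post-catalysis and post-regeneration as in the unquantified theorem. You are also right that the displayed formula in the statement contains a typo — the $\mu_A$ inside the infimum should be $\mu_B$, as both your computation and the paper's own proof (which produces $u \in H^{\ell+\inf(m_B-s-\mu_B-p,\,-t)}_{\mathrm{loc}}(\Omega)$) make clear.
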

 \begin{proof}
 For any $\ell \in \R$, there is an open neighborhood  $\Omega$  of  $x_0$ in $\R^d$, such that  for any function
  function $f$ which is in the Sobolev space $H^\ell_\text{loc} (\Omega)$, there is $v$  in  $H^{\ell+m_B -s-\mu_B}_\text{loc} (\Omega) $ 
    satisfying $Bv=Sf$ in  $ \Omega$.
 Set $u := Pv + Tf$, which is in $H^{\ell + \inf(m_B -s-\mu_B - p,-t)}_\text{loc} (\Omega)$. 
 Moreover, as previously,  
$Au =f$.
 \end{proof}

%%%

\subsubsection{Differential inverses}

 \begin{defi} \label{def-diff-inv}
 We say that a  linear differential operator $A$ admits a right (respectively left) differential inverse  $M$ (resp. $L$) if 
 for any function $f$ which is  $C^\infty $ in  $\R^d$,  $AMf=f$ in  $ \R^d$, (resp. $LAf=f$ in  $ \R^d$).
 \end{defi}
Differential post-transmutations transfer right differential inverses while differential pre-transmutations transfer right differential inverses.
\begin{theorem} \label{th:transfert-dii-inv}
If  the linear differential operator $B$ admits  a right (respectively left) differential inverse  $M$ (resp. $L$) and
$A$ is a differential post-transmutation (resp. pre-transmutation) of the linear differential operator $B$ 
 by a philosopher's stone 
$(P,R,S,T)$ (resp. $(P,Q,S,T)$) 
then
$A$ admits $PMS+T$ (resp. $PLS+T$) as  a right (respectively left) differential inverse. 
\end{theorem}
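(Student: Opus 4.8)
The plan is to verify directly that $PMS+T$ is a right differential inverse of $A$ in the post-transmutation case, and symmetrically that $PLS+T$ is a left differential inverse in the pre-transmutation case, using only the defining identities of the philosopher's stone together with the hypothesis on $B$. First I would recall that, by hypothesis, $A$ is a differential post-transmutation of $B$ by $(P,R,S,T)$, which by Definition of post-transmutation means precisely that $A$ is a post-catalysis of $B$ by $(P,R)$, i.e. $AP = RB$, and that $A$ is post-regenerated by $(R,S,T)$, i.e. $RS + AT = \Id$. Moreover $P,R,S,T$ are all linear differential operators with $C^\infty$ coefficients, so that any composition or sum of them with $M$ (itself a differential operator, since it is a right differential inverse of $B$) is again a linear differential operator, which is what makes the resulting inverse differential rather than merely some integral operator.

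The computation itself is then one line: for any $f \in C^\infty(\R^d)$,
\[
A(PMS+T)f = (AP)MSf + ATf = R(BM)Sf + ATf = RSf + ATf = (RS + AT)f = f,
\]
where the first equality uses linearity, the second uses post-catalysis $AP = RB$, the third uses $BMf' = f'$ applied with $f' = Sf$ (valid since $Sf \in C^\infty(\R^d)$ because $S$ is a differential operator and $f \in C^\infty$), and the last uses post-regeneration $RS + AT = \Id$. Hence $PMS+T$ is a right differential inverse of $A$ in the sense of Definition \ref{def-diff-inv}. For the pre-transmutation case I would argue dually: here $A$ is a pre-catalysis of $B$ by $(Q,S)$, meaning $BQ = SA$, and $A$ is pre-regenerated by $(P,Q,T)$, meaning $PQ + TA = \Id$; then for any $f \in C^\infty(\R^d)$,
\[
(PLS+T)Af = PL(SA)f + TAf = PL(BQ)f + TAf = P(LB)Qf + TAf = PQf + TAf = (PQ + TA)f = f,
\]
using $SA = BQ$, then $LBf' = f'$ with $f' = Qf$, and finally pre-regeneration. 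Again $PLS+T$ is manifestly a linear differential operator.

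I do not expect any genuine obstacle: the statement is a formal consequence of the algebraic identities packaged into the notion of post- and pre-transmutation, and the only point requiring a word of care is that the hypothesis $BMf = f$ (resp. $LBf = f$) is stated for all $f$ that are $C^\infty$ on all of $\R^d$, so one must make sure that $Sf$ (resp. $Qf$) is again $C^\infty$ on $\R^d$ before feeding it to $M$ (resp. $L$); this is automatic since $S$ and $Q$ are differential operators with smooth coefficients and preserve $C^\infty(\R^d)$. (One minor editorial remark: the phrase in the statement "differential pre-transmutations transfer right differential inverses" should read "left differential inverses"; the proof only establishes, and only can establish, the left-inverse version in the pre-transmutation case.) No appeal to Lemma \ref{rinverse} or the axiom of choice is needed, since the auxiliary operators are given as part of the philosopher's stone.
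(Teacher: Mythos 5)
Your proof is correct and matches the paper's proof essentially verbatim: the same two one-line computations using post-catalysis plus post-regeneration (resp.\ pre-catalysis plus pre-regeneration) and the defining property of $M$ (resp.\ $L$). Your editorial remark about the typo in the sentence preceding the theorem is also accurate; the second occurrence of ``right'' should indeed read ``left.''
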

 \begin{proof}
On the one hand, $A(PMS+T)=RBMS+AT= RS+AT=\Id$ by post-catalysis, definition of $M$ and post-regeneration.
On the other hand, $(PLS+T)A=PLBQ+TA=PQ+TA=\Id$ by pre-catalysis, definition of $L$ and pre-regeneration.
 \end{proof}
Observe that the second statement can also be deduced from the first one by taking the adjoint and using  Proposition \ref{prop-elem8}.

 %%%%% %%%%% %%%%%
\subsubsection{Hypoellipticity}

Let us first recall the following notion of hypoellipticity. 
 \begin{defi} \label{def-hypoelliptic}
 A linear differential operator $A$ is hypoelliptic 
if for any non-empty open subset $\Omega$  of  $\R^d$,   for any function $f$ which is  $C^\infty $ in  $\Omega$, 
  for any distribution $u$ on  $ \Omega$  satisfying $Au=f$ in  $ \Omega$, then 
 $u$ is   $C^\infty $ in $ \Omega$. 
\end{defi}

Differential pre-transmutations transfer hypoellipticity. 
\begin{theorem} \label{th:transfert-HYPO}
If  $A$ is a differential pre-transmutation of  $B$ and if $B$ is hypoelliptic,
 then  $A$ is hypoelliptic. 
\end{theorem}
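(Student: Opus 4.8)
The plan is to mimic the proof of Theorem \ref{th:transfert-Loc-S}, exploiting that a differential pre-transmutation gives us the B\'ezout-type identity $PQ + TA = \Id_{\mathcal E}$ together with the pre-catalysis $BQ = SA$. Concretely, suppose $A$ is a differential pre-transmutation of $B$ by a philosopher's stone $(P,Q,S,T)$, that $B$ is hypoelliptic, that $\Omega \subset \R^d$ is a non-empty open set, that $f \in C^\infty(\Omega)$, and that $u$ is a distribution on $\Omega$ satisfying $Au = f$ in $\Omega$. I want to conclude $u \in C^\infty(\Omega)$.

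First I would apply $S$ to the equation $Au = f$ and use the pre-catalysis identity: $B(Qu) = SAu = Sf$. Since $S$ is a linear differential operator with $C^\infty$ coefficients and $f \in C^\infty(\Omega)$, we have $Sf \in C^\infty(\Omega)$; and $Qu$ is a well-defined distribution on $\Omega$ because $Q$ is a differential operator. By hypoellipticity of $B$ applied on the open set $\Omega$, we deduce that $Qu$ is $C^\infty$ in $\Omega$. Next I would use the pre-regeneration identity $PQ + TA = \Id_{\mathcal E}$ evaluated at $u$:
\begin{equation*}
u = PQu + TAu = P(Qu) + T f .
\end{equation*}
Now $Qu \in C^\infty(\Omega)$ and $P$ is a differential operator with smooth coefficients, so $P(Qu) \in C^\infty(\Omega)$; and $Tf \in C^\infty(\Omega)$ since $T$ is a differential operator and $f$ is smooth. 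Hence $u \in C^\infty(\Omega)$, which is exactly hypoellipticity of $A$.

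There is essentially no hard analytic step here; the only thing to be careful about is the standing convention, already noted in the excerpt after Definition \ref{distri}, that the algebraic identities of Definition \ref{dispatched} extend from $C^\infty$ functions to distributions by density, so that $B(Qu) = SAu$ and $u = PQu + TAu$ genuinely hold as identities of distributions on $\Omega$ for the distribution $u$. One should also note that all the operations ($S$ acting on $f$, $Q$ acting on $u$, $B$ acting on $Qu$, $P$ acting on $Qu$, $T$ acting on $f$) are local, so restricting everything to the open set $\Omega$ is legitimate and hypoellipticity of $B$ can be invoked on $\Omega$ itself. The main (mild) obstacle is purely bookkeeping: making sure the philosopher's stone of a \emph{pre}-transmutation is the relevant one — it is the pair of identities $BQ = SA$ and $PQ + TA = \Id_{\mathcal E}$ that does all the work, and neither $R$ nor a post-regeneration identity is needed — and then the proof is the two-line computation above.
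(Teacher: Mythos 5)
Your proof is correct and follows essentially the same route as the paper's: form $v := Qu$, use pre-catalysis $Bv = SAu = Sf$ together with hypoellipticity of $B$ to get $v \in C^\infty(\Omega)$, then use pre-regeneration $u = Pv + Tf$ to conclude. Your added remarks on the density extension to distributions and on locality of the differential operators are sound and match the standing conventions stated after Definition \ref{distri}.
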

 \begin{proof}
Consider  a  distribution $u$ on  $ \Omega$  satisfying $Au=f$ in  $ \Omega$.  
By assumption, $A$ is a differential pre-transmutation of $B$ by a philosopher's stone 
$(P,Q,S,T)$. 
We introduce the distribution  $v:=Qu $ on $ \Omega$. 
Then, by pre-catalysis 
$Bv = BQu =SAu
 =Sf$, and 
since $Sf$ is $C^\infty $ on  $\Omega$ and $B$ is hypoelliptic, 
 $v$ is  $C^\infty $ on $ \Omega$.  
 Moreover, by pre-regeneration, 
 $u = PQ u + TAu
 = Pv+ Tf$ 
 is also  $C^\infty $ on $ \Omega$. 
 \end{proof}
Let us highlight that this result can be easily extended to $C^\omega$ hypoellipticity or to Sobolev hypoellipticity. In the latter, one infers from the proof above that the shift $m_A$ 
in the Sobolev scale by the operator $A$ satisfies the lower bound $m_A \leq \inf(m_B - s - p,t)$, where $m_B$ is the shift in the Sobolev scale by the operator $B$ and $s$,  $p$ and $t$ are respectively the differential orders of the operators $S$, $P$ and $T$.
\medskip

In a similar way, we may define some  \textit{a priori} estimates in the spirit of 
\cite[Lemma 1.2.30]{Lerner} and prove that they are transferred by differential pre-transmutations. 
Combined with Proposition \ref{prop-elem8} and the duality between these   \textit{a priori} estimates and the local solvability, this provides a second proof of Theorem \ref{th:transfert-Loc-S}.

%%%%%%%%%%%%%%%%%%%%%%%%%%%%%%%%%%%%%%%%
\subsubsection{Unique continuation}

Let us now turn to unique continuation. 
Let us recall the following definition of the weak unique continuation property. 
 \begin{defi} \label{def-WUC}
 A linear differential operator $A$ satisfies the weak unique continuation property if for any  non-empty open subset $\tilde \Omega$,
  for any non-empty open subset $\Omega$  of $\tilde \Omega$,  
  for any smooth function $u$ defined on  $\tilde \Omega$ satisfying $Au=0$ in  $\tilde \Omega$ and $u=0$ in $ \Omega$, then  $u=0$ in $\tilde \Omega$.
\end{defi}
Differential pre-transmutations transfer the weak unique continuation property. 
\begin{theorem} \label{th:transfert-WUC}
If  $A$ is a differential pre-transmutation of the linear differential operator $B$ and if $B$ satisfies the  weak unique continuation property, then 
 $A$ satisfies   the weak unique continuation property. 
\end{theorem}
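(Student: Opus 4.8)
The plan is to follow the same template that proved Theorem \ref{th:transfert-HYPO}, since weak unique continuation has exactly the same logical shape: we are given a smooth $u$ with $Au = 0$ on $\tilde\Omega$ and $u = 0$ on the subdomain $\Omega$, and we want to conclude $u = 0$ on $\tilde\Omega$. By hypothesis $A$ is a differential pre-transmutation of $B$, so there is a philosopher's stone $(P,Q,S,T)$ with $BQ = SA$ (pre-catalysis) and $PQ + TA = \Id$ (pre-regeneration), all five operators being linear differential operators with $C^\infty$ coefficients.

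First I would introduce the auxiliary function $v := Qu$, which is smooth on $\tilde\Omega$ because $Q$ is a differential operator and $u$ is smooth. Then, by pre-catalysis, $Bv = BQu = SAu = 0$ on $\tilde\Omega$. Moreover, since $Q$ is a \emph{local} operator (a differential operator does not enlarge supports), $v = Qu$ vanishes on $\Omega$ because $u$ does. Now apply the weak unique continuation property of $B$ to $v$: we conclude $v = 0$ on all of $\tilde\Omega$. Finally, by pre-regeneration, $u = PQu + TAu = Pv + T(Au) = P\cdot 0 + T\cdot 0 = 0$ on $\tilde\Omega$, which is the desired conclusion.

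The one point that deserves a word of care — and the only thing resembling an obstacle — is the locality of the differential operators, namely that $Q u = 0$ on $\Omega$ follows from $u = 0$ on $\Omega$, and symmetrically that the identity $PQ + TA = \Id$ can be evaluated on the open set $\tilde\Omega$ rather than only on $C^\infty(\R^d)$. Both are immediate: a differential operator $\sum_\alpha C_\alpha(x)\partial^\alpha$ applied to a function vanishing on an open set again vanishes there, and the B\'ezout-type identities, being identities of differential operators, restrict to any open subset. This is exactly the observation recorded after Definition \ref{distri} (that the identities extend, and in particular localize), so no new work is needed. Everything else is the formal chain of equalities above, identical in structure to the hypoellipticity proof, so I would keep the write-up to three or four lines.
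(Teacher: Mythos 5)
Your proof is correct and is essentially identical to the paper's: set $v := Qu$, use pre-catalysis to get $Bv = 0$, apply the weak unique continuation property of $B$ to conclude $v = 0$, then recover $u = Pv + TAu = 0$ from pre-regeneration. The paper's version is slightly terser (it simply asserts ``clearly $v = 0$ in $\Omega$'' where you spell out the locality of $Q$), but the argument is the same.
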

 \begin{proof}
Consider a non-empty open subset $\tilde \Omega$ and  a smooth function $u$ defined on  $\tilde \Omega$ satisfying $Au=0$ in  $\tilde \Omega$ and $u=0$ in  a any non-empty open subset $\Omega$  of $\tilde \Omega$. 
By assumption, $A$ is a differential pre-transmutation of $B$ by a philosopher's stone 
$(P,Q,S,T)$.
Set $v:=Qu $ on $\tilde \Omega$. Clearly $v=0$ in $ \Omega$. 
Moreover, by pre-catalysis, 
 $Bv = BQu =SAu =0$  on $\tilde  \Omega$. 
 Since $B$ satisfies the  weak unique continuation property,  
$v=0$ in $\tilde  \Omega$. 
Finally, by pre-regeneration,  $u = Pv= 0$ in $\tilde  \Omega$, 
\end{proof}
We now turn to the strong unique continuation. 
 \begin{defi} \label{def-SUC}
 A linear differential operator $A$ satisfies the strong unique continuation property if for any non-empty open subset $\Omega$  of $\R^d$, 
 for any $x_0$ in $\Omega$, 
  for any smooth function $u$ defined on  $\Omega$ satisfying $Au=0$ in  $ \Omega$ 
  and  that there exists $R > 0$ such that, for any $N \in \N$, 
  there exists $C_N > 0$, for any $r$ in $(0,R)$, 
  \begin{equation}
  \label{infini}
 \int_{B(x_0,r)}  \vert  u \vert^2 \, dx \leq C_N \, r^N ,
\end{equation}
then  $u=0$ in $ \Omega$. 
\end{defi}
Differential pre-transmutations transfer the strong unique continuation property toward elliptic operators.
\begin{theorem}\label{th:transfert-SUC}
If  an elliptic linear differential operator $A$ is a differential pre-transmutation of the  linear differential operator $B$ and if $B$ satisfies the strong unique continuation property, then 
 $A$ satisfies   the strong unique continuation property. 
\end{theorem}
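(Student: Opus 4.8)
The plan is to follow the structure of the proof of Theorem~\ref{th:transfert-WUC}, the only genuinely new point being the propagation of the quantitative vanishing condition~\eqref{infini} through the differential operator $Q$ of the philosophers' stone; this is precisely where the ellipticity of $A$ will be used.

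Writing the hypothesis out: $A$ is a differential pre-transmutation of $B$ by some philosophers' stone $(P,Q,S,T)$, so that $BQ=SA$ and $PQ+TA=\Id$. Given a non-empty open set $\Omega\subset\R^d$, a point $x_0\in\Omega$ and $u\in C^\infty(\Omega)$ with $Au=0$ in $\Omega$ satisfying~\eqref{infini} for some $R>0$, I would set $v:=Qu\in C^\infty(\Omega)$. By pre-catalysis, $Bv=BQu=SAu=0$ in $\Omega$. The key claim to be proved is that $v$ satisfies a bound of the form~\eqref{infini} at $x_0$; once this is available, the strong unique continuation property of $B$ gives $v=0$ in $\Omega$, and then pre-regeneration yields $u=PQu+TAu=Pv+T(Au)=0$ in $\Omega$ (using $Qu=0$ and $Au=0$ in $\Omega$), which is the conclusion.

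To bound $v$, let $q$ be the differential order of $Q$. Since the coefficients of $Q$ are smooth, hence bounded on a compact neighbourhood of $x_0$, one has $\int_{B(x_0,\rho)}|v|^2\,dx\le C\,\|u\|_{H^q(B(x_0,\rho))}^2$ for $\rho$ small. I would then invoke the standard interior elliptic estimate for $A$: rescaling about $x_0$ by $x\mapsto x_0+\rho x$ (equivalently, iterating a Caccioppoli-type inequality), the ellipticity of $A$ together with $Au=0$ yield a radius $r_0\in(0,R/2)$ with $\overline{B(x_0,2r_0)}\subset\Omega$ and a constant $C_q$, depending only on $A$, $Q$ and a fixed compact neighbourhood of $x_0$, such that
\[
\|u\|_{H^q(B(x_0,\rho))}^2\le C_q\,\rho^{-2q}\int_{B(x_0,2\rho)}|u|^2\,dx\qquad\text{for }0<\rho<r_0 .
\]
Feeding~\eqref{infini} (applied with the exponent $N+2q$) into this estimate would give, for every $N\in\N$ and every $\rho\in(0,r_0)$,
\[
\int_{B(x_0,\rho)}|v|^2\,dx\le C\,C_q\,\rho^{-2q}\,C_{N+2q}\,(2\rho)^{N+2q}=\big(C\,C_q\,2^{N+2q}\,C_{N+2q}\big)\,\rho^{N},
\]
which is exactly~\eqref{infini} for $v$, with $R$ replaced by $r_0$.

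The hard part is this scaled interior regularity estimate, and two points will need care: first, that the constant $C_q$ can be chosen uniform as $\rho\to 0$, which should follow from the smoothness of the coefficients of $A$ (the rescaled operators have uniformly bounded coefficients and a uniform ellipticity constant for small $\rho$, the lower-order terms only becoming more favourable); second, that it is genuinely the ellipticity of $A$ --- not of $B$ --- that supplies such an estimate, which is why no such hypothesis appeared in Theorem~\ref{th:transfert-WUC}, whose proof used no regularity of $u$. One could relax ellipticity to any hypothesis on $A$ yielding a comparable quantitative interior bound --- for instance hypoellipticity with a fixed finite loss of derivatives, at the cost of adjusting the powers of $\rho$ --- but the elliptic case gives the cleanest statement.
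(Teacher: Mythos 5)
Your proof is correct and follows essentially the same approach as the paper's: set $v:=Qu$, use pre-catalysis to get $Bv=0$, propagate the infinite-order vanishing condition to $v$ by a scaled interior elliptic estimate (which is precisely where ellipticity of $A$ enters), apply the strong unique continuation property of $B$ to conclude $v=0$, and finish with pre-regeneration $u=Pv+T(Au)=0$. The only cosmetic difference is that you split the estimate into the trivial bound $\int_{B(x_0,\rho)}|v|^2\le C\|u\|^2_{H^q(B(x_0,\rho))}$ followed by a rescaled interior estimate, whereas the paper writes the composite inequality directly; the underlying argument is identical.
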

 \begin{proof}
Consider  a non-empty open subset $\Omega$ of $\R^d$, a smooth function $u$ defined on  $ \Omega$ satisfying  $Au=0$ in  $ \Omega$,  $x_0$ in $\Omega$, $R > 0$ and  $(C_N )_N$ in $\R_+^*$ 
  such that 
   \eqref{infini} holds true. 
   By assumption, $A$ is a  differential pre-transmutation of $B$ by a philosopher's stone 
$(P,Q,S,T)$.
 Denote by $m \in \N$ the degree of $Q$, that is the maximal number of derivatives of the  linear differential operator $Q$. 
 Set $v:=Qu $ in $\Omega$. 
 Since $A$ is elliptic, by interior regularity and a scaling argument, there is $C >0$ such that for any $r$ in $(0,R/2)$, 
  \begin{equation}
  \label{}
 \int_{B(x_0,r)}  \vert  v \vert^2 \, dx \leq C r^{-2m}   \int_{B(x_0,2r)}  \vert  u \vert^2 \, dx     .
\end{equation}
Combining with the inequality  \eqref{infini}, with $2r$ instead of $r$ and $N+2m$ instead of $N$, we deduce that, for any $r$ in $(0,R/2)$, for any $N \in \N$, 
  \begin{equation*}
 \int_{B(x_0,r)}  \vert  v \vert^2 \, dx \leq \tilde C_N \,   r^{N}  ,
\end{equation*}
%\,
with $\tilde C_N := C C_{N+2m}  \, 2^{N}$.
But, by pre-catalysis 
 $Bv=0$ in $\Omega$, and 
  $B$ satisfies the  strong unique continuation property; so we deduce that $v=0$ in $  \Omega$. 
By pre-regeneration,    $u= Pv=0$ in $ \Omega$. 
\end{proof}

One may also prove that pre-transmutations transfer the unique continuation property across a hypersurface, which is the property that any solution of a PDE in a neighborhood of a hypersurface, which vanishes on one side the hypersurface, also vanishes on the other side, see  \cite[Chapter 5]{LLR} for more.
\smallskip

On the other hand, since, as it is, our notion of transmutation does not take care of the issue of boundary conditions, 
 it is not appropriate for transferring  the unique continuation property for  local Cauchy data. 
 However, an equivalence theorem, due to Lax,  \cite{Lax}, states that the latter is equivalent to a Runge-type property, which we investigate below under a strong form.

%%%%%%%%%%%%%%%%%%%%%%%%%%%%%%%%%%%%%%%%
\subsubsection{The Runge property}

We recall the following classical notion in approximation theory.

 \begin{defi} \label{RungeP}
 A linear differential operator $A$ satisfies the Runge property with point controls on  $\R^d$ if for any compact $K$ 
 for any  function $f$ in the image of $C^\infty(\R^d)$,  by $A$, 
 for any integer $k$, for any $\eps >0$, for any set  $E$ 
 with exactly one point  in each bounded connected component of  $\R^d \setminus K$, for any function $u$ defined on 
 a open neighbourhood $\Omega$ of $K$ and satisfying $Au=f$ in  $\Omega$, then there exists a function $ \overline u $ defined on the whole space $\R^d$ such that 
$A \overline u = f + g$
where $g$  is a linear combination of derivatives of Dirac masses located in $E$ and $ \|  \overline u - u \|_{C^k (K) }   \leq \eps$. 
\end{defi}
Let us mention that despite the case where  $f$ is the null function straightforwardly implies the general case, we kept this source term $f$ to exhibit the generality of the property.
\smallskip

Observe that the set $E$ can be infinite, as for example in the case of Mergelyan-Roth's swiss cheese, see 
   \cite[20, No. 2.4]{Mergelyan} and  \cite[Section 1, Hilfssatz 4]{Roth}. However the combination of derivatives of Dirac masses involved in Definition \ref{RungeP}
   only needs to be finite.
\smallskip

Let us also highlight that in the  particular case where $\R^d \setminus K$ is connected, then $E=\emptyset$ and the conclusion in the case where  $f$ is the null function 
 is that there exists   a function $ \overline u $ defined on the whole space $\R^d$ such that  $A \overline u =0$  in $\R^d$ and $ \|  \overline u - u \|_{C^k (K) }   \leq \eps$. 
\smallskip

This property is named after Runge who considered the case where $A = \overline{\partial}$ in  \cite{Runge}. Later this analysis has been extended to the Laplace operator, in any positive dimension, by Walsh in  \cite{Walsh}, see also 
\cite{DP,DGO,GARD}. 
\begin{theorem}\label{th:approxharm1}
The Laplace operator satisfies  the Runge property with point controls on  $\R^d$, for any $d \in \N^*$.
\end{theorem}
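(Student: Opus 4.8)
The plan is to prove Theorem \ref{th:approxharm1} directly, as it is the base case upon which later transfer results rest. Fix a compact set $K \subset \R^d$, a function $u$ harmonic on an open neighbourhood $\Omega$ of $K$ (the case of general $f$ in the image of $\Delta$ reduces to the homogeneous case by subtracting a fixed particular solution, as noted after Definition \ref{RungeP}), an integer $k$, an $\eps > 0$, and a set $E$ with one point in each bounded connected component of $\R^d \setminus K$. First I would reduce to the classical statement: it suffices to find $\overline u$ harmonic on $\R^d \setminus E$ with $\|\overline u - u\|_{C^k(K)} \le \eps$, since then $\Delta \overline u$ is supported on $E$ and, being a distribution supported at finitely many points, is a finite linear combination of derivatives of Dirac masses at those points; the combination is automatically finite even if $E$ is infinite because $\overline u - u$ small in $C^k(K)$ only constrains finitely many components of $\R^d \setminus K$ to be "active". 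More precisely, only the components meeting a neighbourhood of $K$ matter, and one builds $\overline u$ so that $\Delta \overline u = 0$ on all of $\R^d$ except at the finitely many points of $E$ lying in the relevant bounded components.

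Next I would invoke the classical Runge-Walsh approximation theorem for harmonic functions: if $K \subset \R^d$ is compact and $u$ is harmonic on a neighbourhood of $K$, then $u$ can be approximated uniformly on $K$ (in fact in $C^k(K)$, by interior elliptic estimates applied on a slightly smaller compact set, or simply by approximating on a compact neighbourhood of $K$ and then using that $C^k$ convergence on $K$ follows from uniform convergence on a neighbourhood via Cauchy-type estimates for harmonic functions) by functions that are harmonic on $\R^d \setminus E'$, where $E'$ contains exactly one point in each bounded component of $\R^d \setminus K$. The standard route is via the Hahn-Banach / Riesz-representation duality: a measure $\mu$ supported on $K$ that annihilates all functions harmonic near $E'$-punctured space must, by considering its Newtonian potential and using that the potential is harmonic off $\mathrm{supp}\,\mu$ and vanishes appropriately, annihilate $u$ as well; the unique continuation / maximum principle forces the potential to vanish, component by component, working from the unbounded component inward. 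When $\R^d \setminus K$ is connected, $E = \emptyset$ and one approximates by functions harmonic on all of $\R^d$ (i.e.\ by harmonic polynomials, using that solid harmonics are dense).

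For the $C^k$ upgrade: choose a compact neighbourhood $K_1$ of $K$ with $K_1 \subset \Omega$ and $K \subset \mathring K_1$, apply the $C^0$-approximation theorem on $K_1$ to get $\overline u$ harmonic off $E$ with $\|\overline u - u\|_{C^0(K_1)} \le \delta$; then $\overline u - u$ is harmonic on $\mathring K_1$, so by the standard interior estimate $\|\overline u - u\|_{C^k(K)} \le C_{k,K,K_1}\,\|\overline u - u\|_{C^0(K_1)} \le C_{k,K,K_1}\,\delta$, and we take $\delta = \eps / C_{k,K,K_1}$. I expect the main obstacle to be the bookkeeping around the set $E$ when it is infinite (the swiss-cheese situation) and the careful "peeling" argument that the annihilating potential vanishes on the unbounded component and then propagates inward through the bounded components one at a time, each time invoking analyticity of harmonic functions (unique continuation) together with continuity of the potential across $\partial K$; the single-point puncture in each bounded component is exactly what is needed so that a harmonic function on the punctured component with a prescribed singularity there exists. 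Everything else — the reduction to the homogeneous case, the distributional identification of $\Delta \overline u$ with a finite sum of derivatives of Diracs, and the $C^k$ estimate — is routine.
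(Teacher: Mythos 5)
Your proposal takes a genuinely different route from the paper. The paper essentially treats Theorem~\ref{th:approxharm1} as a citation to the classical literature (Walsh \cite{Walsh}, see also \cite{DP,DGO,GARD}); the only argument actually given is in the appendix, and only for the special case $d=2$, $f=0$, finitely path-connected domain: there Axler's logarithmic conjugation theorem (Lemma~\ref{Axler}) writes $u = \Re f + \sum_j c_j \log|\cdot - a_j|$ with $f$ holomorphic, one applies the complex Runge theorem to $f$ with poles at the $a_j$, and takes real parts. Your sketch instead outlines the potential-theoretic Hahn--Banach duality argument valid in any dimension (a measure on a compact neighbourhood $K_1$ annihilating all admissible approximants has a Newtonian potential that vanishes off $K_1$, first on the unbounded component and then, by peeling and unique continuation, component by component), followed by the $C^0\!\to\!C^k$ upgrade on the smaller $K$ via interior elliptic estimates. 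That duality argument is what the cited references actually prove; it is heavier, but it covers all $d$ and the swiss-cheese situation, which the paper's appendix does not.

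Two places to tighten. First, applying the $C^0$ approximation theorem on $K_1$ produces poles located in the bounded components of $\R^d\setminus K_1$, and these need not be the prescribed points of $E$ (which sits in the bounded components of $\R^d\setminus K$); a pole-pushing step inside each bounded component of $\R^d\setminus K$ is needed to move them onto $E$. That same step is what guarantees $\overline u - u$ is harmonic on a full neighbourhood of $K$, so that the interior estimate you invoke is legitimate. Second, your justification that only finitely many poles are needed is loosely phrased; the clean argument is that any bounded component $C$ of $\R^d\setminus K$ not entirely contained in $\Omega$ has $\partial C\subset K$ yet meets $\R^d\setminus\Omega$, hence contains a ball of radius $\mathrm{dist}(K,\R^d\setminus\Omega)>0$, so only finitely many such components exist; the remaining components lie in $\Omega$, where $u$ is already harmonic, and require no singularity at all.
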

In the case of $2$D
 finitely connected region, with $f=0$,  this result  can be straightforwardly transferred  from the  Runge result on holomorphic functions by using the logarithmic conjugation theorem, see the appendix. 
\medskip

Let us also mention that such results have been extended to a wide class of elliptic operators by Lax in \cite{Lax}, Malgrange in \cite{Malgrange} and Browder in \cite{Browder}. 
The proofs of these results split into two categories: some based on an integral representation of the local solution, a discretization into a finite Riemann sum and a final poles' pushing step; the others are based on the duality with unique continuation properties associated with data on a hypersurface, after Holmgrem's and Carleman's approaches. 
\medskip
 
Silvern  differential transmutations  transfer the Runge property with point controls.
\begin{theorem}\label{th:transfert}
If  the linear differential operator $A$ is a silvern differential transmutation of the  linear differential operator $B$ and if $B$ satisfies  the Runge property with point controls on  $\R^d$, then 
 $A$ satisfies  the Runge property with point controls on  $\R^d$. 
\end{theorem}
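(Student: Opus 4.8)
The plan is to transfer the Runge property from $B$ to $A$ by using the silvern structure to convert a local solution of $Au=f$ into a local solution of an equation governed by $B$, apply the Runge property of $B$ there, and then push the approximating global solution back through the operator $P$. So the first step is: given $\Omega$, $K$, $E$, $k$, $\eps$, and $u$ with $Au=f$ on $\Omega$, set $v:=Qu$ on $\Omega$. By pre-catalysis, $Bv=BQu=SAu=Sf$ on $\Omega$, and $Sf$ lies in the image of $C^\infty(\R^d)$ under $B$ (since $f=Aw$ globally gives $Sf=SAw=BQw$, using pre-catalysis again on the whole space). Thus $v$ is a local solution on $\Omega$ of a $B$-equation with a globally $B$-attainable right-hand side.

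Next I would apply the Runge property of $B$ with the \emph{same} compact $K$ and the \emph{same} point set $E$, but with a tolerance $\eps'$ and a regularity index $k'$ to be chosen. This yields $\overline v$ defined on all of $\R^d$ with $B\overline v = Sf + g$, where $g$ is a finite linear combination of derivatives of Dirac masses supported in $E$, and $\|\overline v - v\|_{C^{k'}(K)}\le \eps'$. Now define $\overline u := P\overline v + Tf$ on $\R^d$. By post-catalysis and pre-regeneration computed on the whole space, $A\overline u = AP\overline v + ATf = RB\overline v + ATf = R(Sf+g) + ATf = (RS+AT)f + Rg$; but a silvern transmutation need not be post-regenerated, so here I should instead use the \emph{pre}-regeneration route: $u=PQu+TAu=Pv+Tf$ on $\Omega$, hence on $\Omega$ we have $\overline u - u = P(\overline v - v)$, and to compute $A\overline u$ globally I use that $A\overline u - f = A(P\overline v + Tf) - Aw = AP\overline v + ATf - Aw$; rewriting $f=Aw$ and using $AP=RB$, $APw=RBw$... this is getting delicate, so the cleaner move is: since $f$ is globally $B$-attainable write $f=A w$? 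No --- $f$ is in the image of $A$ only on $\Omega$. The right normalization is to note $Sf = B(Qw)$ globally where $f=Aw$ globally is \emph{not} assumed; rather $f$ lies in the image of $C^\infty(\R^d)$ under $A$ by hypothesis, say $f=Aw$, $w\in C^\infty(\R^d)$; then $Sf=SAw=BQw$ globally, so $Sf$ is $B$-attainable, and $A\overline u = A(P\overline v+Tf)=RB\overline v+ATf=R(Sf+g)+ATf=RSf+ATf+Rg$. Using Proposition~\ref{prop-post-r} (silvern transmutations are weakly post-regenerated), $(RS+AT-\Id)$ annihilates $\Im A$, hence kills $f=Aw$, giving $RSf+ATf=f$. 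Therefore $A\overline u = f + Rg$, and $Rg$ is again a finite linear combination of derivatives of Dirac masses supported in $E$ (since $R$ is a differential operator and the support of a derivative of $\delta_a$ under a differential operator stays at $a$).

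Finally I would handle the approximation estimate: on $\Omega\supset K$ we have $\overline u - u = P\overline v + Tf - (Pv+Tf) = P(\overline v - v)$ using $u=Pv+Tf$ on $\Omega$ from pre-regeneration applied to $Au=f$. Since $P$ is a differential operator of some order $p$, $\|\overline u - u\|_{C^k(K)} = \|P(\overline v-v)\|_{C^k(K)} \le C_{P,K}\|\overline v - v\|_{C^{k+p}(K)}$, so choosing $k':=k+p$ and $\eps' := \eps/C_{P,K}$ in the application of $B$'s Runge property gives $\|\overline u - u\|_{C^k(K)}\le\eps$. Collecting: $\overline u$ is globally defined, $A\overline u = f + Rg$ with $Rg$ a finite combination of derivatives of Diracs in $E$, and $\overline u$ approximates $u$ in $C^k(K)$. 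The main obstacle, and the point needing the most care, is the bookkeeping that $A\overline u = f + (\text{point controls})$: one must invoke Proposition~\ref{prop-post-r} (weak post-regeneration of silvern transmutations) exactly on $\Im A$, verify that $f$ being globally $A$-attainable is genuinely what the definition of the Runge property supplies, and check that applying the differential operator $R$ to a sum of derivatives of Dirac masses keeps them point-supported in $E$ with finitely many terms — all of which work, but require that $f$ be taken in the image of $C^\infty(\R^d)$ under $A$ rather than merely $C^\infty$ on $\Omega$, which is precisely the hypothesis in Definition~\ref{RungeP}.
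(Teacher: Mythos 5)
Your proof is correct and follows essentially the same route as the paper's: set $v:=Qu$, use pre-catalysis to reduce to a $B$-equation with globally $B$-attainable right-hand side $Sf$, apply $B$'s Runge property, define $\overline u:=P\overline v+Tf$, compute $A\overline u$ via post-catalysis, invoke Proposition~\ref{prop-post-r} (weak post-regeneration) on $\Im A$ to collapse $(RS+AT)f$ to $f$, and use pre-regeneration plus the finite order of $P$ for the $C^k(K)$ estimate. The only cosmetic differences are that you pre-scale $\eps'$ to land exactly on $\eps$ (the paper is content with $C\eps$), and the mid-proof false start before you correctly settle on the weak post-regeneration argument.
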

 \begin{proof}
Consider a compact $K$ 
an integer $k$, a positive real $\eps $, a set  $E$ 
 with exactly one point  in each bounded connected component of  $\R^d \setminus K$, a function $f$ in the image of $C^\infty(\R^d)$  by $A$ 
 and a function $u$ defined on 
 a open neighbourhood $\Omega$ of $K$  satisfying $Au=f$ in  $\Omega$.
 By assumption, $A$ is a silvern differential transmutation of $B$ by a philosopher's stone 
$(P,Q,R,S,T)$.
  Denote by $m \in \N$ the degree of $P$, that is the maximal number of derivatives. 
  Set $v:=Qu $ on $\Omega$. 
Then, on $\Omega$, by pre-catalysis, 
we have that $Bv = Sf$. 
Since $f$ is in the image of $C^\infty(\R^d)$  by $A$, by pre-catalysis, $Sf$ is in the image of $C^\infty(\R^d)$  by $B$.
Since $B$ satisfies  the Runge property with point controls on  $\R^d$,  there exists  $ \overline v $ defined on the whole space $\R^d$ such that 
$B \overline v = Sf + h$ where 
 $h$ is a linear combination of derivatives of Dirac masses located in $E$ and $ \|  \overline v - v\|_{C^{k+m}  (K) }   \leq \eps$. 
Set $\overline u := P \overline v + Tf$  and $g := Rh$ on  $\R^d$.
Since  $R$ is a linear differential operator with $C^\infty$  coefficients,  $g$  is also  a linear combination of derivatives of Dirac masses in  $E$. 
Moreover,  by post-catalysis, 
$A \overline u = RB \overline v + ATf= (RS+AT)f + Rh$.
By Proposition \ref{prop-post-r}, 
 $A$ is weakly post-regenerated by $(R,S,T)$ 
 and therefore $A \overline u = f + g$. 
 On the other hand,  by pre-regeneration,  $u =  Pv + Tf$  on $\Omega$  so that 
 $ \|  \overline u - u \|_{C^k (K) }   \leq C \|  \overline v - v\|_{C^{k+m}  (K) }   \leq C \eps$, where $C>0$ depends only on the coefficients of $P$.
 Therefore $A$ satisfies  the Runge property with point controls on  $\R^d$. 
\end{proof}

Let us highlight that quantitative versions of the  Runge property, which evaluate the 
cost of approximation, can be transferred too. 
In this direction, let us consider the rate of convergence by polynomial solutions. 
Given a linear differential operator $A$,  for any $n$ in $\N$, let $\mathcal P_n (A)$ denote the space of the polynomials $p$ of order less than $n$ which satisfy $Ap = 0$ on $\R^d$.

 \begin{defi} \label{poly-Runge}
 A linear differential operator $A$ satisfies the polynomial Runge property on  $\R^d$ 
  if for any compact $K$ of $\R^d$ such that $\R^d \setminus K$ is connected,  for any open neighbourhood $\Omega$ of $K$, there exists $\rho \in (0,1)$ such that 
 for any integer $k$, for any function $u$ defined on  $\Omega$ and satisfying $Au=0$ in  $\Omega$, 
  then there exists $C>0$ such that for any $n$ in $\N$, 
  $$ \inf_{p \in \mathcal P_n (A)} \,  \| u - p \|_{C^{k} (K)}  \leq C \rho^n .$$
 \end{defi}
The following result can be found in  \cite{Walsh} for the $2$D case and 
in \cite{And} and \cite[Theorem 3.1]{BL} for the general case. 
\begin{theorem}\label{th-lap-poly}
For any $d \in \N^*$,
 the Laplace operator on $\R^d$ satisfies  the  polynomial Runge property on  $\R^d$. 
\end{theorem}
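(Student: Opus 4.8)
The plan is to reduce Theorem \ref{th-lap-poly} to the known planar case via tensorization and then a dimension-reduction argument, in parallel with the structure already used for Theorem \ref{th:approxharm1}. First I would observe that the Laplace operator $\Delta_d$ on $\R^d$ is not literally a tensorization of lower-dimensional Laplacians, so the direct route is to appeal to the cited references: \cite{Walsh} for $d=2$ and \cite{And}, \cite[Theorem 3.1]{BL} for general $d$. But since the excerpt invites us to prove it, the substantive approach is the classical one: fix a compact $K\subset\R^d$ with connected complement and an open neighbourhood $\Omega\supset K$, and choose $\rho\in(0,1)$ governing the geometry (for instance, the ratio of an exhaustion of $K$ by level sets of a harmonic-type potential, or the reciprocal of a conformal-radius-like quantity comparing $K$ to $\R^d\setminus\Omega$). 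Given a harmonic $u$ on $\Omega$, one expands $u$ in an orthogonal-polynomial-type series adapted to $K$ — in the plane these are the Faber polynomials / harmonic polynomials, and in higher dimensions the solid harmonics — and estimates the tail.

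The key steps, in order, would be: (i) By Theorem \ref{th:approxharm1}, the Laplace operator satisfies the Runge property with point controls; since $\R^d\setminus K$ is connected, the set $E$ is empty, so for every $\varepsilon$ there is a \emph{global} harmonic function approximating $u$ on $K$ in $C^k$. (ii) Upgrade the qualitative global approximant to a polynomial approximant: a global harmonic function on $\R^d$ is, on any ball, the locally uniform limit of its truncated solid-harmonic (hence polynomial) expansion, and these truncations lie in $\mathcal P_n(\Delta)$; so polynomial solutions are $C^k(K)$-dense in $\{u:\Delta u=0 \text{ near } K\}$. (iii) Quantify the rate: pick a ball $B(0,\rho_2)\supset K$ and a sphere of radius $\rho_1<\rho_2$ such that $u$ extends harmonically to a neighbourhood of $\overline{B(0,\rho_2)}$ after the first approximation step, take its solid-harmonic expansion $u=\sum_j h_j$ with $h_j$ homogeneous of degree $j$, and use the standard estimate $\|h_j\|_{C^k(B(0,\rho_1))}\le C (\rho_1/\rho_2)^j \|u\|_{L^\infty(\partial B(0,\rho_2))}$ (a Cauchy/Bernstein-type inequality for homogeneous harmonic polynomials), so that $\|u-\sum_{j<n}h_j\|_{C^k(K)}\le C\rho^n$ with $\rho:=\rho_1/\rho_2\in(0,1)$ depending only on $K$ and $\Omega$, not on $u$ or $n$. (iv) Absorb the first approximation step: choose the intermediate harmonic extension so that its error on $K$ is already $\le C\rho^n$ for the relevant range, or iterate the two steps, so the constant $C$ ends up depending on $\|u\|_{C^k}$ over a slightly larger compact inside $\Omega$, as required by Definition \ref{poly-Runge}.

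The main obstacle I expect is step (iii): making the exponential rate $\rho^n$ \emph{uniform in $u$} and extracting a clean geometric $\rho$ depending only on the pair $(K,\Omega)$. In the plane one has conformal maps and the genuine machinery of Faber polynomials, which gives sharp constants; in higher dimensions one must substitute the theory of harmonic polynomial approximation on compacta (Walsh–Bernstein-type theorems, as developed precisely in \cite{And} and \cite{BL}), and the delicate point is that the right "$\rho$" is not simply a ratio of radii but is governed by a suitable capacity/Green-function comparison between $K$ and $\partial\Omega$; choosing balls is only legitimate after one has already replaced $u$ by a harmonic function defined on a full neighbourhood of a closed ball, which is exactly what Theorem \ref{th:approxharm1} (with $E=\emptyset$) buys us. Once that reduction is in place, the homogeneous-harmonic expansion and its derivative bounds are routine, and the theorem follows; for a fully self-contained treatment I would simply cite \cite{Walsh} and \cite[Theorem 3.1]{BL} for the quantitative core.
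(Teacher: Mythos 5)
The paper does not actually prove Theorem~\ref{th-lap-poly}: the sentence that precedes the statement \emph{is} the proof, a bare citation of \cite{Walsh} for $d=2$ and of \cite{And} and \cite[Theorem 3.1]{BL} for general~$d$. Your proposal ends by deferring the ``quantitative core'' to exactly the same references, so at the level of what is actually established, the two are in agreement.

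However, the self-contained sketch you offer along the way has a genuine gap at step~(iv), the ``absorption'' step, and it is worth naming. Once you replace $u$ by a globally harmonic $\bar u$ with $\|\bar u-u\|_{C^k(K)}\le\delta$ via Theorem~\ref{th:approxharm1}, the solid-harmonic truncation error of $\bar u$ at degree~$n$ on $K$ is controlled by $C(\bar u)\,\rho^n$, where $C(\bar u)$ is some norm of $\bar u$ on a larger sphere. The qualitative Runge theorem gives no control on $C(\bar u)$ in terms of $u$ and~$\delta$; the combined bound is $\delta+C(\bar u)\rho^n$, which does not tend to zero as $n\to\infty$ (for large $n$ the first term dominates). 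A diagonal choice $\delta_n=\rho^n$, with a different global approximant $\bar u_n$ for each $n$, founders for the same reason: nothing prevents $C(\bar u_n)$ from growing faster than $\rho^{-n}$. What would close the gap is a \emph{quantitative} Runge theorem bounding the size of the global extension in terms of the allowed error, but that quantitative content is precisely the Bernstein--Walsh-type estimate you are trying to derive, so the reduction does not close on its own. The route actually taken in \cite{Walsh}, \cite{And}, \cite{BL} expresses the rate $\rho$ directly through a potential-theoretic quantity attached to the pair $(K,\Omega)$ and bypasses the qualitative Runge theorem entirely; the latter emerges as a corollary rather than serving as an input. (The opening remark about ``tensorization'' is a red herring, but you discard it yourself, so it does not affect the substance.)
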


Theorem \ref{th:transfert} can be adapted to  transfer the polynomial Runge property in the case where the first auxiliary differential operator $P$ in the philosopher's stone  
 has polynomial coefficients. 

\begin{theorem}\label{th:transfert-poly}
If  the linear differential operator $A$ is a silvern differential transmutation of the  linear differential operator $B$  by  
$(P,Q,R,S,T)$ where the first stone $P$ has polynomial coefficients 
and if $B$ satisfies  the polynomial  Runge property on  $\R^d$, then 
 $A$ satisfies  the polynomial  Runge property on  $\R^d$. 
\end{theorem}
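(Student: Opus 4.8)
The plan is to adapt the proof of Theorem \ref{th:transfert} almost verbatim, keeping track of the degrees of the polynomial approximants. First I would unpack the hypotheses: $A$ is a silvern differential transmutation of $B$ by a philosopher's stone $(P,Q,R,S,T)$, with $P$ a linear differential operator with polynomial coefficients, and $B$ satisfies the polynomial Runge property. Fix a compact $K$ with $\R^d\setminus K$ connected and an open neighbourhood $\Omega$ of $K$; the goal is to produce $\rho\in(0,1)$ such that every local solution $u$ of $Au=0$ on $\Omega$ is approximated at rate $\rho^n$ by polynomials in $\mathcal P_n(A)$.

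The key steps, in order: (1) Given $u$ with $Au=0$ on $\Omega$, set $v:=Qu$ on $\Omega$; by pre-catalysis $Bv=SAu=0$ on $\Omega$. (2) Apply the polynomial Runge property of $B$ to $v$: there is $\rho\in(0,1)$ (depending only on $K$ and $\Omega$, hence transferable) and, given $k$, a constant $C>0$ such that for every $n$ there is $q_n\in\mathcal P_n(B)$ with $\|v-q_n\|_{C^{k+m}(K)}\le C\rho^n$, where $m$ is the order of $P$. (3) Define the candidate approximant $p_n:=Pq_n$. Since $P$ has polynomial coefficients of some bounded degree, say $\delta$, and $q_n$ has degree $<n$, the function $p_n=Pq_n$ is a polynomial of degree $<n+\delta$ (the differentiations in $P$ only lower the degree). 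Moreover, using $Bq_n=0$ on $\R^d$ together with post-catalysis $AP=RB$, one gets $Ap_n=APq_n=RBq_n=0$ on $\R^d$, so $p_n\in\mathcal P_{n+\delta}(A)$. (4) Estimate the error: since $u=PQu=Pv$ on $\Omega$ by pre-regeneration (the $T$ term drops because $Au=0$), we have $u-p_n=P(v-q_n)$, hence $\|u-p_n\|_{C^k(K)}\le C'\|v-q_n\|_{C^{k+m}(K)}\le C'C\rho^n$, where $C'$ depends only on the coefficients of $P$ on $K$. (5) Reindex: writing $N=n+\delta$, we have $p_n\in\mathcal P_N(A)$ and $\rho^n=\rho^{-\delta}\rho^N$, so $\inf_{p\in\mathcal P_N(A)}\|u-p\|_{C^k(K)}\le (C'C\rho^{-\delta})\rho^N$, which is the required estimate with the same $\rho$ and an adjusted constant; for $N<\delta$ one absorbs finitely many terms into the constant.

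The main obstacle — really the only subtle point — is step (3): one must be sure that applying the differential operator $P$ with polynomial coefficients to a polynomial of degree $<n$ produces a polynomial whose degree grows by only a bounded additive amount independent of $n$, and that membership in $\mathcal P_n(B)$ is genuinely preserved under $P$ in the sense of yielding membership in $\mathcal P_{n+\delta}(A)$. This is where the polynomial-coefficient hypothesis on $P$ is essential: a general smooth-coefficient $P$ would destroy the polynomial structure entirely. Everything else is a bookkeeping copy of the proof of Theorem \ref{th:transfert}, noting that the point-control term and the operators $R,S,T$ play no role here because $\R^d\setminus K$ is connected (so $E=\emptyset$) and we work with $f=0$; in particular no weak post-regeneration argument is needed.
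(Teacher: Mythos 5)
Your proposal is correct and follows essentially the same route as the paper's proof: set $v=Qu$, invoke the polynomial Runge property of $B$, push the approximants through $P$ using post-catalysis to land in $\mathcal P_{n+\delta}(A)$ (the paper calls your $\delta$ its $m$ and your $m$ its $\ell$), and close via pre-regeneration $u=PQu$; both proofs also note that the rate $\rho$ survives the reindexing. Your extra remark that $R,S,T$ and the point-control mechanism are irrelevant here is accurate, and your explicit degree bookkeeping in step (3) is precisely the implicit content of the paper's line ``$P\,\mathcal P_n(B)\subset\mathcal P_{n+m}(A)$''.
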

 \begin{proof}
  Denote by $\ell \in \N$ the maximal number of derivatives and by $m$ the maximal power of $x$ of $P$. 
  Let  $K$ be a compact of $\R^d$ and $\Omega$ an open neighbourhood  of $K$. 
  Since $B$ satisfies  the polynomial  Runge property on  $\R^d$, there exists $\rho \in (0,1)$ such that 
 for any integer $k$, 
 for any function $v$ defined on $\Omega$ of $K$ and satisfying $Bv=0$ in  $\Omega$, 
  then there exists $C>0$ such that for any $n$ in $\N$, 
$$
\inf_{p \in \mathcal P_n (B)} \,  \| v - p \|_{C^{k} (K)}  \leq C \rho^n .
$$
Consider now  any integer $k$ and  any function $u$ defined on  $\Omega$ and satisfying $Au=0$ in  $\Omega$.
 By pre-catalysis, $Qu$ satisfies $BQu=0$ in  $\Omega$. Thus, by what precedes, 
 there exists $C_1>0$ such that for any $n$ in $\N$, 
  \begin{equation}
  \label{GA}
   \inf_{q \in \mathcal P_n (B)} \,  \| Qu - q \|_{C^{k+\ell}(K)}  \leq C_1 \rho^n .
   \end{equation}
By post-catalysis, for any $n$ in $\N$, we have that $P \mathcal  P_n (B) \subset \mathcal  P_{n+m} (A)$.
 Moreover,  by pre-regeneration,  $u =  PQu$  on $\Omega$  so that 
  $$ \inf_{p \in \mathcal P_{n+m} (A)} \,  \| u - p \|_{C^{k} (K)}  \leq 
  C_2   \inf_{q \in \mathcal P_{n} (B)} \,  \| Qu - q \|_{C^{k+\ell}(K)},$$
 where $C_2$ only depends on the coefficients of $P$ and of $K$. Therefore, by \eqref{GA}
  $$ \inf_{p \in \mathcal P_{n+m} (A)} \,  \| u - p \|_{C^{k} (K)}  \leq 
  C  \rho^{n+m} ,$$
 where $C \geq C_1 C_2 \rho^{-m} $,  for any $n$ in $\N$, which concludes the proof. 
 \end{proof}
Let us highlight that in the proof above the rate $\rho$ is preserved during the transfer from $B$ to $A$.

 Let us also mention that one may also transfer some quantitive estimates in the spirit of 
  \cite{RS} with similar reasoning.
\medskip

That poles are mandatory in the domain's holes is highlighted by the following lines.
 \begin{defi} \label{pole-less} 
 We say that a linear differential operator $A$  satisfies the pole-less Runge property  for  a pair of open subsets $\Omega$ and  $\tilde \Omega$ of $\R^d$ with 
 $\Omega \subset \tilde  \Omega$  if 
 for any compact  $K \subset \Omega$ 
 for any integer $k$, for any $\eps >0$, for any function $u$ defined on 
 a open neighbourhood $\Omega$ of $K$ and satisfying $Au=0$ in  $\Omega$,  there exists a function $ \overline u $ defined on the whole space $\R^d$ such that 
$A \overline u = 0$ and $ \|  \overline u - u \|_{C^k (K) }   \leq \eps$. 
\end{defi}
 \begin{defi} \label{detection} 
 We say that a linear differential operator $A$ has the detection property of simply connected complements 
  if  for any open subset $\Omega$ of $\R^d$, for any open set $\tilde \Omega$ of $\R^d$ with 
 $\Omega \subset \tilde  \Omega$, if $A$ satisfies the pole-less Runge property then 
  $\tilde  \Omega \setminus  \Omega$ is simply connected. 
 \end{defi}
It is well-known that, as a consequence of  the maximum principle, we have the following result for the Laplace operator, in any positive dimension, see for instance \cite{AG}. 
\begin{theorem}\label{11}
The Laplace operator has the detection property of simply connected complements, for any dimension $d \in \N^*$.
\end{theorem}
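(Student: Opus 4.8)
The plan is to prove the contrapositive: I would assume that $\tilde\Omega\setminus\Omega$ is \emph{not} simply connected and exhibit a harmonic function $u$ on $\Omega$ which cannot be approximated by globally harmonic functions in the $C^k(K)$ norm for a suitable compact $K\subset\Omega$. Since $\tilde\Omega\setminus\Omega$ fails to be simply connected, there is a bounded connected component $\mathcal O$ of $\R^d\setminus\Omega$ contained in $\tilde\Omega$; pick a point $x_0\in\mathcal O$. The natural candidate for the obstruction is the fundamental solution of the Laplace operator centered at $x_0$, i.e.\ $u(x):=E(x-x_0)$ where $E$ is Newtonian potential ($-\tfrac1{2\pi}\log|x|$ for $d=2$, $c_d|x|^{2-d}$ for $d\geq 3$). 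This $u$ is harmonic on $\R^d\setminus\{x_0\}$, hence on $\Omega$, since $x_0\notin\Omega$.

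Next I would choose $K\subset\Omega$ to be a smooth compact hypersurface (a sphere, say) enclosing $\mathcal O$, together with a thin shell around it, so that $K$ separates $x_0$ from infinity within $\R^d\setminus\Omega$. The core of the argument is a flux/mean-value computation: for any function $\overline u$ harmonic on all of $\R^d$ (or even just on a neighborhood of the closed region bounded by $K$), the flux $\int_{S}\partial_n\overline u\,d\sigma$ through a sphere $S\subset K$ enclosing $\mathcal O$ vanishes by the divergence theorem and harmonicity, whereas $\int_{S}\partial_n u\,d\sigma = -1$ (normalization of the fundamental solution). Hence $\bigl|\int_S \partial_n(u-\overline u)\,d\sigma\bigr| = 1$ for every global harmonic $\overline u$, and since this flux functional is continuous with respect to $\|\cdot\|_{C^1(K)}$ (it only involves first derivatives integrated over the fixed hypersurface $S$), we get $\|u-\overline u\|_{C^k(K)}\geq \|u-\overline u\|_{C^1(K)}\geq c>0$ for all such $\overline u$, with $c$ independent of $\overline u$. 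Taking $\eps<c$ shows the pole-less Runge property fails for the pair $(\Omega,\tilde\Omega)$, contradiction.

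A point requiring slight care is making sure the candidate $u$ is genuinely harmonic on \emph{all} of $\Omega$: this is exactly why I pick $x_0$ in a bounded component of the complement $\R^d\setminus\Omega$ rather than in $\tilde\Omega\setminus\Omega$ directly. One should note $\tilde\Omega\setminus\Omega$ not simply connected forces the existence of such a bounded complementary component enclosed by a noncontractible loop (in $d=2$ this is immediate; in higher $d$ one uses that a bounded complementary component inside $\tilde\Omega$ is what obstructs simple connectivity of $\tilde\Omega\setminus\Omega$ — more precisely, if every bounded component of $\R^d\setminus\Omega$ meeting $\tilde\Omega$ were absent, $\tilde\Omega\setminus\Omega$ would retract nicely; this is the standard potential-theoretic setup, cf.\ \cite{AG}). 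I expect the main obstacle to be the topological bookkeeping: correctly extracting the enclosing sphere $S\subset K\subset\Omega$ with $x_0$ in the bounded component of $\R^d\setminus S$, and in higher dimensions phrasing "not simply connected" in a way that actually yields such an $x_0$. The analytic heart — the flux argument — is short and robust, and the continuity of the flux functional in $C^1(K)$ is elementary.

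Finally, I would remark that this argument is the expected one: it is precisely the maximum-principle / flux obstruction showing that Dirac masses (point force controls) in the holes are genuinely necessary, complementing Theorem~\ref{th:approxharm1}, and that the constant $c$ in the lower bound can even be made explicit in terms of the geometry of $S$ and the normalization of $E$.
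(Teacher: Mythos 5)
The paper gives no proof of Theorem~\ref{11}: it is stated with a reference to \cite{AG} and a one-line attribution to the maximum principle, so there is no internal argument to compare against. Your analytic mechanism --- placing the fundamental solution $E(\cdot-x_0)$ at a point $x_0$ in a hole of $\Omega$ and showing via the divergence theorem that its flux across a separating hypersurface $S\subset\Omega$ is $\pm 1$ while that of any entire harmonic $\overline u$ is $0$, hence $\|u-\overline u\|_{C^1(K)}\geq c>0$ --- is correct, is the standard way to show that poles in the holes are unavoidable, and is precisely the maximum-principle consequence the paper alludes to.

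The topological reduction you defer as ``bookkeeping'' is, however, a genuine gap. It is not true that ``$\tilde\Omega\setminus\Omega$ not simply connected'' forces the existence of a bounded connected component of $\R^d\setminus\Omega$ inside $\tilde\Omega$: the non-trivial topology of $\tilde\Omega\setminus\Omega$ can be carried entirely by $\tilde\Omega$. Take $d=2$, $\Omega=\{x_1<0\}$ and $\tilde\Omega=\R^2\setminus\{(2,0)\}$; then $\tilde\Omega\setminus\Omega$ is a punctured closed half-plane (not simply connected), while $\R^2\setminus\Omega$ is the closed right half-plane, connected and unbounded, so there is no point $x_0$ at which to place the fundamental solution. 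Your claim that absence of such a component would make $\tilde\Omega\setminus\Omega$ ``retract nicely'' is therefore not correct. Moreover, for $d\geq 3$ simple connectivity is not the relevant invariant for harmonic Runge approximation (for a compact solid torus $K$ the complement $\R^3\setminus K$ is connected and approximation by entire harmonic functions works, although $K$ is not simply connected); what matters is connectivity of $\R^d\setminus K$. Finally, Definitions~\ref{pole-less} and~\ref{detection} are not unambiguous as written (the set $\tilde\Omega$ never actually occurs in the body of Definition~\ref{pole-less}), which makes a fully rigorous proof against the literal statement delicate. Your flux argument supplies the correct analytic content, but the passage from the stated topological hypothesis to the existence of a bounded enclosed complementary component must either be repaired under additional hypotheses on $\tilde\Omega$ or replaced by arguing directly with compacta $K\subset\Omega$ for which $\R^d\setminus K$ is disconnected.
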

Moreover the  detection property of simply connected complements can be transferred backward by silvern differential transmutations.
\begin{theorem}\label{th:transfert-11}
If  the linear differential operator $A$ is a silvern differential transmutation of the  linear differential operator $B$ and if $A$ has  the detection property of simply connected complements, then  $B$  also has  the detection property of simply connected complements.
\end{theorem}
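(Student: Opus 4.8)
The plan is to reduce the statement to the detection property already granted for $A$, by transferring the \emph{pole-less} Runge property from $B$ to $A$. So I fix open subsets $\Omega \subset \tilde\Omega$ of $\R^d$, assume that $B$ satisfies the pole-less Runge property for the pair $(\Omega,\tilde\Omega)$, and I aim to show that $A$ then satisfies the pole-less Runge property for the same pair. Once this is done, the detection property of $A$ applies to $(\Omega,\tilde\Omega)$ and yields that $\tilde\Omega\setminus\Omega$ is simply connected; since $\Omega$ and $\tilde\Omega$ were arbitrary, this is exactly the conclusion sought for $B$.

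For the transfer step, write the silvern differential transmutation of $B$ realized by $A$ through a philosopher's stone $(P,Q,R,S,T)$, and let $m\in\N$ be the order of the differential operator $P$. Given a compact $K\subset\Omega$, an integer $k$, a real $\eps>0$ and a smooth $u$ on $\Omega$ with $Au=0$ in $\Omega$, I would set $v:=Qu$, which is smooth on $\Omega$ and satisfies $Bv=BQu=SAu=0$ in $\Omega$ by pre-catalysis. Because $B$ enjoys the pole-less Runge property, there is a smooth function $\overline v$ defined on $\R^d$ with $B\overline v=0$ and $\|\overline v-v\|_{C^{k+m}(K)}\le \eps/C$, where $C=C(P,K)>0$ is the constant in the elementary bound $\|Pw\|_{C^{k}(K)}\le C\|w\|_{C^{k+m}(K)}$, valid since $P$ has $C^\infty$ coefficients. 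Then $\overline u:=P\overline v$ is defined on all of $\R^d$, and $A\overline u=AP\overline v=RB\overline v=0$ by post-catalysis, so $\overline u$ is a global solution of the homogeneous $A$-equation. Finally, pre-regeneration gives $u=PQu+TAu=Pv$ on $\Omega$, whence $\|\overline u-u\|_{C^{k}(K)}=\|P(\overline v-v)\|_{C^{k}(K)}\le C\|\overline v-v\|_{C^{k+m}(K)}\le\eps$. This establishes the pole-less Runge property for $A$ on $(\Omega,\tilde\Omega)$, and the proof is concluded as indicated above.

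Note that this mirrors the mechanism of Theorem \ref{th:transfert}, but with $f=0$, so that neither post-regeneration nor its weak form is needed; the three defining relations of a silvern transmutation are used precisely once each — pre-catalysis to send $u$ to a solution $v$ of the $B$-equation, post-catalysis to send the global $B$-solution $\overline v$ back to a global $A$-solution, and pre-regeneration to recover $u$ from $v$ with a controlled loss of derivatives. The only delicate bookkeeping is the shift $k\mapsto k+m$ incurred by applying $P$ and the verification that $\overline u$ genuinely lives on the domain required by Definition \ref{pole-less}; both are routine consequences of the smoothness of the coefficients of the stone, and crucially no poles are created in the passage, since $P$, $Q$, $R$, $S$ are (local) differential operators, so the pole-less hypothesis transfers cleanly. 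I therefore expect no substantive obstacle beyond matching the quantifiers in the two definitions and this derivative-order accounting.
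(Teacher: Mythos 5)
Your proof is correct and follows exactly the same strategy the paper sketches: transfer the pole-less Runge property from $B$ to $A$ for the given pair $(\Omega,\tilde\Omega)$ by running the mechanism of Theorem \ref{th:transfert} with $f=0$ (pre-catalysis to pass from $u$ to $v=Qu$, post-catalysis to bring the global $B$-solution $\overline v$ back to a global $A$-solution $\overline u = P\overline v$, and pre-regeneration to identify $u=Pv$ on $\Omega$), then invoke the detection property of $A$. The paper's proof is terser — it simply refers to "following the proof of Theorem \ref{th:transfert}" — but you have filled in precisely the steps it delegates, with correct bookkeeping of the $k\mapsto k+m$ derivative shift.
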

 \begin{proof}
Consider  a pair of open subsets $\Omega$ and  $\tilde \Omega$ of $\R^d$ with 
 $\Omega \subset \tilde  \Omega$, for which $B$ satisfies the pole-less Runge property. 
 By assumption, $A$ is a silvern differential transmutation of $B$. 
 Following the proof of Theorem \ref{th:transfert}, we obtain that 
 $A$ also satisfies the pole-less Runge property for $\Omega$ and  $\tilde \Omega$. 
By assumption, $A$ has  the detection property of simply connected complements, so  $\tilde  \Omega \setminus  \Omega$ is simply connected. 
This establishes that  $B$  also has  the detection property of simply connected complements.
\end{proof}

The following table recapitulates which type of transmutation allows to transfer which properties.  
$$ 
\begin{tabular}{| c | c |}
 \hline	    \text{Post-transmutation} & Local solvability  \\ 
    & Differential right inverse  \\
     \hline 		
  \text{Pre-transmutation} & Hypoellipticity \\
  & Differential left inverse  \\
  & Weak unique continuation property \\
  & Unique continuation property across a hypersurface \\
  & Strong unique continuation property if $A$ is elliptic  \\  \hline
  \text{Silvern} &  Runge property with point controls \\
  & Polynomial Runge property    \\  \hline
   \text{Backward silvern} & Detection property of simply connected complements     \\  \hline 
 \end{tabular}
$$
\medskip

\medskip

%%%%%%
\section{The $2$D Lamé-Navier operator}
\label{sec-2D}

This section is devoted to the example of the  $2$D Lamé-Navier operator. 
Thus we consider the case where  $d=2$. Let $\nu \in \R$   
 and consider the case where the operator $A$ is the differential operator which maps $(u,p)$ 
  in  $C^\infty  (\R^2 ; \R^2 \times \R)$ as follows: 
\begin{equation} \label{Eq:LM-Op}
\mathfrak L_\nu : (u,p) \mapsto ( \Delta u - \nabla p ,  \div u - \nu p)  . 
\end{equation}  
One readily sees that, for any  $\nu \in \R$, the operator $\mathfrak L_\nu$ is formally self-adjoint, 
 and for any $(u,p)$ 
  in  $C^\infty_c  (\R^2 ; \R^2 \times \R)$, 
\begin{equation} \label{Eq:q}
- \int \mathfrak L_\nu (u,p) \cdot (u,p) =  \int \Big(  \nabla u : \nabla u  -   2 p \div u    + \nu p^2  \Big),
\end{equation}  
where the integrals are over $\R^2$. 
In the case where $\nu=0$, the operator $\mathfrak L_\nu$ above corresponds to the 
  steady Stokes system, which  aims  at describing steady, incompressible fluids with zero-Reynolds number. %
In particular $u$ stands for  the fluid velocity, which is vector-valued, and  $p$ for the fluid pressure which is scalar-valued.  We refer to \cite{Galdi}  for more on this system.
The other values of $\nu$ are also of interest in linear elasticity, since then the operator $\mathfrak L_\nu$ above corresponds to the
 Lam\'e-Navier system, which aims  at describing the displacement $u$ in an elastic material, see  \cite{Gurtin}  for more.
 The latest usually reads 
 \begin{equation}
  \label{LN-eq}
(\lambda + \mu) \nabla \div u + \mu \Delta u = 0  ,
\end{equation}
  where $\lambda$ and $\mu$ are the Lam\'e constants; so that it is sufficient to set 
 $$\nu := - \frac{\mu  }{\mu+\lambda }  \,  \text{ and } \,
 p := - \frac{\mu +\lambda  }{\mu} \div u ,$$ 
 to make appear the operator $\mathfrak L_\nu$  given by  \eqref{Eq:LM-Op}. 
 However an important difference, compared to the  steady Stokes system, is that, for $\nu \neq 0$, for a solution $(u,p) $ of $\mathfrak L_\nu (u,p) =0$, one has a local expression of $p$ in terms of $u$.
 
 %
%%%%%%
\subsection{The $2$D Lamé-Navier operator as a two-sided golden differential transmutation of $\Delta^{\otimes 2}$}
 
In this section we consider  the case where the operator $B$ is 
\begin{equation} \label{B2D}
B= \Delta^{\otimes 2},
\end{equation}  
 where, for any $\ell \in \N^*$ we  define the Laplace operator $\Delta^{\otimes \ell} $  
 as the operator   which maps  functions in
 $C^\infty (\R^d ; \R^\ell )$ to functions in  $C^\infty (\R^d ; \R^\ell)$  by the following formula:
\begin{equation} \label{Eq:tensor}
\Delta^{\otimes \ell}: v:=(v_i )_{1 \leq i \leq \ell}  \mapsto ( \Delta v_i)_{1 \leq i \leq \ell} .
\end{equation}
Let us highlight that we freely use the simpler notation $\Delta$ for $\Delta^{\otimes 2}$ in vector identities when there is no risk of confusion, as we actually already did in 
 \eqref{Eq:LM-Op}.

For any $\nu \in \R \setminus \{2^{-1}  ,1\}$, we define  $\lambda_\nu$ by 
\begin{equation}
  \label{cho}
\lambda_\nu^{-1} :=  {4(1-2 \nu)(1-\nu)}  .
\end{equation}
We define the following linear differential operators acting on functions $v$ in  $C^\infty  (\R^2 ; \R^2)$  by  
\begin{align}
\label{defUetP2D} 
  P v &:=  \Big((1-2 \nu) v -\frac12 x \,  \div v + \frac12 x^\perp \,  \rot v   , -2\div v \Big) , 
\\  \label{defR2D}  
 R v &:=  \Big((2-2 \nu) v  -\frac12 x \,  \div v + \frac12 x^\perp \,  \rot v ,   -\frac12  x \cdot  v  \Big),
 \\ \label{choT}  \tilde  T v &:=   \frac{1}{2}  \lambda_\nu   | x |^2  v,
 \\  \check Tv &:=    \frac{1}{2} (x \cdot v) x  .
\end{align}
Above we omit to write that the functions are evaluated in $x$ for sake of clarity and we denote by $\rot $ the 2D rotational, defined by 
$\rot v = \partial_1 v_2 - \partial_2 v_1$, for a vector field  $v=(v_1,v_2)$. 
Also we denote by $x^\perp = (-x_2 ,x_1)$ the rotation by $+\pi /2$ of the vector $x=(x_1,x_2)$. 
We also set, for functions $(u,p)$ 
  in  $C^\infty  (\R^2 ; \R^2 \times \R)$, 
\begin{align*}
T (u,p) &:=  4\lambda_\nu \Big( \frac12  p x + \frac18 u  | x |^2  , (3-2\nu) p +  \frac12 x  \cdot u \Big) ,
 \\  \hat   Q (u,p) &:=   x \div u - \nu p x  ,
 \\  \hat   S(u,p) &:=   x\Delta p + 2 \nabla p ,
\\  \hat   T(u,p) &:=  \Big( \frac12 | x |^2 \nabla p + 2 \nu p x  , 4 p +  2 x \cdot \nabla p \Big)  .
\end{align*}
Let us observe that $T$ and $\tilde T$ are formally self-adjoint.
\begin{theorem}\label{th:main2D}
For any $\nu \in \R \setminus \{2^{-1} ,1\}$, 
for any $\lambda $ in $\R$, 
$\mathfrak L_\nu$ is a  two-sided golden differential transmutation of 
$ \Delta^{\otimes 2}$ by philosopher's stone $(P,2 \lambda_\nu   R^* +\lambda \hat Q,R ,2 \lambda_\nu   P^* +\lambda \hat S,T+\lambda \hat T,\tilde T+\lambda  \check T)$.
\end{theorem}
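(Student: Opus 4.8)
The strategy is to verify the six defining identities of a two-sided golden differential transmutation from Definition \ref{DiffEqui} directly, namely
\[
\mathfrak L_\nu P = R\, \Delta^{\otimes 2}, \quad \Delta^{\otimes 2} Q = S\, \mathfrak L_\nu, \quad PQ + T\mathfrak L_\nu = \Id, \quad \Delta^{\otimes 2} \check{} \ \text{-type identities},
\]
where $Q := 2\lambda_\nu R^* + \lambda\hat Q$, $S := 2\lambda_\nu P^* + \lambda\hat S$, $T$ is the given stone plus $\lambda\hat T$, and $\tilde T$ is the given stone plus $\lambda\check T$. I would organize the computation by first treating the case $\lambda = 0$, where the stone is $(P, 2\lambda_\nu R^*, R, 2\lambda_\nu P^*, T, \tilde T)$, and then showing that the $\lambda$-dependent perturbations $(\lambda\hat Q, \lambda\hat S, \lambda\hat T, \lambda\check T)$ are exactly a gauge shift in the sense of Proposition \ref{prop-gauge} and Proposition \ref{prop-gauge-2-sided}: one checks the five linear gauge equations $\Delta^{\otimes 2}\hat Q = \hat S \mathfrak L_\nu$, $P\hat Q = -\hat T\mathfrak L_\nu$, $R\hat S = -\mathfrak L_\nu\hat T$, $\hat Q P = -\check T \Delta^{\otimes 2}$, $\hat S R = -\Delta^{\otimes 2}\check T$. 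This reduces the whole theorem to: (i) the $\lambda = 0$ case, and (ii) the gauge identities, each of which is a finite list of polynomial-coefficient differential operator identities in two variables that can be expanded and matched term by term.

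For step (i), the key structural observation is the formal self-adjointness of $\mathfrak L_\nu$, $\Delta^{\otimes 2}$, $T$ and $\tilde T$ (noted just before the statement). Once the post-catalysis $\mathfrak L_\nu P = R\,\Delta^{\otimes 2}$ and the two pre-regenerations $PQ + T\mathfrak L_\nu = \Id$ and $QP + \tilde T\,\Delta^{\otimes 2} = \Id$ are established with $Q = 2\lambda_\nu R^*$, $S = 2\lambda_\nu P^*$, Proposition \ref{prop-t} should yield the remaining identities (pre-catalysis $\Delta^{\otimes 2}Q = S\mathfrak L_\nu$ and post-regeneration $RS + \mathfrak L_\nu T = \Id$, plus the backward post-regeneration) essentially for free, with $\lambda_\nu$ playing the role of the constant in that proposition (here $S = P^\ast \cdot 2\lambda_\nu$, $R = Q^\ast \cdot (2\lambda_\nu)^{-1}$ in the relevant pairing, after adjusting normalization). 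So the real content of (i) is: (a) compute $\mathfrak L_\nu P$ and $R\Delta$ on a test field $v \in C^\infty(\R^2;\R^2)$ and see they agree — this uses the Helmholtz-type decomposition embodied in the operators $x\,\div v$ and $x^\perp\rot v$, together with the vector identity $\Delta(x\cdot v) = x\cdot\Delta v + 2\div v$ and its rotational analogue; (b) verify the B\'ezout identity $PQ + T\mathfrak L_\nu = \Id$, which is where the specific normalization $\lambda_\nu^{-1} = 4(1-2\nu)(1-\nu)$ is forced, and likewise $QP + \tilde T\Delta^{\otimes 2} = \Id$ with $\tilde T v = \tfrac12\lambda_\nu|x|^2 v$, using $\Delta(|x|^2 v) = |x|^2\Delta v + 4x\cdot\nabla v + 4v$ type identities.

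The main obstacle I expect is the bookkeeping in step (i)(b): the operators $P$, $R$, $Q = 2\lambda_\nu R^*$, $T$, $\tilde T$ all carry polynomial coefficients of degree up to $2$ and first-order (for $P,R$) or second-order (inside $R^*$, $\mathfrak L_\nu$) differential parts, so the compositions $PQ$ and $T\mathfrak L_\nu$ are differential operators of order up to $4$ with coefficients that are polynomials of degree up to $2$; checking that their sum collapses to $\Id$ requires carefully tracking the $\rot$/$\div$ cross terms and the terms generated by commuting multiplication-by-$x$ past $\partial$. It is plausible the authors reduce this by decomposing $v = \nabla\varphi + \nabla^\perp\psi$ (so that $\mathfrak L_\nu$ acts more transparently — the pressure couples only to $\div u = \Delta\varphi$), but even without that the verification is a determined-but-finite symbolic computation; I would set it up in coordinates $(x_1,x_2)$, expand all compositions, and collate. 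A secondary subtlety is that $R^\ast$, the formal adjoint of $R$ (an operator $C^\infty(\R^2;\R^2)\to C^\infty(\R^2;\R^2\times\R)$), must be computed explicitly via Definition \ref{formally-adjoint} before one can even write down $Q = 2\lambda_\nu R^\ast$; getting the signs and the $\partial^\alpha C_\alpha(x)$ ordering right there is where arithmetic slips are most likely. Everything else — the $\lambda$-gauge part (ii) and the self-adjointness bookkeeping — is routine once (i) is in hand.
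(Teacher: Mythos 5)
Your plan is essentially the paper's proof: reduce via Proposition~\ref{prop-gauge-2-sided} (the $\lambda$-gauge shift) and Proposition~\ref{prop-t} (with $\lambda = (2\lambda_\nu)^{-1}$, using the formal self-adjointness of $\mathfrak L_\nu$, $\Delta^{\otimes 2}$, $T$, $\tilde T$ and the relations $Q = 2\lambda_\nu R^*$, $S = 2\lambda_\nu P^*$) to the three identities $\mathfrak L_\nu P = R\Delta$, $2\lambda_\nu PR^* + T\mathfrak L_\nu = \Id$, $2\lambda_\nu R^*P + \tilde T\Delta = \Id$ together with the five gauge identities, all verified by direct computation. This matches the paper's argument exactly; the paper simply omits the symbolic details you describe.
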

Observe that the forbidden value $\nu=1$ corresponds to the critical value for which strong ellipticity of $\mathfrak L_\nu$ is lost, see \eqref{Eq:q}.
 Let us refer to  Section \ref{transE} for another insight on the two forbidden values $\nu \in  \{2^{-1} ,1\}$.

\begin{proof}
According to   Proposition  \ref{prop-gauge-2-sided} and  Proposition \ref{prop-t}, with  $\lambda = (2  \lambda_\nu )^{-1}$, 
 it is sufficient to prove that  
 $\mathfrak L_\nu P=R\Delta$, $ 2 \lambda_\nu   PR^* +T\mathfrak L_\nu =\Id$, $2 \lambda_\nu   R^* P+\tilde T \Delta = \Id$, 
$\Delta\hat  Q  = \hat  S \mathfrak L_\nu$, 
$ P \hat Q = - \hat T \mathfrak L_\nu$, $\hat Q P = - \check T \Delta$,
 $R  \hat  S = - A  \hat  T$ 
 and 
 $  \hat  S R = - \Delta  \check  T$. 
 These identities can be checked by direct computations.  \end{proof}

%%%%%%
\subsection{Consequences for local properties}
\label{cons2D}
  As a direct consequence of 
  Theorem \ref{th:transfert-HYPO},
Theorem \ref{th:transfert-WUC}, 
Theorem \ref{th:transfert-SUC}, 
 Theorem \ref{th:transfert}, Theorem \ref{11}, Theorem \ref{th:transfert-11}, Theorem \ref{th:main2D}, Theorem \ref{th:approxharm1},  Theorem \ref{th-lap-poly} and Theorem \ref{th:transfert-poly}
 we have the following result.
\begin{corollary}\label{th:rw}
The operator $ \mathfrak L_\nu$, defined in  \eqref{Eq:LM-Op} with $\nu \in \R \setminus \{\frac12 ,1 \}$, satisfies the weak unique continuation property, 
  the Runge property with point controls, the polynomial Runge property  on  $\R^2$, and the detection property of simply connected complements. 
  Moreover, if $\nu>1$, then  the  operator $ \mathfrak L_\nu$ satisfies the strong unique continuation property. 
\end{corollary}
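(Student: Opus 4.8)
The plan is to deduce Corollary~\ref{th:rw} by feeding Theorem~\ref{th:main2D} into the transfer theorems of Section~\ref{sec-dt}, with $B = \Delta^{\otimes 2}$ playing the role of the source operator and $A = \mathfrak L_\nu$ the target. First I would record that, by Theorem~\ref{th:main2D}, for every admissible $\nu$ the operator $\mathfrak L_\nu$ is a two-sided golden differential transmutation of $\Delta^{\otimes 2}$; in particular it is simultaneously a silvern and a bronze differential transmutation, and hence a fortiori a differential pre-transmutation and a differential post-transmutation of $\Delta^{\otimes 2}$. This single observation is what unlocks every item in the list, since each transfer theorem only needs one of these weaker notions.

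Next I would check that $\Delta^{\otimes 2}$ inherits from $\Delta$ all the properties we wish to transfer. For the weak unique continuation property, the strong unique continuation property, the Runge property with point controls, the polynomial Runge property and the detection property of simply connected complements, one uses Proposition~\ref{prop-elem7} (or simply the componentwise structure \eqref{Eq:tensor}): $\Delta^{\otimes 2}$ is the tensorization $\Delta^{\otimes 2}$ of $\Delta$, and since $\ker\Delta\neq 0$ this is in fact a golden differential transmutation, so Theorem~\ref{11}, Theorem~\ref{th:approxharm1}, Theorem~\ref{th-lap-poly} and the unique continuation properties of $\Delta$ all pass to $\Delta^{\otimes 2}$ via the componentwise transfer theorems. (Alternatively, these properties for a direct sum of copies of $\Delta$ are immediate by applying the scalar result to each component.) With that in hand, the weak unique continuation property of $\mathfrak L_\nu$ follows from Theorem~\ref{th:transfert-WUC}; the Runge property with point controls from Theorem~\ref{th:transfert}; the polynomial Runge property from Theorem~\ref{th:transfert-poly}, using that the first stone $P$ in \eqref{defUetP2D} has polynomial (indeed affine) coefficients; and the detection property of simply connected complements from Theorem~\ref{th:transfert-11}, which transfers this property \emph{backward} along silvern transmutations — here from $\mathfrak L_\nu$ to $\Delta^{\otimes 2}$ — so one must instead run it the other way, observing that it is $\Delta^{\otimes 2}$ that is a silvern transmutation of $\mathfrak L_\nu$ (which holds because two-sided golden transmutations are symmetric by Theorem~\ref{th:classes-2gold}, or one verifies directly that swapping $P\leftrightarrow Q$, $R\leftrightarrow S$ gives a silvern transmutation of $\mathfrak L_\nu$ by $\Delta^{\otimes 2}$).

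For the last sentence of the corollary, the extra hypothesis $\nu>1$ is needed because Theorem~\ref{th:transfert-SUC} requires the \emph{target} operator $A=\mathfrak L_\nu$ to be elliptic. From the quadratic form identity \eqref{Eq:q}, $\mathfrak L_\nu$ is strongly elliptic precisely when the form $\nabla u:\nabla u - 2p\,\div u + \nu p^2$ is coercive, which after completing the square in $p$ forces $\nu - 1 > 0$; so for $\nu > 1$ the operator $\mathfrak L_\nu$ is elliptic, and since $\Delta^{\otimes 2}$ has the strong unique continuation property, Theorem~\ref{th:transfert-SUC} applied to the differential pre-transmutation $\mathfrak L_\nu$ of $\Delta^{\otimes 2}$ yields the strong unique continuation property for $\mathfrak L_\nu$.

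The main obstacle, such as it is, is bookkeeping rather than substance: one must be careful about the \emph{direction} of each transfer theorem (pre- versus post-transmutation, forward versus backward), and must confirm that the mild algebraic facts about $\Delta^{\otimes 2}$ — its pre-transmutation status relative to $\Delta$, and the ellipticity of $\mathfrak L_\nu$ when $\nu>1$ — are genuinely in place. All the genuinely hard analytic input (unique continuation and Runge approximation for the Laplacian) has already been quoted as Theorems~\ref{th:approxharm1}, \ref{11}, \ref{th-lap-poly}, so no new PDE estimate is required.
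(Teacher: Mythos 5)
Your plan follows exactly the paper's route (the paper's ``proof'' is just a list of the transfer theorems, Theorem~\ref{th:main2D}, and the Laplace-operator inputs), and you correctly handle the one directional subtlety — that Theorem~\ref{th:transfert-11} runs backward, so one must first pass from $\mathfrak L_\nu$ to $\Delta^{\otimes 2}$ via the symmetry of two-sided golden transmutations (Theorem~\ref{th:classes-2gold}).

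Two points in your write-up are slightly off, though neither is fatal. First, your appeal to Proposition~\ref{prop-elem7} is misdirected: that proposition makes $\Delta^{\otimes 1}$ a silvern transmutation of $\Delta^{\otimes 2}$ (the case $p=1\leq q=2$), so the transfer theorems would carry properties \emph{from} $\Delta^{\otimes 2}$ \emph{to} $\Delta$, which is the wrong direction here; moreover the proposition explicitly states that when $\ker B\neq 0$ and $p<q$ the transmutation is \emph{not} golden, the opposite of what you assert. Fortunately the parenthetical alternative you offer — arguing componentwise from \eqref{Eq:tensor} that $\Delta^{\otimes 2}$ inherits WUC, SUC, the Runge properties and the detection property directly from $\Delta$ — is the correct and standard step, so the proof is not impaired; just drop the appeal to Proposition~\ref{prop-elem7}. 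Second, the ellipticity argument for $\nu>1$ is imprecise: completing the square in $p$ pointwise in \eqref{Eq:q} leaves the residual $|\nabla u|^2 - \nu^{-1}(\div u)^2$, which is pointwise nonnegative only for $\nu\geq d=2$. The threshold $\nu>1$ requires either the integral identity $\int(\div u)^2\leq\int|\nabla u|^2$ valid for compactly supported $u$ (from $\int|\nabla u|^2=\int(\div u)^2+\int(\rot u)^2$), or a symbol-level computation showing that $-|\xi|^2\,\Id+\nu^{-1}\,\xi\otimes\xi$ is negative definite precisely when $\nu\notin[0,1]$. Either way, you reach the paper's conclusion; you should just state the missing integral or symbol step rather than claim the pointwise completion of the square alone ``forces $\nu-1>0$.''
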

We postpone comments until Section 
\ref{cons3D} where similar results are obtained in the 3D case, with a comparison to some earlier results which hold in any dimension.

%%%%%%
\subsection{Generation of  fundamental solutions}

As another corollary of Theorem \ref{th:main2D}, 
we  have the following  result regarding the 
generation of  fundamental solutions of  the  Lamé-Navier   and Stokes systems by the means of 
fundamental solutions of  the Laplace problem
(compare to Example \ref{excar}).
\begin{corollary} \label{th-fonda2D}
Let $b$ in $\R^2$.  
Let $\Omega$  a domain in $\R^2$ with $0 \in \Omega$.
 \begin{enumerate}[leftmargin=19pt]
\item   Let  $v:\Omega \rightarrow \R^2$  satisfies 
$\Delta v   = b \delta_0$ and  set $ (u,p) :=  Pv$.
Then,  for any $\nu \in \R $, 
 \begin{equation} \label{2b2D}
\mathfrak L_\nu(u,p)= ((2-2\nu) b \delta_0 ,0), \quad  \text{ in }  \quad  \Omega. 
\end{equation}
\item Let $\nu \in \R \setminus \{\frac12 ,1 \}$. 
Let $u:\Omega \rightarrow \R^2$ and $p:\Omega \rightarrow \R$
satisfying    \eqref{2b2D}. 
Set $v := Q(u,p)$, 
Then $\Delta v   = b \delta_0$ and  $ (u,p) :=  P v$.   
\end{enumerate}
\end{corollary}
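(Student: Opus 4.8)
The plan is to deduce Corollary \ref{th-fonda2D} directly from the two-sided golden differential transmutation established in Theorem \ref{th:main2D}, together with the density argument already invoked after Definition \ref{distri}, which allows all the catalysis and regeneration identities to be applied to distributions rather than only to $C^\infty$ functions. Throughout I will use the specific stone from Theorem \ref{th:main2D} with the gauge parameter $\lambda$ set to $0$ (any other choice works equally well, but $\lambda=0$ keeps the computations cleanest and makes the operators $P$, $Q$, $R$, $S$ the explicit ones of \eqref{defUetP2D}--\eqref{defR2D}); so in particular $\mathfrak L_\nu P = R \Delta^{\otimes 2}$ as an identity of differential operators, hence also as an identity applied to any distribution supported near $0$.

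For part (1), I would start from $\Delta v = b\delta_0$ in $\Omega$ and simply apply the post-catalysis identity: $\mathfrak L_\nu(u,p) = \mathfrak L_\nu P v = R \Delta v = R(b\delta_0)$. It then remains to compute $R(b\delta_0)$ from the formula \eqref{defR2D} for $R$. Since $b\delta_0$ is a constant-vector Dirac mass, the terms of $R$ involving $\div v$, $\rot v$ and $x\cdot v$ must be handled as distributions: $x \,\div(b\delta_0)$ and $x^\perp\,\rot(b\delta_0)$ each contribute a multiple of $b\delta_0$ after using $x\,\partial_i\delta_0 = -\delta_{i\,\cdot}\delta_0$ (the identity $x f'(x)=-f$ for $\delta_0$ in each variable), while $x\cdot(b\delta_0)=0$ as a distribution. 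Collecting the coefficients should yield exactly $((2-2\nu)b\delta_0,0)$, which is \eqref{2b2D}; this is the one genuinely computational point and I expect it to be a short, self-contained verification of the distributional action of the first-order terms of $R$ on $b\delta_0$. Note this part holds for every $\nu\in\R$ because it uses only post-catalysis, which requires no invertibility.

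For part (2), with $\nu\notin\{\tfrac12,1\}$, I would use the backward direction of the transmutation. Set $v:=Q(u,p)$. Pre-catalysis gives $\Delta^{\otimes 2} v = \Delta Q(u,p) = S\,\mathfrak L_\nu(u,p) = S\big((2-2\nu)b\delta_0,0\big)$, and one computes from the formula for $S=2\lambda_\nu P^*$ that this equals $b\delta_0$ — here the choice \eqref{cho} of $\lambda_\nu$ and the factor $(2-2\nu)$ conspire precisely so that the normalisation is $1$; this is exactly the sort of constant-matching the forbidden values $\nu=\tfrac12,1$ would break. Then, to recover $(u,p)=Pv$, I would invoke the pre-regeneration identity $PQ + T\mathfrak L_\nu = \Id$ applied to the distribution $(u,p)$: since $\mathfrak L_\nu(u,p)=((2-2\nu)b\delta_0,0)$, we get $(u,p) = PQ(u,p) + T\big((2-2\nu)b\delta_0,0\big)$, so $(u,p)=Pv$ provided $T\big((2-2\nu)b\delta_0,0\big)=0$. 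From the formula for $T$, every term contains either $px$, $u|x|^2$, $p$, or $x\cdot u$ evaluated on $(u,p)=((2-2\nu)b\delta_0,0)$ — but $|x|^2\delta_0=0$ and $x\cdot(b\delta_0)=0$ as distributions, and the $p$-component is $0$ — so $T$ annihilates this source and the desired identity $(u,p)=Pv$ follows.

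The main obstacle, and the only place real care is needed, is the distributional bookkeeping: the operators $P,Q,R,S,T$ have polynomial coefficients in $x$ and differential order up to two, so when they act on $\delta_0$ (or on $(u,p)$ whose image under $\mathfrak L_\nu$ is a Dirac mass) one must repeatedly use identities like $x_i\partial_j\delta_0=-\delta_{ij}\delta_0$, $|x|^2\delta_0=0$, $x_i x_j\,\partial^2\delta_0 = $ (a combination of $\delta_0$'s), and the fact that all of this is taking place only in a neighbourhood of $0$, where $v$, $u$, $p$ need not be globally defined or decaying. The algebraic identities of Theorem \ref{th:main2D} are exact operator identities, so once one trusts the density/extension-to-distributions remark, both parts are formal consequences plus these finite-dimensional Dirac computations; I would present part (1) in full and then note that part (2) is obtained by running pre-catalysis and pre-regeneration in place of post-catalysis, together with the vanishing of $T$ and $S$'s lower-order contributions on the relevant Dirac source.
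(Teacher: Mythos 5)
Your proof is correct and follows essentially the same route as the paper: part (1) by post-catalysis plus the distributional identities $x\,\div(b\delta_0)=-b\delta_0$, $x^\perp\rot(b\delta_0)=-b\delta_0$, $x\cdot(b\delta_0)=0$ to evaluate $R(b\delta_0)$; part (2) by pre-catalysis to get $\Delta v=b\delta_0$ and pre-regeneration together with $T((2-2\nu)b\delta_0,0)=0$ to get $(u,p)=Pv$. The paper's proof is just a terser version of the same argument.
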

\begin{proof} We start with the proof  of $(1)$. 
By elementary computations,  $x^\perp  \rot (b  \delta_0) =  -  b \delta_0 $ and $x \, \div (b  \delta_0)  = -  b \delta_0$, 
so that $R b  \delta_0  = ((2-2\nu)b  \delta_0 ,0)$.
Then, by post-catalysis,   $\mathfrak L_\nu(u,p)= R( b  \delta_0 , 0)  = ((2-2\nu)b  \delta_0 ,0)$.
To prove $(2)$, we use   pre-catalysis to get 
$\Delta v =S  \mathfrak L_\nu (u,p) = S((2-2\nu)b  \delta_0 ,0)= b \delta_0$.
Moreover
 $T((2-2\nu)b  \delta_0 ,0 ) = 0$ so that $(u,p) = Pv$ by pre-regeneration.
\end{proof}
In the  peculiar case of Corollary  \ref{th-fonda2D}
where $b=0$, we recover some earlier results on the representation,  in an arbitrary domain, of solutions to the $2$D steady Stokes system  and to the $2$D  Lamé-Navier  system 
respectively obtained  by Kratz in \cite{Kratz}  (in the case where $\nu =0$) and 
 by Zsupp\'an in \cite{Zsuppan} (in the case where  $\nu \in \R \setminus \{\frac12 ,1\}$). 
 These results are interesting variations on the theme of the so-called Neuber-Papkovich potentials, see 
\cite{Neuber} and \cite{Papkovich}, and \cite{Gurtin} for a detailed exposition. 
 \medskip

   %%%%%%
\subsection{Transfer of energy}
\label{transE}

Next result concerns the energy-type quantity  associated to $\mathfrak L_\nu$ which is defined in  \eqref{Eq:q}, for vector fields $(u,p) $ in the image of the operator $P$ (compare to \eqref{Eq:tocomp}). 
 \begin{proposition} \label{energyt}
 When the operator $\mathfrak L_\nu$ is given by  \eqref{Eq:LM-Op} with $\nu \in \R$ and $P$ is given by  
 \eqref{defUetP2D}, then 
 for any  $v$ in $C^\infty_c (\R^2 ; \R^2)$, 
\begin{equation} \label{Eq:qqq}
- \int \mathfrak L_\nu (u,p) \cdot (u,p) = 
\frac{ 1 }{2 \lambda_\nu  } \int | \nabla v |^2 + \frac14 \int    | x |^2 | \Delta v |^2 ,
\end{equation}  
where $(u,p) = Pv$. 
\end{proposition}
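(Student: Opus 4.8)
The plan is to derive \eqref{Eq:qqq} by combining the transmutation identities established in Theorem \ref{th:main2D} with the self-adjointness properties collected just before that theorem. First I would use the pre-regeneration identity from the two-sided golden transmutation, namely $PQ + T\mathfrak L_\nu = \Id$ (with $Q$ the middle stone $2\lambda_\nu R^* + \lambda\hat Q$ and $\lambda$ chosen as in the proof of Theorem \ref{th:main2D}, or more robustly just $Q = 2\lambda_\nu R^*$ by the gauge freedom of Proposition \ref{prop-gauge-2-sided}), applied to $(u,p) = Pv$. Since $\mathfrak L_\nu P = R\Delta$ by post-catalysis, we get $\mathfrak L_\nu (u,p) = R\Delta v$, and pairing against $(u,p) = Pv$ in $L^2(\R^2)$ gives
\[
-\int \mathfrak L_\nu(u,p)\cdot(u,p) = -\int R\Delta v \cdot Pv = -\int \Delta v \cdot R^* P v,
\]
using \eqref{sym} and that all operators have smooth coefficients while $v$ is compactly supported.

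The next step is to identify $R^* P$ explicitly. By the backward pre-regeneration identity $QP + \tilde T\Delta = \Id$ (again with $Q = 2\lambda_\nu R^*$ after the gauge adjustment, and $\tilde T$ as in \eqref{choT}), we obtain $2\lambda_\nu R^* P = \Id - \tilde T\Delta = \Id - \tfrac12 \lambda_\nu |x|^2 \Delta$. Hence
\[
R^* P v = \tfrac{1}{2\lambda_\nu} v - \tfrac14 |x|^2 \Delta v,
\]
and substituting this into the previous display yields
\[
-\int \mathfrak L_\nu(u,p)\cdot(u,p) = -\tfrac{1}{2\lambda_\nu}\int \Delta v\cdot v + \tfrac14 \int |x|^2 |\Delta v|^2.
\]
An integration by parts on the first term, $-\int \Delta v \cdot v = \int |\nabla v|^2$ (valid since $v \in C^\infty_c$), produces exactly \eqref{Eq:qqq}.

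The one genuine obstacle is bookkeeping of which gauge representative of the middle stone to use: Theorem \ref{th:main2D} states the transmutation with $Q = 2\lambda_\nu R^* + \lambda \hat Q$, whereas the clean algebra above wants $Q = 2\lambda_\nu R^*$. I would resolve this either by invoking Proposition \ref{prop-gauge-2-sided} to pass to the $\lambda = 0$ representative (checking that the gauge equations \eqref{gequagold} are met by $(\hat Q, \hat S, \hat T, \check T)$, which is part of the content of Theorem \ref{th:main2D}'s proof), or—more directly—by verifying that $\hat Q^* P$ and the other $\lambda$-dependent contributions cancel in the pairing. In fact, since the left-hand side of \eqref{Eq:qqq} does not depend on $\lambda$, it suffices to do the computation for the single convenient value $\lambda = 0$; alternatively one checks directly from \eqref{sym} that $\int \hat Q(u,p)\cdot \text{(appropriate factor)}$ integrates to zero against $\Delta v$, using that $\hat Q$ applied to $\mathfrak L_\nu(u,p) = R\Delta v$ contributes a term killed by the structure. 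Beyond this point everything reduces to the two integrations by parts displayed above, which are routine.
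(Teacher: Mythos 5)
Your proof is correct and follows essentially the same route as the paper's: both pivot on the post-catalysis identity $\mathfrak L_\nu P = R\Delta$ and the backward pre-regeneration identity $2\lambda_\nu R^*P + \tilde T\Delta = \Id$, differing only in whether one transposes to $2\lambda_\nu P^*R + \Delta\tilde T = \Id$ (as the paper does) or leaves $R^*P$ acting on the right factor of the pairing (as you do). Your concluding paragraph about the gauge choice is unnecessary, since Theorem \ref{th:main2D} is stated ``for any $\lambda\in\R$,'' so taking $\lambda=0$ hands you the clean stone $(P,\,2\lambda_\nu R^*,\,R,\,2\lambda_\nu P^*,\,T,\,\tilde T)$ directly.
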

One observes that, while the second term of the right hand side of \eqref{Eq:qqq} is nonnegative, 
the sign of the  first term
 changes at the two critical values 
$\nu = 1/2$ and  $\nu =1$, and is positive in the case where $\nu=0$.
For $\nu \in \R \setminus [\frac12 ,1 ]$, we deduce from Proposition \ref{energyt} the following Liouville-type result: 
if  $v$ in $C^\infty_c (\R^2 ; \R^2)$ satisfies $\mathfrak L_\nu Pv=R\Delta v=0$ then $v=0$. 
\begin{proof}
For any  $v$ in $C^\infty (\R^2 ; \R^2)$, 
the quantity in \eqref{Eq:q} with $(u,p) = Pv$  is then 
\begin{equation*} 
- \int \mathfrak L_\nu (u,p) \cdot (u,p) =  - \int  P^* \mathfrak L_\nu P v \cdot v  
= - \int  P^* R \Delta v \cdot v, 
\end{equation*}  
by post-catalysis. 
By transposition of the identity: $2 \lambda_\nu   R^* P+\tilde T \Delta = \Id$, we have 
 $2\lambda_\nu   P^* R  + \Delta  \tilde T  = \Id$, 
 and thus, 
\begin{equation*}
- \int \mathfrak L_\nu (u,p) \cdot (u,p)   
= -\frac{ 1 }{2 \lambda_\nu  }  \int  \Delta v \cdot v  + \frac{ 1 }{2 \lambda_\nu  }   \int  \Delta  \tilde T  \Delta v \cdot v , 
\end{equation*}  
which, by some  integration by parts, and  recalling  
  \eqref{choT}, leads to \eqref{Eq:qqq}. 
 \end{proof}
%%

%%%%%%%%%%%%%%%%%%%%%%%%%%%%%%%%%%%%%%%%%%%%%%%%%%%%%%%%%%%%%%%%%%%%%%%%%%%%%%%%%%%%%%%%%%%%%%%%%%%%%%%%%%%%%%%%%%%%%%%%%%%%%%%%%%%%%%%%%%%%%%%%%%%%%%%%%%%%%%%%%%%%%%%%%%%%%%%
%%%%%%%%%%%%%%%%%%%%%%%%%%%%%%%%%%%%%%%%%%%%%%%%%%%%%%%%%%%%%%%%%%%%%%%%%%%

\section{The $3$D steady Stokes operator}
\label{sec-3D}

In this section we again consider the operator  defined in  \eqref{Eq:LM-Op}  
 but this time set on $3$D domains.

\subsection{The $3$D steady Stokes operator as a golden differential transmutation of $\Delta^{\otimes 4}$}

Recalling the notation  \eqref{Eq:tensor}, we set 
\begin{equation}
  \label{B3D}
B:=\Delta^{\otimes 4}.
\end{equation}
We  define the two linear differential operators with variable coefficients: $P$ and $Q$, defined by their action on $C^\infty$ functions $(v,w)$  with values in $\R^3 \times \R $ by 
\begin{align*} 
% \label{defUetP3D}   
P(v,w)  &:=   \Big( \frac32 v -\frac12 x \div v - \frac12  x\wedge (\curl v - \nabla  w)  , -2\div v \Big)  , 
 \\  % \label{defR3D}  
  R (v,w) &:= \Big( \frac52 v -\frac12   x \div v  - \frac12  x\wedge (\curl v- \nabla  w)    , -\frac12  x \cdot   v  \Big).
    \end{align*}
  For $(u,p)$  in  $C^\infty  (\R^3 ; \R^3 \times \R)$, we define 
  $$  T(u,p) :=   \Big( \frac{1}{3}   px +  \frac{1}{12} |x|^2  u  ,   \frac{7}{3} p +   \frac{1}{3} x \cdot u \Big), $$
  which  is formally self-adjoint, and the gauge operators:
\begin{align*}
\hat   Q (u,p) &:=  ( x\div u   ,0),
 \\  \hat   S(u,p) &:=  ( x\Delta p + 2 \nabla p , 0),
\\  \hat   T(u,p) &:=(  \frac12 | x |^2 \nabla p  , 6 p +  2 x  \cdot \nabla p) .
\end{align*}

Let  $\mathfrak L_0$ be the $3$D steady Stokes operator, that is the operator   defined in  \eqref{Eq:LM-Op}   on $\R^3$ 
 with $\nu =0$. 
\begin{theorem}\label{th:main3D2}
For any $\lambda \in \R$,
 $\mathfrak L_0 $ is a golden  transmutation of  $\Delta^{\otimes 4}$ by philosopher's stone 
  $(P,\frac13 R^* +\lambda \hat Q,R ,\frac13 P^*+\lambda \hat S,T+\lambda \hat T)$
\end{theorem}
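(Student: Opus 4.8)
The plan is to mimic the proof of Theorem \ref{th:main2D}, reducing the two-sided golden transmutation to a short list of operator identities via the abstract gauge and symmetry lemmas. Concretely, I would first invoke Proposition \ref{prop-gauge-2-sided} together with Proposition \ref{prop-t}, choosing the scaling constant $\lambda = 3$ (so that $\frac13 = \lambda^{-1}$ matches the relation $P^* = \lambda S$, $Q^* = \lambda^{-1} R$ with $A^* = A$, $B^* = B$, $T^* = T$), to reduce the statement to verifying that the \emph{ungauged} data $(P,\frac13 R^*,R,\frac13 P^*,T)$ already gives a two-sided golden differential transmutation of $\mathfrak L_0$ by $\Delta^{\otimes 4}$, and that the gauge quadruple $(\hat Q,\hat S,\hat T,\check T)$ with $\check T = 0$ satisfies the five gauge equations in \eqref{gequagold}. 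Note that formal self-adjointness of $\mathfrak L_0$, of $\Delta^{\otimes 4}$, and of $T$ are the hypotheses of Proposition \ref{prop-t}, and the self-adjointness of $T$ is asserted in the statement; I would record these at the outset.

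The core is then the following finite list of identities, each checkable by a direct (if lengthy) computation in which $x$, $\div$, $\curl$, $\nabla$ and $\Delta$ on $\R^3$ are manipulated using standard vector calculus identities (e.g.\ $\div(x\, f) = 3f + x\cdot\nabla f$, $\curl(x\wedge a) = 2a + \ldots$, $\Delta(|x|^2 f) = 2d f + 4 x\cdot\nabla f + |x|^2\Delta f$ with $d=3$, and commutators like $[\Delta, x\cdot] = 2\nabla$): first the post-catalysis $\mathfrak L_0 P = R\,\Delta^{\otimes 4}$; then the two regeneration identities $\frac13 P R^* + T\mathfrak L_0 = \Id$ and $\frac13 R^* P + \tilde T\,\Delta^{\otimes 4} = \Id$ where $\tilde T$ is read off as the ``backward'' companion of $T$ (here one uses that transposing $\frac13 R^* P + \tilde T\Delta = \Id$ gives $\frac13 P^* R + \Delta\tilde T = \Id$, consistent with Proposition \ref{prop-t}); and then the gauge equations: $\Delta^{\otimes 4}\hat Q = \hat S\,\mathfrak L_0$, $P\hat Q = -\hat T\,\mathfrak L_0$, $\hat Q P = -\check T\,\Delta^{\otimes 4} = 0$, $R\hat S = -\mathfrak L_0\hat T$, and $\hat S R = -\Delta^{\otimes 4}\check T = 0$. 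The last two right-hand sides vanish because $\check T = 0$, so those two reduce to $\hat Q P = 0$ and $\hat S R = 0$ as operator identities, which should follow from $\hat Q(u,p) = (x\div u, 0)$ annihilating the image of $P$ (since the first component of $\mathfrak L_0 P(v,w)$ has zero divergence-type structure after the cancellations) and dually for $\hat S R$.

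The structural reason these identities hold, and the way I would organize the verification to avoid blind computation, is the factorization picture: $P$ and $R$ are built so that $\mathfrak L_0 P = R\Delta^{\otimes 4}$ expresses $\mathfrak L_0$ as (roughly) a fourth-order-potential representation — the Neuber--Papkovich/biharmonic-type ansatz for Stokes flow — and the regeneration identities encode that this representation is ``almost invertible'' with the explicit error term $T$. I would therefore verify $\mathfrak L_0 P = R\Delta$ first (this pins down the coefficients $\tfrac32, \tfrac52, -\tfrac12, -2$), then verify $\frac13 R^* P + \tilde T\Delta = \Id$ by applying both sides to a generic $(v,w)$ and using the fourth-order structure (so $\tilde T$ involves $|x|^2$ and $|x|^4$-type terms, analogous to $\tilde T$ in \eqref{choT} but now degree-four), and finally derive $\frac13 PR^* + T\mathfrak L_0 = \Id$ either directly or by the adjoint trick of Proposition \ref{prop-t}.

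The main obstacle I anticipate is purely computational bookkeeping in $3$D: the operators mix $x\wedge(\curl v - \nabla w)$ with $x\div v$ and involve up to fourth powers of $|x|$ against $\Delta^{\otimes 4}$, so the cancellations producing $\Id$ are delicate and the correct fractional coefficients ($\tfrac32$, $\tfrac52$, $\tfrac13$, $\tfrac7{3}$, $\tfrac1{12}$, $\tfrac16$) must be matched exactly; there is also the bookkeeping subtlety that ``$\Delta^{\otimes 4}$'' acts on $\R^3 \times \R$-valued functions (three components for $v$, one for $w$), so one must keep careful track of which $\Delta$ hits which slot. As in the $2$D case, however, each identity is a finite polynomial-coefficient differential-operator identity, so in principle all of them ``can be checked by direct computations,'' and I would simply carry them out slot by slot.
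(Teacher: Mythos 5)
There is a genuine mismatch between what you are trying to prove and what the statement asks for. Theorem \ref{th:main3D2} asserts a \emph{golden} transmutation, not a \emph{two-sided golden} one, and this is not an accidental omission: the $3$D case is precisely the one where the paper drops the backward identities. Your plan to invoke Proposition \ref{prop-t} and Proposition \ref{prop-gauge-2-sided} therefore aims at a strictly stronger claim, and it forces you to produce two objects that the paper never defines in $3$D: a differential operator $\tilde T$ with $\tfrac13 R^* P + \tilde T\,\Delta^{\otimes 4} = \Id$ (backward pre-regeneration) and a gauge companion $\check T$. Nothing in the statement or the surrounding text licenses their existence, and Example \ref{excar} already warns that backward regenerators may fail to be differential.

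The concrete step that breaks is your assertion that $\check T = 0$ works, i.e.\ that $\hat Q P = 0$. Writing $P(v,w) = (u,p)$, a direct computation gives
\begin{equation*}
\div u \;=\; \tfrac32 \div v \;-\;\tfrac12\big(3\div v + x\cdot\nabla\div v\big)\;+\;\tfrac12\, x\cdot(\nabla\div v - \Delta v)\;=\;-\tfrac12\, x\cdot\Delta v ,
\end{equation*}
so $\hat Q P(v,w) = \big(-\tfrac12\, x\,(x\cdot\Delta v),\,0\big)$, which is nonzero. (Compare the $2$D case, where the paper's $\check T v = \tfrac12 (x\cdot v)\,x$ is nonzero for exactly this reason.) So even if a backward $\tilde T$ existed, your gauge bookkeeping would not close.

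The paper's actual route stays on the one-sided golden level and is lighter than what you propose. Only five identities are checked: the post-catalysis $\mathfrak L_0 P = R\Delta$, the pre-regeneration $\tfrac13 PR^* + T\mathfrak L_0 = \Id$, and the three (forward) gauge identities $\Delta\hat Q = \hat S\mathfrak L_0$, $P\hat Q = -\hat T\mathfrak L_0$, $R\hat S = -\mathfrak L_0\hat T$. The pre-catalysis and post-regeneration for the ungauged stone $(P,\tfrac13 R^*,R,\tfrac13 P^*,T)$ are then obtained for free by taking formal adjoints of the first two identities (using $\mathfrak L_0^*=\mathfrak L_0$, $\Delta^*=\Delta$, $T^*=T$): this is Proposition \ref{prop-elem8}, which converts silvern into bronze for the adjoint philosopher's stone, and Proposition \ref{prop-dilatation} then rescales that adjoint stone back to $(P,\tfrac13 R^*,R,\tfrac13 P^*,T)$, so that silvern and bronze hold with the same stone, i.e.\ golden. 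Finally Proposition \ref{prop-gauge}, applied in both its silvern and bronze variants, inserts the $\lambda\hat Q$, $\lambda\hat S$, $\lambda\hat T$ terms. You should replace Propositions \ref{prop-t} and \ref{prop-gauge-2-sided} by Propositions \ref{prop-elem8}, \ref{prop-dilatation} and \ref{prop-gauge} in your argument, drop the backward identities and $\check T$ entirely, and then the remaining direct computations are exactly the five identities listed above.
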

\begin{proof}
By computations, we observe that  $\mathfrak L_0 P = R\Delta$,  
 $3^{-1} PR^*   + T \mathfrak L_0  = \Id$, $\Delta \hat  Q  = \hat  S \mathfrak L_0$, $ P \hat Q = - \hat T \mathfrak L_0$ and 
  $R \hat  S = - A \hat T$.
 Then  it is sufficient to combine
 Proposition \ref{prop-dilatation}, Proposition \ref{prop-elem8} and 
 Proposition \ref{prop-gauge} to conclude. 
\end{proof}  
%
%%%%%%%%%%%%%%%%%%%%%%%%%%%%%%

In the case where the parameter $\nu$ is not zero, we adapt the definitions of 
 $P$ and $R$  by setting, for any $C^\infty$ functions $(v,w)$  with values in $\R^3 \times \R $, 
\begin{align} 
 \label{defUetP3D}   
P(v,w)  &:=   \Big( (\frac32-2\nu)v -\frac12 x \div v - \frac12  x\wedge (\curl v - \nabla  w)  , -2\div v \Big)  , 
 \\   \label{defR3D}  
  R (v,w) &:= \Big( (\frac52-2\nu)v -\frac12   x \div v  - \frac12  x\wedge (\curl v- \nabla  w)    , -\frac12  x \cdot   v  \Big),
    \end{align}
while we introduce the operators: 
\begin{align} 
   \label{defW3D} 
 Q (u,p) &:=  \Big(  \frac{1}{1- \frac{4 \nu}{3} } \big( \frac23 u - \frac16 (p x - \frac{1}{1-\nu} x \wedge \curl u )\big) , -  \frac16  x \cdot \curl u \Big) ,
  \\    \label{defS3D}  
    S(u,p) &:=  \Big( \frac{1}{(1-\nu)(1- \frac{4 \nu}{3})} \Big( \frac{1-2\nu }{3}  u - \frac16 ( {x   (- \div u + \Delta p)} - {x \wedge \curl u} - 2 \nabla p), 
     \\    \nonumber %\label{defS3D}  
   &\quad \quad  -  \frac16  x \cdot \curl u \Big)  .
\end{align}
Observe that from the expressions above, it is natural to exclude 
two values of $\nu$, again the critical value $1$ (regarding the ellipticity), and also $3/4$ (rather than $1/2$, as it was in the section devoted to the $2$D case). 
Next result establishes in particular that for $\nu \in \R \setminus \{\frac34 ,1 \}$,   the operator  
  $\mathfrak L_\nu$,  defined in  \eqref{Eq:LM-Op}   on $\R^3$, 
   is a  basic   transmutation of  $\Delta^{\otimes 4}$  by  $(P,Q,R,S)$. 
\begin{theorem}\label{th-autre}
 For $\nu \in \R$, the operator  
  $\mathfrak L_\nu$   is a post-catalysis  of $\Delta^{\otimes 4}$ by $(P,R)$.
Moreover, 
for $\nu \in \R \setminus \{\frac34 ,1 \}$,  $\mathfrak L_\nu$ is a pre-catalysis  of $\Delta^{\otimes 4}$ by $(Q,S)$.
\end{theorem}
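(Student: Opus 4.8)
The plan is to verify directly the two catalysis identities $\mathfrak L_\nu P = R\,\Delta^{\otimes 4}$ and $\Delta^{\otimes 4} Q = S\,\mathfrak L_\nu$ by explicit computation on a general smooth pair $(v,w)$, respectively $(u,p)$, since, by Definition~\ref{distri}, being a post-catalysis (resp. pre-catalysis) by differential operators is exactly these operator identities together with the fact that all the auxiliary operators are visibly linear differential operators with $C^\infty$ (here polynomial) coefficients. So the entire content is the two algebraic identities.

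For the post-catalysis part, I would feed $(v,w)$ into $P$ given by \eqref{defUetP3D}, getting $(u,p) = P(v,w)$ with $u = (\tfrac32-2\nu)v - \tfrac12 x\,\div v - \tfrac12 x\wedge(\curl v - \nabla w)$ and $p = -2\div v$, then compute $\mathfrak L_\nu(u,p) = (\Delta u - \nabla p, \div u - \nu p)$ term by term. The first component requires the standard identities for how $\Delta$ interacts with multiplication by $x$, such as $\Delta(x\,f) = x\,\Delta f + 2\nabla f$ and $\Delta(x\wedge F) = x\wedge\Delta F + 2\,\curl F$ (and $\Delta(x\cdot F) = x\cdot\Delta F + 2\,\div F$), so that when $v$ is eventually harmonic the "extra'' lower-order terms are what produce $R\,\Delta^{\otimes 4}(v,w)$; for a general (non-harmonic) $(v,w)$ one checks the identity holds termwise before any harmonicity is imposed, which is what the operator identity $\mathfrak L_\nu P = R\Delta$ asserts. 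The second component $\div u - \nu p$ is a scalar computation using $\div(x\,\div v) = x\cdot\nabla\div v + 3\div v$ in $3$D and $\div(x\wedge F)=-x\cdot\curl F$, and one reads off that it matches the second component of $R\,\Delta^{\otimes 4}(v,w)$, namely $-\tfrac12 x\cdot\Delta v$. The restriction $\nu\neq 1$ plays no role here (indeed Theorem~\ref{th-autre} asserts the post-catalysis for all $\nu\in\R$), so this half is a pure bookkeeping exercise in vector calculus identities.

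For the pre-catalysis part, now assume $\nu\in\R\setminus\{\tfrac34,1\}$ and plug $(u,p)$ into $Q$ from \eqref{defW3D}; the overall scalar $\big(1-\tfrac{4\nu}{3}\big)^{-1}$ and, in $S$, the extra factor $(1-\nu)^{-1}$ are precisely where the two excluded values enter as poles. I would compute $\Delta^{\otimes 4} Q(u,p)$ — i.e. apply the scalar Laplacian componentwise to the $\R^3\times\R$-valued expression — using again $\Delta(x\,f)=x\Delta f+2\nabla f$, $\Delta(x\wedge\curl u)=x\wedge\curl\Delta u + 2\,\curl\curl u$ and $\curl\curl u = \nabla\div u - \Delta u$, and compare with $S\,\mathfrak L_\nu(u,p) = S(\Delta u - \nabla p,\ \div u - \nu p)$, where $S$ is given by \eqref{defS3D}. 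The identity must hold for arbitrary smooth $(u,p)$, not just solutions, so one expands everything and matches coefficients of $u$, $p$, their first and second derivatives, and the terms multiplied by $x$, $x\wedge$, $x\cdot$. The main obstacle is exactly this: the algebra in the pre-catalysis identity is considerably heavier than in the post-catalysis one — $S$ mixes $\div u$, $\Delta p$, $x\wedge\curl u$ and $\nabla p$, so one has to carefully track several vector-calculus identities simultaneously and confirm that the rational coefficients in $\nu$ (built to have denominators $(1-\nu)$ and $(1-\tfrac{4\nu}{3})$) conspire to cancel correctly. A sensible sanity check before the general computation is to first verify the identity on the subspace where $\mathfrak L_\nu(u,p)=0$ (where $p$ is harmonic and $\div u$ is determined), which pins down the coefficients, and then confirm the off-shell terms vanish; but ultimately the theorem is proved by the stated direct computation.
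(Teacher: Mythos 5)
Your plan is exactly the paper's approach: the paper proves Theorem~\ref{th-autre} simply by noting that the two catalysis identities can be checked by direct computation, which is precisely the bookkeeping you outline. The vector-calculus identities you invoke (such as $\Delta(xf)=x\Delta f+2\nabla f$, $\Delta(x\wedge F)=x\wedge\Delta F+2\,\curl F$, $\div(x\wedge F)=-x\cdot\curl F$, and $\curl\curl u=\nabla\div u-\Delta u$) are the right ones and the computation goes through as you describe.
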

This result can also be proved by direct computations. 

%%
%%%%%%%%%%%%%%%%%%%%%%%%%%%%%%%%%%%%
%%%%%%%%%%%%%%%%%%%%%%%%%%%%%%%%%%%%
%%%%%%%%%%%%%%%%%%%%%%%%
%
\subsection{Consequences for local properties}
\label{cons3D}

Below we deduce from the previous results some local properties of  the $3$D Stokes operator, as we did in Section \ref{cons2D} for the $2$D Lam\'e-Navier operator. 
\medskip

There and here we did not mention the hypoellipticity result which can be deduced from Theorem \ref{th:transfert-HYPO},
 Theorem \ref{th:main2D}/Theorem \ref{th:main3D2} and from the hypoellipticity of the Laplace operator, since in this case this can be deduced easily from 
 a classical result, due to L. Schwartz, stating that a necessary and sufficient condition for a linear, constant coefficient differential operator to be hypoelliptic   is that the operator has a fundamental solution with singular support consisting of the origin alone. 
\medskip

As a consequence of Theorem \ref{th:transfert-WUC} and Theorem \ref{th:main3D2} we obtain the following result regarding the weak unique continuation. 
\begin{corollary} 
\label{th-WUC-stokes}
For  any  non-empty open subset $\tilde \Omega \subset  \R^3$, 
 for any non-empty open subset $\Omega$  of $\tilde \Omega$,  
  for any function $(u,p)$ in $C^\infty (\tilde \Omega ; \R^3 \times \R)$ satisfying $\Delta u = \nabla p $ and $ \div u= 0$ in  $\tilde \Omega$ and $(u,p)=0$ in $ \Omega$,
   then  $(u,p)=0$ in $\tilde \Omega$.
\end{corollary}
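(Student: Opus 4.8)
The plan is to deduce Corollary \ref{th-WUC-stokes} directly from the transfer machinery for the weak unique continuation property, applied to the specific transmutation constructed in Theorem \ref{th:main3D2}. The statement of Corollary \ref{th-WUC-stokes} is precisely the assertion that the $3$D steady Stokes operator $\mathfrak L_0$ (the operator $(u,p) \mapsto (\Delta u - \nabla p, \div u)$, i.e. $\mathfrak L_\nu$ with $\nu = 0$, written out componentwise in Definition \ref{def-WUC}) satisfies the weak unique continuation property in the sense of Definition \ref{def-WUC}, since $Au = 0$ for $A = \mathfrak L_0$ exactly means $\Delta u = \nabla p$ and $\div u = 0$.

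First I would invoke Theorem \ref{th:main3D2} with, say, $\lambda = 0$: this exhibits $\mathfrak L_0$ as a golden differential transmutation of $B = \Delta^{\otimes 4}$ by the explicit philosopher's stone $(P, \tfrac13 R^*, R, \tfrac13 P^*, T)$, where $P$, $R$, $T$ are the variable-coefficient differential operators written down just before that theorem. In particular $\mathfrak L_0$ is a silvern differential transmutation of $\Delta^{\otimes 4}$, hence a fortiori a differential pre-transmutation of $\Delta^{\otimes 4}$ (a silvern transmutation is by definition a basic transmutation plus a pre-transmutation, and all auxiliary operators are differential with $C^\infty$ coefficients). Next I would recall that the Laplace operator $\Delta$ satisfies the weak unique continuation property — this is a classical fact for second-order elliptic operators with smooth coefficients (Aronszajn–Cordes) — and that the tensorized operator $\Delta^{\otimes 4}$ therefore also satisfies it: a zero of $\Delta^{\otimes 4}$ on $\tilde\Omega$ vanishing on an open subset $\Omega$ is a quadruple of harmonic functions each vanishing on $\Omega$, hence each vanishing on $\tilde\Omega$ by the scalar result applied component by component. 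Then Theorem \ref{th:transfert-WUC}, applied with $A = \mathfrak L_0$ and $B = \Delta^{\otimes 4}$, immediately yields that $\mathfrak L_0$ satisfies the weak unique continuation property, which is exactly the content of the corollary.

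Concretely, unwinding the proof of Theorem \ref{th:transfert-WUC} in this case: given $(u,p)$ smooth on $\tilde\Omega$ with $\mathfrak L_0(u,p) = 0$ on $\tilde\Omega$ and $(u,p) = 0$ on $\Omega$, one sets $(v,w) := \tfrac13 R^*(u,p)$ (the second stone $Q = \tfrac13 R^*$), which is again a smooth $\R^3\times\R$-valued function vanishing on $\Omega$; by pre-catalysis $\Delta^{\otimes 4}(v,w) = S\,\mathfrak L_0(u,p) = 0$ on $\tilde\Omega$ with $S = \tfrac13 P^*$; the weak unique continuation property of $\Delta^{\otimes 4}$ forces $(v,w) = 0$ on $\tilde\Omega$; and finally pre-regeneration $PQ + T\mathfrak L_0 = \Id$ together with $\mathfrak L_0(u,p)=0$ gives $(u,p) = P(v,w) = 0$ on $\tilde\Omega$.

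The only point requiring care — and the one I would flag as the main obstacle, though it is more bookkeeping than genuine difficulty — is verifying that the hypotheses of Theorem \ref{th:transfert-WUC} are literally met: namely that $\mathfrak L_0$ is a \emph{pre}-transmutation (it is, being silvern), that all entries of the philosopher's stone are genuine linear differential operators with $C^\infty$ coefficients (they are: $P, Q, R, S, T$ are polynomial-coefficient differential operators, and formal adjoints of such are again such), and that the scalar weak unique continuation result propagates correctly to the tensorized operator $\Delta^{\otimes 4}$ on $\R^3$ (it does, coordinatewise). Once these are checked the corollary is immediate. As a remark one could add that the same argument with the $2$D transmutation of Theorem \ref{th:main2D} recovers, for $\nu \neq \tfrac12, 1$, the corresponding $2$D statement already contained in Corollary \ref{th:rw}, and that for $\nu = 0$ in $2$D this is the Fabre–Lebeau weak unique continuation result for the steady Stokes system.
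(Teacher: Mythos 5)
Your proposal is correct and follows exactly the route the paper intends: the paper presents Corollary \ref{th-WUC-stokes} simply as "a consequence of Theorem \ref{th:transfert-WUC} and Theorem \ref{th:main3D2}" without further detail, and your unwinding (golden $\Rightarrow$ silvern $\Rightarrow$ differential pre-transmutation, weak unique continuation for $\Delta$ tensorized componentwise to $\Delta^{\otimes 4}$, then transfer via $(v,w)=Q(u,p)$, pre-catalysis, and pre-regeneration) is precisely the intended argument. The bookkeeping points you flag — that $P,Q,R,S,T$ are genuine polynomial-coefficient differential operators and that the componentwise extension to $\Delta^{\otimes 4}$ is valid — are indeed the only things to check, and they check out.
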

A similar result has already been obtained by Fabre and Lebeau  in  \cite{fabrelebeau} with a completely different approach based on Carleman estimates and pseudo-differential analysis. The viewpoint there is to consider the $3$D Stokes operator as the composition of the Leray-Helmholtz projection, which is a non-local operator, and of the Laplace operator. On the opposite,  here, we link the  $3$D Stokes operator to the Laplace operator by means of five local operators, the ones of the philosopher's stone above, what is arguably a good deal.
\medskip

Let us also mention the result by Dehman and Robbiano in  \cite{DR} regarding the Lam\'e-Navier operator in the elliptic regime.  
However, in the latter,  as already mentioned below \eqref{LN-eq},  there is no non-local feature. 
\medskip

As a consequence of Theorem \ref{th:transfert-SUC} and Theorem \ref{th:main3D2} we recover the following result regarding the weak unique continuation, which was obtained in  \cite{Regbaoui} by refining the technics in \cite{fabrelebeau}.
\begin{corollary} 
\label{th-SUC-stokes}
For any non-empty open subset $\Omega$ of $ \R^3$, 
 for any $x_0$ in $\Omega$, 
  for any $(u,p)$ in $C^\infty (\Omega ; \R^3 \times \R)$
     satisfying $\Delta u = \nabla p $ and $ \div u= 0$ in  $ \Omega$  
  and  that there exists $R > 0$ such that, for any $N \in \N$, 
  there exists $C_N > 0$, for any $r$ in $(0,R)$, 
  \begin{equation*}
 \int_{B(x_0,r)}  (\vert  u \vert^2 + \vert  p \vert^2)  \, dx \leq C_N \, r^N ,
\end{equation*}
then $(u,p)=0$ in $ \Omega$.
\end{corollary}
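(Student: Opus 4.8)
The plan is to deduce Corollary \ref{th-SUC-stokes} directly from the transfer theorem for the strong unique continuation property, Theorem \ref{th:transfert-SUC}, exactly as Corollary \ref{th-WUC-stokes} was deduced from Theorem \ref{th:transfert-WUC}. The ingredients are: $\mathfrak L_0$ is the $3$D steady Stokes operator; by Theorem \ref{th:main3D2} it is a golden differential transmutation of $\Delta^{\otimes 4}$ by an explicit philosopher's stone, and in particular it is a pre-transmutation of $\Delta^{\otimes 4}$ (being golden, it is silvern, hence a pre-transmutation); the Laplace operator, and therefore its tensorization $\Delta^{\otimes 4}$, satisfies the strong unique continuation property (this is the classical result for $\Delta$, applied componentwise, since the tensorized system decouples into four scalar Laplace equations); and $\mathfrak L_0$ is an elliptic operator — indeed $\mathfrak L_\nu$ is strongly elliptic for $\nu \neq 1$, as recorded below \eqref{Eq:q}, and here $\nu = 0$. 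Granting these four points, Theorem \ref{th:transfert-SUC} applies verbatim and yields the conclusion.

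First I would observe that the hypothesis on $(u,p)$ in the statement of Corollary \ref{th-SUC-stokes} is precisely the statement that $u$ satisfies $\mathfrak L_0(u,p) = 0$ on $\Omega$, together with the smallness condition \eqref{infini} written for the vector $(u,p)$ in place of $u$; so the data of Corollary \ref{th-SUC-stokes} is an instance of the hypothesis of Definition \ref{def-SUC} applied to the operator $A = \mathfrak L_0$. Next I would recall that, by Theorem \ref{th:main3D2} with $\lambda = 0$, the operator $\mathfrak L_0$ is a golden, hence silvern, hence pre-transmutation of $B = \Delta^{\otimes 4}$ by the philosopher's stone $(P,\tfrac13 R^*,R,\tfrac13 P^*,T)$, all of whose auxiliary operators are linear differential operators with $C^\infty$ coefficients; thus $\mathfrak L_0$ is a differential pre-transmutation of $\Delta^{\otimes 4}$ in the sense of Definition \ref{distri}. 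Then I would note that $\Delta^{\otimes 4}$ satisfies the strong unique continuation property: it acts componentwise as the scalar Laplacian (see \eqref{Eq:tensor}), the classical scalar result of Definition \ref{def-SUC} holds for $\Delta$, and a function $v = (v_1,\dots,v_4)$ satisfying $\Delta^{\otimes 4} v = 0$ with $\int_{B(x_0,r)} |v|^2 \le C_N r^N$ has each component $v_i$ satisfying $\Delta v_i = 0$ with the same estimate, hence $v_i = 0$, hence $v = 0$. Finally, since $\mathfrak L_0$ is elliptic (the case $\nu = 0$ of the strong ellipticity visible in \eqref{Eq:q}, where the quadratic form has the coercive leading part $\int |\nabla u|^2$ after elimination of $p$), Theorem \ref{th:transfert-SUC} applies and gives $(u,p) = 0$ on $\Omega$.

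The whole argument is a one-line deduction once the transfer machinery is in place, so there is no genuine obstacle; the only point deserving care is the ellipticity of $\mathfrak L_0$ as a system, since $\mathfrak L_\nu$ is a first-order-in-$p$, second-order-in-$u$ mixed-order operator rather than a classical elliptic operator in the naive sense. Here ellipticity is to be understood in the Douglis–Nirenberg (or Agmon–Douglis–Nirenberg) sense appropriate to the Stokes system, under which the principal symbol is invertible away from the origin; equivalently, one can note that \eqref{infini} for $(u,p)$ together with $\mathfrak L_0(u,p) = 0$ gives, after taking the divergence of $\Delta u = \nabla p$ and using $\div u = 0$, that $\Delta p = 0$ and $\Delta u = \nabla p$, so that in fact each component of $u$ and $p$ is harmonic, and then the interior-regularity/scaling step in the proof of Theorem \ref{th:transfert-SUC} — which is exactly the place where ellipticity of $A$ is used — goes through without modification. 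With that understood, the statement follows, and it reproduces the result of Regbaoui in \cite{Regbaoui}.
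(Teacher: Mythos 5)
Your proposal follows exactly the route the paper takes: Corollary \ref{th-SUC-stokes} is stated in the paper as an immediate consequence of Theorem \ref{th:transfert-SUC} and Theorem \ref{th:main3D2}, with $B=\Delta^{\otimes 4}$ satisfying the strong unique continuation property componentwise and $\mathfrak L_0$ being a golden (hence pre-) differential transmutation of it, so your deduction is the intended one. One small slip in your final remark: from $\Delta u=\nabla p$ and $\div u=0$ one gets $\Delta p=0$ but only $\Delta^2 u=0$, so the components of $u$ are biharmonic rather than harmonic; this does not affect the argument, since the interior Caccioppoli-type estimate used in the proof of Theorem \ref{th:transfert-SUC} holds just as well for the biharmonic operator.
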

%

%%%%%%%%%%%%%%%%%%%%%%%%%%%%%%%%%%%%
%%%%%%%%%%%%%%%%%%%%%%%%%%%%%%

  Finally, as a direct consequence of Theorem \ref{th:transfert}, Theorem \ref{th:main3D2} and Theorem  \ref{th:approxharm1}, we have the following new result, regarding the $3$D steady Stokes operator  $\mathfrak L_0$, defined in  \eqref{Eq:LM-Op} with $\nu =0$.
 %, 
%
\begin{corollary}\label{th:rw3D}
The $3$D steady Stokes operator  $\mathfrak L_0$  satisfies  the Runge property with point controls on  $\R^3$.
\end{corollary}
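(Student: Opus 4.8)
The plan is to deduce the corollary directly from the transfer theorem for the Runge property, applied to the transmutation structure already exhibited in Section \ref{sec-3D}. First I would note that, by Theorem \ref{th:main3D2}, the operator $\mathfrak L_0$ is a golden differential transmutation of $B := \Delta^{\otimes 4}$ by the philosopher's stone $(P,\frac13 R^* +\lambda \hat Q,R ,\frac13 P^*+\lambda \hat S,T+\lambda \hat T)$ for any $\lambda \in \R$ — a differential transmutation indeed, since all the operators $P,R,T,\hat Q,\hat S,\hat T$ appearing in the stone are linear differential operators with $C^\infty$ coefficients. Every golden transmutation is in particular a silvern transmutation, so $\mathfrak L_0$ is a silvern differential transmutation of $\Delta^{\otimes 4}$, and the structural hypothesis of Theorem \ref{th:transfert} with $A := \mathfrak L_0$ and $B := \Delta^{\otimes 4}$ is satisfied.

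Second, I would verify that $\Delta^{\otimes 4}$ itself satisfies the Runge property with point controls on $\R^3$. This follows from Theorem \ref{th:approxharm1} by the componentwise nature of the tensorized operator: a system $\Delta^{\otimes 4} v = f$ on a neighbourhood of a compact $K$ decouples into four independent scalar equations $\Delta v_i = f_i$, and Theorem \ref{th:approxharm1} with $d=3$ produces, for each $i$, a global approximant $\overline v_i$ on $\R^3$ with $\Delta \overline v_i = f_i + g_i$, where $g_i$ is a finite linear combination of derivatives of Dirac masses located at the prescribed points $E$ and $\|\overline v_i - v_i\|_{C^k(K)} \leq \eps$. Assembling $\overline v := (\overline v_1,\dots,\overline v_4)$ and $g := (g_1,\dots,g_4)$ gives a global approximant and a point-force control for $\Delta^{\otimes 4}$, with $\|\overline v - v\|_{C^k(K)}$ bounded by the maximum of the four scalar errors; here $g$ is a linear combination of derivatives of Dirac masses located in $E$ in the vector-valued sense, exactly as in Definition \ref{RungeP}.

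Finally, I would apply Theorem \ref{th:transfert} to conclude that $\mathfrak L_0$ satisfies the Runge property with point controls on $\R^3$. No serious obstacle arises: the substantive work is already contained in Theorem \ref{th:main3D2} and in the transfer machinery. The only points deserving a line of comment are the componentwise reduction for $\Delta^{\otimes 4}$ described above, and the fact — used in the proof of Theorem \ref{th:transfert} and guaranteed by Proposition \ref{prop-post-r} — that the weak post-regeneration property of the silvern transmutation forces the transferred relation $A\overline u = f + g$ to hold \emph{exactly}, the approximation error being absorbed by applying the differential operator $P$ of the philosopher's stone, which also dictates the loss of one order of smoothness ($C^{k+m}$ on the $\Delta^{\otimes 4}$-side against $C^k$ on the $\mathfrak L_0$-side, $m$ being the differential order of $P$).
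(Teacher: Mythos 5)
Your proof is correct and follows exactly the paper's route: the corollary is stated in the text as a direct consequence of Theorem \ref{th:transfert}, Theorem \ref{th:main3D2}, and Theorem \ref{th:approxharm1}, which is precisely the chain you invoke. The one step you spell out that the paper leaves implicit — the componentwise reduction showing $\Delta^{\otimes 4}$ inherits the Runge property with point controls from the scalar Laplacian on $\R^3$ — is correct and harmless to make explicit.
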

 Corollary \ref{th:rw3D} extends the result obtained by Glass and Horsin in \cite{GH}  
  where the authors make use of nonlocal operators and fail to prove the full Runge property in particular regarding the nature of the poles. 
 Such a result, which can be thought  in controllability theory as 
an approximate controllability result with point controls was actually the starting point of our investigations. 
\medskip

Finally as a consequence of Theorem \ref{th-lap-poly}, of Theorem \ref{th:transfert-poly} and Theorem \ref{th:main3D2}
we also have the following result regarding the  polynomial Runge property. 
\begin{corollary}\label{co-p}
The   $3$D steady Stokes operator  $\mathfrak L_0$  satisfies  the  polynomial Runge property.
\end{corollary}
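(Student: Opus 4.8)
The plan is to invoke the machinery already set up in the paper: Corollary~\ref{co-p} asserts that the $3$D steady Stokes operator $\mathfrak L_0$ satisfies the polynomial Runge property, and this should follow by combining a transmutation identification of $\mathfrak L_0$ with the polynomial Runge property of the Laplace operator, exactly as was done for the ordinary Runge property in Corollary~\ref{th:rw3D}. Concretely, Theorem~\ref{th:main3D2} exhibits $\mathfrak L_0$ as a golden differential transmutation of $\Delta^{\otimes 4}$ by the philosopher's stone $(P,\tfrac13 R^*+\lambda\hat Q, R, \tfrac13 P^*+\lambda\hat S, T+\lambda\hat T)$, and in particular (taking $\lambda=0$, say) as a silvern differential transmutation. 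Theorem~\ref{th-lap-poly} gives that $\Delta$ — hence $\Delta^{\otimes 4}$, component by component — satisfies the polynomial Runge property on $\R^3$. So the remaining ingredient is the transfer statement, Theorem~\ref{th:transfert-poly}, which says that a silvern differential transmutation with first stone $P$ having \emph{polynomial coefficients} transfers the polynomial Runge property.

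The first thing I would check is precisely that hypothesis: the operator $P$ in Theorem~\ref{th:main3D2}, namely
$$P(v,w) = \Big( \tfrac32 v - \tfrac12 x\,\div v - \tfrac12\, x\wedge(\curl v - \nabla w), -2\div v\Big),$$
indeed has polynomial (in fact affine) coefficients in $x$, so Theorem~\ref{th:transfert-poly} applies verbatim. One small point of bookkeeping: Theorem~\ref{th:transfert-poly} is stated for a transmutation of $B$ by $(P,Q,R,S,T)$, and here $B=\Delta^{\otimes 4}$, which is a tensorization of $\Delta$; one should note that $\Delta^{\otimes 4}$ inherits the polynomial Runge property from $\Delta$ simply because $\mathcal P_n(\Delta^{\otimes 4}) = \mathcal P_n(\Delta)^{4}$ and the $C^k$ approximation is done in each component independently. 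Alternatively, this could be folded into the statement of Theorem~\ref{th-lap-poly} for tensor powers, but it is cleanest to remark it explicitly.

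Thus the proof is essentially a one-line chaining: $\Delta^{\otimes 4}$ satisfies the polynomial Runge property on $\R^3$ (Theorem~\ref{th-lap-poly}, componentwise); $\mathfrak L_0$ is a silvern differential transmutation of $\Delta^{\otimes 4}$ by a philosopher's stone whose first operator $P$ has polynomial coefficients (Theorem~\ref{th:main3D2}); hence by Theorem~\ref{th:transfert-poly}, $\mathfrak L_0$ satisfies the polynomial Runge property. I do not anticipate a genuine obstacle here — the corollary is designed to be a formal consequence of the transfer theorem — but the one place requiring care is confirming that the relevant stone in the \emph{golden} transmutation of Theorem~\ref{th:main3D2} is the one playing the role of $P$ in the \emph{silvern} sub-structure, and that it genuinely has polynomial (not merely smooth) coefficients; a reader might otherwise worry that the gauge terms $\hat Q,\hat S,\hat T$ spoil this, but since only $P$ itself (which is gauge-independent) enters the hypothesis of Theorem~\ref{th:transfert-poly}, there is nothing to check beyond inspecting the displayed formula for $P$.
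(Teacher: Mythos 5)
Your proposal is correct and follows exactly the paper's route: the paper derives Corollary~\ref{co-p} by chaining Theorem~\ref{th-lap-poly}, Theorem~\ref{th:transfert-poly}, and Theorem~\ref{th:main3D2}, which is precisely what you do, including the verification that the stone $P$ from \eqref{defUetP3D} (with $\nu=0$) has polynomial coefficients so that Theorem~\ref{th:transfert-poly} applies. The additional remarks about $\Delta^{\otimes 4}$ inheriting the property componentwise and about the gauge terms not affecting the hypothesis are correct and helpful bookkeeping, matching the paper's intent.
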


\subsection{Generation of fundamental solutions}

As a corollary of  Theorem \ref{th:main3D2} and Theorem \ref{th-autre}, 
we  have the following  result regarding the 
generation of  fundamental solutions of  the  Lamé-Navier   and Stokes systems by the means of 
fundamental solutions of  the Laplace problem.  

\begin{corollary} \label{th-fonda3D}
Let $(b,c)$  in $\R^3 \times \R$.  
Let $\Omega$  a domain in $\R^3$ with $0 \in \Omega$.
 \begin{enumerate}[leftmargin=19pt]
\item Let $\nu \in \R$.  Let $(v,w):\Omega \rightarrow \R^3\times \R$  satisfies 
$(\Delta  v,\Delta  w) = (b \delta_0,c \delta_0)$. 
Set $ (u,p) :=  P (v,w)$, where $P$ is given by  \eqref{defUetP3D}.
 Then $\mathfrak L_\nu (u,p)=( (2-2\nu) b \delta_0 , 0)$ in $ \Omega$.
 In the particular case where 
 $(v,w):\Omega \rightarrow \R^3\times \R$ is 
 given, for $x$ in $\R^3$ by 
$v(x) := \frac{ 1 }{ | x |}  b$ and $w (x) := \frac{ c }{ | x |}  $,  
 then
 \begin{equation}
  \label{osee}
 u(x) = \frac{ 1 }{ | x |}  \Big(  (1- 2\nu)\Id +  \frac{ x }{ | x |}  \otimes \frac{ x }{ | x |}  \Big) b \quad  \text{ and }  \quad p=   \frac{ 2 b \cdot x}{ | x |^3}  .
 \end{equation}
\item Let  $\nu \in \R \setminus \{\frac34 ,1 \}$ and 
$(u,p):\Omega \rightarrow \R^3\times \R$  
such that $\mathfrak L_\nu (u,p)=( (2-2\nu) b \delta_0 , 0)$ in $ \Omega$. Set $(v,w) := Q(u,p)$, where $Q$ is given by   \eqref{defW3D}. 
Then $ \Delta v   = b \delta_0$ and $w$ is harmonic. 
If moreover  $\nu=0$, then $ (u,p) :=  P (v,w)$.   
\end{enumerate}
\end{corollary}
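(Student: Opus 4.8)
The plan is to reduce everything to the transfer theorems already proved in Section~\ref{sec-dt}, together with the structural result Theorem~\ref{th:main3D2} (and Theorem~\ref{th-autre} for the non-zero~$\nu$ part) and the algebraic identities $x\wedge\curl(b\delta_0)$, $x\div(b\delta_0)$, $x\cdot(b\delta_0)$, exactly as in the $2$D Corollary~\ref{th-fonda2D}. For part~$(1)$, I would first record that by Theorem~\ref{th-autre} (or directly by the computation behind Theorem~\ref{th:main3D2}) one has post-catalysis $\mathfrak L_\nu P = R\,\Delta^{\otimes 4}$. Applying $P$ to a solution $(v,w)$ of $(\Delta v,\Delta w)=(b\delta_0,c\delta_0)$ and using post-catalysis gives $\mathfrak L_\nu(u,p)=R(b\delta_0,c\delta_0)$, so it remains to evaluate $R(b\delta_0,c\delta_0)$. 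Here I would use the elementary distributional identities: for a constant vector $b$, $x\div(b\delta_0)=-b\delta_0$, $\curl(b\delta_0)\wedge\text{-with-}x$ contributes $x\wedge\curl(b\delta_0)=-2b\delta_0$ in $3$D (one must be careful with the $3$D combinatorial factor, which differs from the $2$D one), and $x\cdot(b\delta_0)=0$; also $\nabla w$ applied against $x\wedge(\cdot)$ picks up $x\wedge\nabla(c\delta_0)$ which, after the multiplication by $x$, vanishes as a distribution. Plugging the definition \eqref{defR3D} of $R$ then yields $R(b\delta_0,c\delta_0)=((2-2\nu)b\delta_0,0)$, which is the claim. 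The explicit formula \eqref{osee} is then obtained by taking the standard harmonic choice $v=b/|x|$, $w=c/|x|$ (so $\Delta v=-4\pi b\delta_0$ up to the usual normalisation, which I would absorb into the meaning of ``$\Delta v=b\delta_0$'' as a local identity) and simply computing $Pv$: the term $-\tfrac12 x\div v+\text{curl terms}$ produces, after differentiating $1/|x|$, the rank-one correction $\tfrac{x}{|x|}\otimes\tfrac{x}{|x|}$, and $-2\div v$ gives the pressure $2b\cdot x/|x|^3$; the $w$-contributions cancel because $v,w$ are taken proportional in the right way, or more honestly one checks they combine into the stated expression.

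For part~$(2)$, the argument mirrors Corollary~\ref{th-fonda2D}$(2)$: assume $\nu\in\R\setminus\{\tfrac34,1\}$ and $\mathfrak L_\nu(u,p)=((2-2\nu)b\delta_0,0)$ in $\Omega$, set $(v,w):=Q(u,p)$. By the pre-catalysis identity $\Delta^{\otimes 4} Q = S\,\mathfrak L_\nu$ of Theorem~\ref{th-autre}, one gets $(\Delta v,\Delta w)=S((2-2\nu)b\delta_0,0)$, so again everything reduces to evaluating the right-hand side using \eqref{defS3D} and the same distributional identities ($x\div$, $x\wedge\curl$, $x\cdot\curl$ of $b\delta_0$, plus the terms $x\Delta p$, $\nabla p$ acting on a Dirac that are absent here since the source has zero pressure component). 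One should find $\Delta v=b\delta_0$ and $\Delta w=0$, i.e. $w$ harmonic. Finally, in the case $\nu=0$ we can close the loop: the full golden transmutation of Theorem~\ref{th:main3D2} gives pre-regeneration $PQ+T\mathfrak L_0=\Id$, so $P(v,w)=PQ(u,p)=(u,p)-T\mathfrak L_0(u,p)=(u,p)-T((2-2\nu)b\delta_0,0)$ with $\nu=0$; it remains to check $T(2b\delta_0,0)=0$, which follows from the explicit $T(u,p)=(\tfrac13 px+\tfrac1{12}|x|^2u,\,\tfrac73 p+\tfrac13 x\cdot u)$ since every term carries a factor $x$, $|x|^2$, or $x\cdot(\cdot)$ that annihilates a Dirac at the origin. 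Hence $(u,p)=P(v,w)$.

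The main obstacle I anticipate is purely computational bookkeeping rather than conceptual: getting the $3$D combinatorial constants right in the distributional identities. The operators $P,R,Q,S$ in \eqref{defUetP3D}--\eqref{defS3D} contain the $3$D vector-calculus pieces $x\div v$, $x\wedge\curl v$, $x\wedge\nabla w$, $x\cdot\curl u$, and applying these to $b\delta_0$ requires Leibniz-type manipulations of the form $x_i\partial_j\delta_0=-\delta_{ij}\delta_0$, together with the identity $x\wedge\curl(b\delta_0)+x\div(b\delta_0)+(\text{gradient part})=-(\text{fixed multiple})\,b\delta_0$; the precise multiple (which must come out so that the coefficient of $v$ in $R$, namely $\tfrac52-2\nu$, combines with $-\tfrac12\cdot(-1)$ from $x\div$ and $-\tfrac12\cdot(-?)$ from $x\wedge\curl$ to give exactly $2-2\nu$) is where an arithmetic slip is easiest. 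Once those identities are verified — and they are the $3$D analogues of the two lines ``$x^\perp\rot(b\delta_0)=-b\delta_0$ and $x\,\div(b\delta_0)=-b\delta_0$'' used in the proof of Corollary~\ref{th-fonda2D} — the rest of the proof is a formal application of post-catalysis, pre-catalysis, and pre-regeneration, together with the vanishing of $T$ on Diracs at $0$, exactly as written above.
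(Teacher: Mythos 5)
Your plan is essentially the paper's proof: reduce part~(1) to post-catalysis $\mathfrak L_\nu P = R\,\Delta^{\otimes 4}$ from Theorem~\ref{th-autre}, evaluate $R$ on Diracs via a handful of distributional identities, do the mirror-image computation with $S$ for part~(2), and close the loop for $\nu=0$ with pre-regeneration from Theorem~\ref{th:main3D2} together with $T(\,\cdot\,\delta_0,0)=0$. The one concrete error is the sign in the $3$D identity: you state $x\wedge\curl(b\delta_0)=-2b\delta_0$, but the correct value is $x\wedge\curl(b\delta_0)=+2b\delta_0$. Indeed, with the vector triple-product expansion $x\times(\nabla\delta_0\times b)=(x\cdot b)\nabla\delta_0-(x\cdot\nabla\delta_0)\,b$, and the $3$D identities $(x\cdot b)\nabla\delta_0=-b\delta_0$ and $x\cdot\nabla\delta_0=-3\delta_0$, one gets $-b\delta_0+3b\delta_0=2b\delta_0$. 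With your sign you would get $R(b\delta_0,c\delta_0)=((4-2\nu)b\delta_0,0)$, not $((2-2\nu)b\delta_0,0)$. The discrepancy with the $2$D case that you rightly flag as dangerous is real: in $2$D the paper uses $x^\perp\rot(b\delta_0)=-b\delta_0$, whereas in $3$D the analogous factor is $+2$, and moreover the two operators $R$ in \eqref{defR2D} and \eqref{defR3D} carry the rotational term with \emph{opposite} signs ($+\tfrac12 x^\perp\rot$ versus $-\tfrac12 x\wedge\curl$), so the $2$D and $3$D computations are not direct transcriptions of each other. Once this sign is corrected, all the book-keeping you sketch, including the evaluation of $S((2-2\nu)b\delta_0,0)=(b\delta_0,0)$ for part~(2) and the observation that every term of $T$ carries a factor of $x$, $|x|^2$ or $x\cdot(\cdot)$ killing a Dirac at the origin, matches the paper's argument.
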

\begin{proof}
By the first part of Theorem \ref{th-autre}, 
 $\mathfrak L_\nu(u,p)= R( b  \delta_0 ,c \delta_0 ) $.
Some elementary computations  provide:
\begin{equation} \label{elem}
 x \cdot \curl (b  \delta_0) =0, 
\quad x \, \div (b  \delta_0)  = -  b \delta_0 ,
\quad x \wedge \curl (b  \delta_0) =2b  \delta_0 
 \,   \text{ and }  \, 
  x \wedge \nabla  \delta_0 = 0 
\end{equation}
so that  $R ( b  \delta_0 ,c \delta_0 ) = ((2-2\nu)b  \delta_0 ,0)$, and therefore   $\mathfrak L_\nu(u,p)=  ((2-2\nu)b  \delta_0 ,0).$
 The particular case can be proved by direct computation.
For the second part, we first use the second part of Theorem \ref{th-autre},  
$\Delta (v,w) =S  \mathfrak L_\nu (u,p) = S((2-2\nu)b  \delta_0 ,0)$, and using  \eqref{elem} again, we arrive at $\Delta (v,w) = (b \delta_0 ,0)$.
 Finally, in the case where $\nu=0$, we apply the pre-regeneration identity (from  Theorem \ref{th:main3D2})  and observe that $T( b \delta_0 , 0)= 0$ 
 to conclude that $(u,p) = P(v,w)$. 
\end{proof}
In the  peculiar case of Corollary  \ref{th-fonda3D}
where $b=0$, $c=0$ and $\nu=0$, we recover some earlier results on the representation,  in an arbitrary domain, of solutions to the $3$D steady Stokes system   by Zsupp\'an in \cite{Zsuppan}.

 %%% %%% %%% %%% %%% %%% %%% %%% %%% %%% %%% %%% %%% %%% %%% %%% %%% %%% %%% %%% %%% %%% %%% %%% %%% %%% %%% 
\subsection{The Lorentz reflection operator as a proper silvern differential self-transmutation}

In this section we reinterpret the  Lorentz reflection introduced in 
\cite{Lorentz} as a proper silvern differential transmutation of the $3$D Stokes operator of itself by a non-trivial philosopher's stone (by opposition to the one in Proposition \ref{prop-elem0}). 
We also refer to \cite{KK}, where the  Lorentz reflection  at stake here is tagged as the Lorentz ``hat" operator.  This operator is instrumental in the process to extend a solution to the $3$D Stokes system in a half-space, with a Dirichlet boundary condition, 
extending the classical Schwarz principle for the harmonic functions. 
 Lorentz ``hat"  operator should be rather thought as a preparation to reflection for the Stokes operator, while the corresponding ``hat"  operator for the harmonic functions  simply is the identity.

 Let us define the matrix:
  $$J_3 :=  \begin{bmatrix}
 1&0 &   0
  \\ 0  &  1 &   0 
  \\ 0 & 0 & -1
 \end{bmatrix} .
  $$
We also 
  define the two following linear differential operators with variable coefficients: $P_L$ and $Q_L$, defined by their action on $C^\infty$ functions $(v,w)$  with values in $\R^3 \times \R $ by 
\begin{align*} 
% \label{defUetP3D}   
P_L(v,w)  &:=   \Big( -J_3 v - 2 x_3 \nabla v_3 +  x_3^2 \nabla w   , w + 2x_3 \partial_3 w - 4 \partial_3 v_3  \Big)  , 
 \\  % \label{defR3D}  
R_L   (v,w) &:= \Big( - J_3 v - 2x_3  \nabla  v_3 +  x_3^2 ( \nabla \Delta w - \nabla \div v)  , 
\\ \ &\quad \quad\quad- w - 2 x_3 v_3 + x_3^2 \Delta w - x_3^2 \div v \Big).
    \end{align*}
 \begin{proposition} \label{Lorentz}
  The  $3$D Stokes operator 
 $\mathfrak L_0 $, that is the operator defined in  \eqref{Eq:LM-Op} with $\nu =0$, 
 is a proper silvern differential transmutation of itself by the philosopher's stone 
 $(P_L,P_L,R_L,R_L, 0)$. 
\end{proposition}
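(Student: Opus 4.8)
The plan is to verify directly the three defining identities of a proper silvern differential transmutation of $\mathfrak L_0$ by itself via the philosopher's stone $(P_L,P_L,R_L,R_L,0)$: namely the post-catalysis $\mathfrak L_0 P_L = R_L \mathfrak L_0$, the pre-catalysis $\mathfrak L_0 P_L = R_L \mathfrak L_0$ (which is the same identity here since $Q=P_L$ and $S=R_L$), the pre-regeneration $P_L P_L + 0\cdot \mathfrak L_0 = \Id$, i.e.\ $P_L^2 = \Id$, and the properness condition $\ker \mathfrak L_0 \cap \ker P_L = 0$. Since $T=0$, the silvern property reduces to basic transmutation (post- and pre-catalysis) plus the single regeneration identity $P_L^2=\Id$; by Proposition \ref{prop-elem1} the latter already forces $\ker P_L = 0$, which gives properness for free.

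First I would establish the involution property $P_L^2 = \Id$. Writing $P_L(v,w) = (-J_3 v - 2x_3 \nabla v_3 + x_3^2 \nabla w,\ w + 2x_3 \partial_3 w - 4\partial_3 v_3)$, one computes $P_L$ applied to this pair and checks that all the mixed terms — those involving $x_3$, $x_3^2$, and the derivatives $\partial_3$ — cancel in pairs, leaving $(v,w)$. This uses only the algebra of $J_3$ (namely $J_3^2 = \Id$ and $J_3 e_3 = -e_3$), the product rule, and the commutation $\partial_3(x_3 \cdot) = \cdot + x_3 \partial_3$. The payoff is twofold: it is the pre-regeneration identity, and it shows $P_L$ is a bijection, hence $\ker P_L = 0$, settling properness.

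Next I would verify the catalysis identity $\mathfrak L_0 P_L = R_L \mathfrak L_0$. The natural strategy is to note that for $(v,w)$ with $\mathfrak L_0(v,w)=(f,g)$, i.e.\ $\Delta v = \nabla w + f$ and $\operatorname{div} v = g$, the definition of $R_L$ is engineered precisely so that $R_L(v,w)$ equals $P_L$ applied after substituting $\Delta w$ for $\nabla\cdot\text{(second slot)}$ and $\operatorname{div} v$ where they appear — indeed comparing the formulas for $P_L$ and $R_L$, the differences are exactly the replacements $\nabla w \rightsquigarrow \nabla\Delta w - \nabla\operatorname{div} v$ and $w \rightsquigarrow -w - \dots + x_3^2\Delta w - x_3^2\operatorname{div} v$ in the $x_3^2$-weighted terms. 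So one expands $\mathfrak L_0 P_L(v,w)$, repeatedly commutes $\Delta$ past the multiplication operators $x_3$ and $x_3^2$ (producing lower-order correction terms involving $\partial_3$ and constants), uses $\Delta(x_3^2 \phi) = x_3^2\Delta\phi + 4x_3\partial_3\phi + 2\phi$ and $\Delta(x_3\phi)=x_3\Delta\phi + 2\partial_3\phi$, and collects terms so that everything organizes into $R_L$ acting on $\mathfrak L_0(v,w)$. The identity $\operatorname{div}$ of the velocity slot of $\mathfrak L_0 P_L(v,w)$ matching the second slot of $R_L\mathfrak L_0(v,w)$ is the more delicate bookkeeping, since it couples the divergence of the $x_3^2\nabla w$ and $2x_3\nabla v_3$ terms with $\partial_3$ of the pressure slot of $P_L$.

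The main obstacle I anticipate is purely the combinatorial bookkeeping of the commutator corrections in the catalysis identity: there are many terms weighted by $x_3$ and $x_3^2$, and each commutation with $\Delta$ spawns first- and zeroth-order pieces that must cancel against pieces coming from differentiating the explicit $-J_3 v$, $w$, and $-4\partial_3 v_3$ terms. There is no conceptual difficulty — ellipticity, regularity, or boundary behaviour play no role, since this is a formal algebraic identity between differential operators with polynomial coefficients — but the verification is long. As the paper does elsewhere (e.g.\ Theorems \ref{th:main2D}, \ref{th:main3D2}, \ref{th-autre}), I would simply state that these identities are checked by direct computation, after noting the two key structural facts above (the involution $P_L^2=\Id$ and the way $R_L$ is obtained from $P_L$ by substituting the Stokes relations), and then invoke Proposition \ref{prop-elem1} to deduce properness from $\ker P_L = 0$.
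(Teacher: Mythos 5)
Your proposal is correct and takes essentially the same approach as the paper, which simply notes that it suffices to check $\mathfrak L_0 P_L = R_L \mathfrak L_0$ and $P_L^2 = \Id$; since $Q = P_L$ and $S = R_L$ the pre- and post-catalysis identities coincide, $T=0$ makes pre-regeneration the involution identity, and $P_L^2 = \Id$ forces $\ker P_L = 0$, settling properness. One small quibble: Proposition~\ref{prop-elem1} does not by itself give $\ker P_L = 0$ (it gives $\ker \mathfrak L_0 \cap \ker P_L = 0$, which is already the properness condition), but your direct argument from the involution is the cleaner route and is what you should rely on.
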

\begin{proof}
It is sufficient to check that  $AP_L = R_L A$ and that $P_L P_L = \Id$.
  \end{proof}
   A similar result holds in $2$D. Let us mention that such an operator can be used to obtain  quite easily the fundamental solutions of  the $2$D and $3$D Stokes operators in a half-space.

%%%%%%%%%%%%%%%%%%%%%
\section{Appendix}

We recall from  \cite{Axler} 
the following logarithmic conjugation result. 
 \begin{lemma} \label{Axler}
Let $\Omega$ a finitely path-connected domain of $\R^2$ and let us denote by $(H_j)_{1 \leq j \leq r}$ the holes. 
Let $u$ which satisfies $\Delta u =0$ in $\Omega$. For any $1 \leq j \leq r$, let $a_j \in H_j$.
Then there are a holomorphic function $f$ and  some real numbers  $(c_j )_{1 \leq j \leq r}$ such that, in $\Omega$, 
\begin{equation}
  \label{log-f}
u = \Re f + \sum_{1 \leq j \leq r} c_j \log | \cdot - a_j  |.
\end{equation}
\end{lemma}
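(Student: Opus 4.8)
The plan is to run the classical argument: the obstruction to the existence of a global harmonic conjugate of $u$ on $\Omega$ lives in the de Rham cohomology $H^1(\Omega)$, it is detected by the periods of the conjugate differential of $u$, and it is cancelled by subtracting suitable real multiples of the harmonic functions $\log|\cdot-a_j|$, whose conjugate differentials span that cohomology. Once the log-terms are subtracted, the resulting harmonic function has a global harmonic conjugate, hence is the real part of a holomorphic function.

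First I would introduce the conjugate differential. Since $\Delta u=0$ on $\Omega$, the real $1$-form
\[
\omega_u:=-(\partial_{x_2}u)\,dx_1+(\partial_{x_1}u)\,dx_2
\]
is closed, because $d\omega_u=(\Delta u)\,dx_1\wedge dx_2=0$. For the same reason each $g_j:=\log|\cdot-a_j|$ is harmonic on $\Omega$ (as $a_j\notin\Omega$), so the analogous form $\omega_{g_j}:=-(\partial_{x_2}g_j)\,dx_1+(\partial_{x_1}g_j)\,dx_2$ is closed on $\Omega$, and, being $d\arg(\cdot-a_j)$, its period over a loop winding once positively around $a_j$ equals $2\pi$. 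I would then invoke the topological input for a finitely connected planar domain: one may choose loops $\gamma_1,\dots,\gamma_r\subset\Omega$, with $\gamma_j$ surrounding the hole $H_j$, such that $(\gamma_j)_{1\le j\le r}$ generates $H_1(\Omega;\Z)$ and $\frac{1}{2\pi}\oint_{\gamma_k}\omega_{g_j}=\delta_{jk}$; here one uses that the $H_j$ are exactly the bounded connected components of $\R^2\setminus\Omega$, so that $\gamma_j$ has winding number $\delta_{jk}$ about any point of $H_k$.

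Next I would set, for $1\le j\le r$,
\[
c_j:=\frac{1}{2\pi}\oint_{\gamma_j}\omega_u\in\R
\]
(real because $\omega_u$ is a real form), and put $\tilde u:=u-\sum_{1\le j\le r}c_j\,g_j$, which is again harmonic on $\Omega$. By the choice of the $c_j$, the closed $1$-form $\omega_{\tilde u}=\omega_u-\sum_j c_j\,\omega_{g_j}$ has vanishing period over every $\gamma_k$, hence over every $1$-cycle in $\Omega$; a closed $1$-form with all periods zero on $\Omega$ is exact, so $\omega_{\tilde u}=dv$ for some $v\in C^\infty(\Omega)$, unique up to an additive constant. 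Writing $\omega_{\tilde u}=dv$ coordinatewise gives precisely the Cauchy--Riemann equations $\partial_{x_1}\tilde u=\partial_{x_2}v$ and $\partial_{x_2}\tilde u=-\partial_{x_1}v$, so $f:=\tilde u+iv$ is holomorphic on $\Omega$ with $\Re f=\tilde u$. Rearranging yields $u=\Re f+\sum_{1\le j\le r}c_j\log|\cdot-a_j|$ on $\Omega$, which is the claim.

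The only ingredient beyond routine computation is the topological one, namely that $H^1(\Omega)\cong\R^r$ is spanned by the loops around the holes and that the period matrix pairing them with the forms $\omega_{g_j}$ can be normalized to the identity (it suffices that it be invertible, which already lets one solve uniquely for the $c_j$). Everything else --- closedness of the conjugate differential, exactness once the periods vanish, and the Cauchy--Riemann identification --- is elementary. One could alternatively bypass explicit homology by a gluing argument: on small disks $u$ is locally the real part of a holomorphic function, the failure of these local primitives to match up is measured by exactly the $r$ periods above, and the log-terms are introduced precisely to kill them; this is the approach of \cite{Axler}, from which we simply quote the statement.
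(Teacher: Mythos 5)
Your proof is correct and is essentially the same argument as the paper's, which defers to \cite{Axler} and records the explicit formulas: your period constants $c_j=\frac{1}{2\pi}\oint_{\gamma_j}\omega_u$ agree with the paper's $\frac{1}{2\pi i}\int_{\gamma_j}h(w)\,dw$ for $h=u_x-iu_y$, since $\oint_{\gamma_j}(u_x\,dx+u_y\,dy)=\oint_{\gamma_j}du=0$ leaves only the imaginary part, and your exactness step produces the same $f$ as the paper's line integral of $h-\sum_j c_j/(w-a_j)$.
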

We refer to \cite{Axler}.  for the proof. Let us only highlight that the holomorphic function $f$ and  the real numbers  $(c_j )_{1 \leq j \leq r}$ are given explicitely. 
Indeed,  for $1 \leq j \leq r$, 
$$c_j := \frac{ 1 }{ 2\pi i}  \int_{\gamma_j } h(w) \, dw,$$
where   $h := u_x -i u_y$,  $\gamma_j $ a smooth curve with winding number $\delta_{ij}$ around the hole $H_j$, 
and $f$ is given 
 for any $z$ in $\Omega$, by
$$f(z) := \int_{\mathcal C(b,z)} ( h(w) - \sum_j \frac{ c_j }{ w-a_j}  )\, dw,   $$
where $b$ in $\Omega$ is arbitrarily fixed, as well as  the smooth  curve $\mathcal C(b,z)$ between $b$ and $z$. 

\begin{proof}[Proof of  Theorem \ref{th:approxharm1} in the case of a finitely path-connected domain of $\R^2$]
Let $u$ which satisfies $\Delta u =0$ in $\Omega$. 
Then, by Lemma \ref{Axler}, there are a holomorphic function $f$ and  some real numbers  $(c_j )_{1 \leq j \leq r}$ such that 
 $u$ satisfies  \eqref{log-f}. 
Then by the Runge theorem, for any $\eps >0$, 
there is a rational function $\overline f$ with poles in the $(a_j )_{1 \leq j \leq r}$ such that 
  $\|f - \overline f  \|_{C^k}  \leq \eps$. 
  Next, set 
  $$\overline u = \Re \overline f + \sum_{1 \leq j \leq r} c_j \log | \cdot - a_j  |. $$
 Then $\Delta \overline u $ is a combination of derivatives of Dirac masses at the $(a_j )_{1 \leq j \leq r}$ and   $\|u - \overline u  \|_{C^k}  \leq  \|f - \overline f  \|_{C^k}  \leq \eps$.
\end{proof}

%%%%%%%%%%%%%%%%%%%%%%%%%%%%%%%%%%%%%%%%%%%%%%%%%
%%%%%%%%%%%%%%%%%%%%%%%%%%%%%%%%%%%%%%%%%%%%%%%%%
%%%%%%%%%%%%%%%%%%%%%%%%%%%%%%%%%%%%%%%%%%%%%%%%%
%%%%%%%%%%%%%%%%%%%%%%%%%%%%%%%%%%%%%%%%%%%%%%%%%

\bigskip \ \par \noindent {\bf Acknowledgements.} F. S. was partially supported by the Agence Nationale de la Recherche, Project SINGFLOWS, grant ANR-18-CE40-0027-01, 
Project MOSICOF, grant  ANR-21-CE40-0004, Project BOURGEONS, grant ANR-23-CE40-0014-01, and Institut Universitaire de France.

\end{document}